\title{The category of reduced orbifolds in local charts}
\author[Anke Pohl]{Anke D.\@ Pohl}
\address{Mathematisches Institut, Georg-August-Universit\"at G\"ottingen,  Bunsenstr. 3-5, 37073 G\"ottingen}
\email{anke.pohl@math.uni-goettingen.de}
\keywords{reduced orbifolds, orbifold maps, groupoids, groupoid homomorphisms}
\subjclass[2010]{Primary: 57R18, 22A22, Secondary: 58H05}
\newcommand{\apref}[3]{{#1\ref{#2}#3}}
\theoremstyle{plain}
\newtheorem{prop}{Proposition}[section]
\newtheorem{lemma}[prop]{Lemma}
\newtheorem{lemmadefi}[prop]{Lemma and Definition}
\newtheorem{thm}[prop]{Theorem}
\theoremstyle{definition}
\newtheorem{defi}[prop]{Definition}
\newtheorem{defirem}[prop]{Definition and Remark}
\newtheorem{example}[prop]{Example}
\newtheorem{construction}[prop]{Construction}
\newtheorem{specialcase}[prop]{Special Case}
\theoremstyle{remark}
\newtheorem{remark}[prop]{Remark}
\newcommand{\co}{\colon}
\DeclareMathOperator{\Germ}{Germ}
\DeclareMathOperator{\germ}{germ}
\DeclareMathOperator{\dom}{dom}
\DeclareMathOperator{\cod}{cod}
\DeclareMathOperator{\Diff}{Diff}
\DeclareMathOperator{\Orbmap}{Orb}
\newcommand{\pullback}[2]{\,{}_{#1}\!\!\times_{#2}}
\DeclareMathOperator{\Agr}{Agr}
\DeclareMathOperator{\Pgr}{Pgr}
\DeclareMathOperator{\Hom}{Hom}
\DeclareMathOperator{\Morph}{Morph}
\DeclareMathOperator{\pr}{pr}
\newcommand\N{\mathbb{N}}
\newcommand\R{\mathbb{R}}
\newcommand\C{\mathbb{C}}
\newcommand{\mc}[1]{\mathcal #1}
\newcommand{\wt}{\widetilde}
\newcommand{\eps}{\varepsilon}
\DeclareMathOperator{\id}{id}
\newcommand{\sceq}{\mathrel{\mathop:}=}
\newcommand{\seqc}{\mathrel{=\mkern-4.5mu{\mathop:}}}
\newcommand\ie{\mbox{i.\,e., }}
\newcommand\eg{\mbox{e.\,g., }}
\newcommand\wrt{\mbox{w.\,r.\,t.\@ }}
\newcommand\resp{\mbox{resp.\@}}
\begin{document}

\begin{abstract}
It is well-known that reduced smooth orbifolds and proper effective foliation Lie groupoids form equivalent categories. However, for certain recent lines of research, equivalence of categories is not sufficient. We propose a notion of maps between reduced smooth orbifolds and a definition of a category in terms of marked proper effective \'etale Lie groupoids such that the arising category of orbifolds is isomorphic (not only equivalent) to this groupoid category.
\end{abstract}

\maketitle


\section{Introduction}

Given a reduced orbifold (in local charts) and an orbifold atlas representing its orbifold structure it is well known how to construct (in an explicit way) a proper effective foliation groupoid (orbifold groupoid) from these data (see, \eg Haefliger \cite{Haefliger_orbifold} or the book by Moerdijk and Mr\v{c}un \cite{Moerdijk_Mrcun}). Over the years various authors (in particular, Moerdijk \cite{Moerdijk_survey}, Pronk~\cite{Pronk}) used this link to provide a definition of a category of orbifolds by proposing a definition of a category of orbifold groupoids, either as a 2-category or as a bicategory of fractions. Lerman \cite{Lerman_survey} provides a very good discussion of these approaches. These approaches have in common that the morphisms in the orbifold category are only given implicitly. 

While the notion of isomorphisms between orbifolds is clear, that of more general maps allows variations. Various propositions for orbifold maps in local charts have been made serving different purposes, \eg by Borzellino and Brunsden \cite{Borzellino_Brunsden} or the Chen-Ruan good maps \cite{Chen_Ruan}. Unfortunately, neither of these definitions model the morphisms coming from the groupoid category. We remark that Lupercio and Uribe \cite{Lupercio_Uribe}, even though widely believed, do not prove that Chen-Ruan good maps correspond to groupoid homomorphisms (a counterexample to the characterization of groupoid homomorphisms via good maps and thus to \cite[Proposition 5.1.7]{Lupercio_Uribe} is provided by the unique map between any orbifold with at least two charts in any orbifold atlas and the 0-dimensional connected reduced orbifold).

All these proposed groupoid categories are just equivalent, not isomorphic, to the orbifold category. This is caused by the fact that the construction mentioned above assigns the same groupoid to different (but isomorphic) orbifolds,
 and conversely various (Morita equivalent) groupoids to the same orbifold. Moerdijk and Pronk \cite{Moerdijk_Pronk} show that isomorphism classes of orbifolds correspond to Morita equivalence classes of orbifold groupoids. For many investigations about orbifolds, an equivalence of categories suffices to translate the problem to groupoids. However, one cannot investigate e.g.\@ the diffeomorphism group of an orbifold using any of the groupoid categories.

In this article we provide a correct characterization of groupoid homomorphisms in local charts. We use the arising maps to define a geometrically motivated notion of orbifold maps as certain equivalence classes. This allows us to define a geometrically natural orbifold category (with orbifolds as objects). Characterizing these orbifold maps in terms of groupoid homomorphisms enables us to define a category in terms of marked proper effective \'etale Lie groupoids (which is not the classical one) which is isomorphic to the orbifold category.

The results in this article are used by Schmeding \cite{Schmeding} to show that the diffeomorphism group of a paracompact reduced orbifold can be endowed with the structure of an (infinite-dimensional) Lie group. Moreover,  this Lie group is even $C^k$-regular for any $k\in \N_0\cup\{\infty\}$ and \textit{a fortiori} regular in the sense of Milnor; we refer to \cite{Schmeding} for details.

For convenience we briefly comment on the outline of the paper. We start by reviewing the necessary background material on orbifolds, groupoids, pseudogroups, and the well-known construction of a groupoid from an orbifold and an orbifold atlas representing its orbifold structure. Groupoids which arise in this way will be called \textit{atlas groupoids}.  To overcome the problem that different orbifolds are identified with the same atlas groupoid we introduce, in Section~\ref{sec_marked}, a certain marking of atlas groupoids. It consists in attaching to an atlas groupoid a certain topological space and a certain homomorphism between its orbit space and the topological space. The general concept of marking already appeared in \cite{Moerdijk_survey}. There, however, the relation between a marking of a groupoid and an orbifold atlas (in local charts) is not discussed. The specific marking of an atlas groupoid introduced here allows us to recover the orbifold. There is a natural notion of homomorphisms between 
marked atlas groupoids. In Section~\ref{sec_homom} we 
characterize these homomorphisms in local charts. On the orbifold side, this characterization involves the choice of representatives of the orbifold structures, namely those orbifold atlases which were used to construct the marked atlas groupoids. Hence, at this point we get a notion of orbifold map with fixed representatives of orbifold structures, which we will call \textit{charted orbifold maps}. In Section~\ref{sec_redorbcat} we introduce a natural definition of composition of charted orbifold maps and a geometrically motivated definition of the identity morphism (a certain class of charted orbifold maps), which allows us to establish a natural equivalence relation on the class of charted orbifold maps. An orbifold map (which does not depend on the choice of orbifold atlases) is then an equivalence class of charted orbifold maps. The leading idea for this equivalence relation is geometric: we consider charted orbifold maps as equivalent if and only if they induce the same charted orbifold map on common 
refinements of the orbifold atlases. Moreover, using the same idea, we define the composition of orbifold maps. In this way, we construct a category of reduced orbifolds. Finally, in Section~\ref{atlascategory}, we characterize orbifolds as certain equivalence classes of marked atlas groupoids, and orbifold maps as equivalence classes of homomorphisms of marked atlas groupoids. These equivalence relations are natural adaptations of the classical Morita equivalence. In this way, there arises a category of marked atlas groupoids which is isomorphic to the orbifold category. As an additional benefit the isomorphism functor is constructive. As a final step, in Section~\ref{extension}, we show that the category of marked atlas groupoids is isomorphic to an analogously defined category of marked proper effective \'etale Lie groupoids (by embedding).

Even though the category of marked proper effective \'etale Lie groupoids constructed in this article not the classical one, all the classical groupoid (2- or bi-) categories can be recovered by considering elements in an equivalence class as stand-alone entities and the equivalence-providing maps as 2-morphisms, and by forgetting the marking. The ultimate orbifold category constructed here is the one which is satisfying from a geometric point of view (c.f.\@ the results in \cite{Schmeding}) and also allows to adapt without any difficulties all results on groupoids that are invariant under Morita equivalence (e.g.\@ the behavior of orbifold vector bundles under pullback).

We expect that all these constructions can be performed in a similar way for orbifolds over other fields, \eg complex ones, and for orbifolds with additional structures such as Riemannian ones.\\

\textit{Acknowledgments:}
This paper emerged from a workshop on orbifolds in 2007 which took place in Paderborn in the framework of the International Research Training Group 1133 ``Geometry and Analysis of Symmetries''. The author is very grateful to the participants of this workshop for their abiding interest and enlightening conversations. In particular, she likes to thank Joachim Hilgert, Alexander Schmeding and an anonymous referee for very careful reading of the manuscript. Moreover, she wishes to thank Dieter Mayer and the Institut f\"ur Theoretische Physik in Clausthal for the warm hospitality where part of this work was conducted. The author was partially supported by the International Research Training Group 1133 ``Geometry and Analysis of Symmetries'', the Sonderforschungsbereich/Transregio 45 ``Periods, moduli spaces and arithmetic of algebraic varieties'', the Max-Planck-Institut f\"ur Mathematik in Bonn, the SNF (200021-127145) and the Volkswagen Foundation.\\

\textit{Notation and conventions:} 
We use $\N_0 = \N  \cup \{0\}$ to denote the set of non-negative integers. If
not stated otherwise, every manifold is assumed to be real, paracompact, Hausdorff
and smooth ($C^\infty$). We also consider second-countable manifolds, and it will always be indicated whether we require a given manifold to be just paracompact or even second-countable. If $M$ is a manifold, then
$\Diff(M)$ denotes the group of diffeomorphisms of $M$. If $G$ is a subgroup of
$\Diff(M)$, then 
\[
 G\backslash M \sceq \{ Gm \mid m\in M\}
\]
denotes the space of $G$-orbits endowed with the final topology. If $A_1, A_2, B$ are sets (manifolds) and $f_1\co A_1\to B$, $f_2\co A_2\to B$ are maps (submersions), then we denote the fibered product of $f_1$ and $f_2$ by $A_1\pullback{f_1}{f_2}A_2$ and identify it with the set (manifold)
\[
 A_1\pullback{f_1}{f_2}A_2 = \{ (a_1,a_2) \in A_1\times A_2 \mid f_1(a_1) = f_2(a_2)\}.
\]
Finally, we say that a family $\mc V = \{ V_i \mid i\in I\}$ is \textit{indexed by $I$} if $I \to \mc V$, $i\mapsto V_i$, is a bijection.

\section{Reduced orbifolds, groupoids, and pseudogroups}\label{sec_redorb}

In this section we recall the necessary background on reduced orbifolds and groupoids. The notion of (reduced) orbifolds goes back to at least Satake \cite{Satake1, Satake2}, and has been refined since then, see \eg \cite{Haefliger_orbifold, Thurston, Moerdijk_Mrcun, Adem_Leida_Ruan}. Here, we use one of these more modern definitions of reduced orbifolds, and we consider different flavors for the manifolds involved. We recall that throughout any manifold is required to be real, smooth, Hausdorff and paracompact.  

\subsection{Reduced orbifolds}

Let $Q$ be a topological space, and $n\in \N_0$. A \textit{reduced orbifold chart} of
dimension $n$ on $Q$ is a triple $(V,G,\varphi)$ where $V$ is a
connected $n$--manifold without boundary, $G$ is a finite subgroup of $\Diff(V)$, and
$\varphi\co V\to Q$ is a map with open image $\varphi(V)$ that induces a
homeomorphism from $G\backslash V$ to $\varphi(V)$. In this case, $(V,G,\varphi)$ is said to
\textit{uniformize} $\varphi(V)$.  

One might argue that orbifold charts should better be called orbifold parametrizations. However, orbifold charts is the more established notion and we prefer to stick to it. The orbifold charts are called reduced (or effective) to indicate that the action of $G$ is effective.

Two reduced orbifold charts $(V,G,\varphi)$, $(W,H,\psi)$
on $Q$ are called \textit{compatible} if for each pair $(x,y)\in V\times W$ with
$\varphi(x) = \psi(y)$ there are open connected neighborhoods $\widetilde V$ of
$x$ and $\widetilde W$ of $y$ and a diffeomorphism $h\co \widetilde V\to
\widetilde W$ with $\psi\circ h = \varphi\vert_{\widetilde V}$. The map $h$
is called a \textit{change of charts}.  The neighborhoods $\wt V$ and $\wt W$ and the diffeomorphism $h$ can always be chosen in such a way that $h(x)=y$. Moreover $\widetilde V$ may assumed to be open
$G$-stable. This means that $\wt V$ is open and connected, and for each $g\in G$ we either have $g\wt V = \wt V$ or $g\wt V\cap \wt V = \emptyset$. In this case, $\widetilde W$ is open $H$-stable by \cite[Proposition~2.12(i)]{Moerdijk_Mrcun}.

A \textit{reduced orbifold atlas} of dimension $n$ on $Q$ is a collection of pairwise compatible reduced  orbifold charts 
\[ \mathcal V \sceq \{ (V_i, G_i, \varphi_i) \mid i\in I\} \]
of dimension $n$ on $Q$ such that $\bigcup_{i\in I} \varphi_i(V_i) = Q$. Two reduced  orbifold atlases are \textit{equivalent} if their union is a reduced orbifold atlas. A \textit{reduced orbifold structure} of dimension $n$ on $Q$ is an equivalence class of reduced orbifold atlases of dimension $n$ on $Q$. A \textit{reduced (paracompact resp.\@ second-countable) orbifold} of dimension $n$ is a pair $(Q,\mathcal U)$ where $Q$ is a (paracompact resp.\@ second-countable) Hausdorff space and $\mathcal U$ is a reduced  orbifold structure of dimension $n$ on $Q$. We note that since the manifolds $V$ in the orbifold charts $(V,G,\varphi)$ are assumed to be connected and finite-dimensional, paracompactness of $V$ is equivalent to second-countability of $V$. However, since the topological space $Q$ is not required to be connected, paracompactness does not necessarily imply second-countability of $Q$.

Let $\mc U$ be a reduced orbifold structure on $Q$. Each reduced orbifold atlas $\mc V$ in $\mc U$ is called a \textit{representative} of $\mc U$ or a \textit{reduced orbifold atlas of} $(Q,\mc U)$.

Since we are considering reduced orbifolds only, we omit the term ``reduced'' from now on. Moreover, when implicitly understood we will omit the terms ``paracompact'' and ``second-countable''.

Let $(V,G,\varphi)$, $(W,H,\psi)$ be orbifold charts of identical dimension on the topological space $Q$. Then an \textit{(open) embedding} $\mu\co (V,G,\varphi) \to (W,H,\psi)$ between these two orbifold charts is an (open) embedding $\mu\co V\to W$ between manifolds which satisfies $\psi\circ\mu = \varphi$. We remark that any embedding is automatically open by the Theorem on the Invariance of Domain.
If, in addition, $\mu$ is a diffeomorphism between $V$ and $W$, then $\mu$ is called an \textit{isomorphism} from $(V,G,\varphi)$ to $(W,H,\psi)$. Suppose that $S$ is an open $G$--stable subset of $V$ and let $G_S \sceq \{g\in G\mid gS=S\}$ denote the \textit{isotropy group} of $S$. Then $(S,G_S,\varphi\vert_S)$ is an orbifold chart on $Q$ as well, the \textit{restriction} of $(V,G,\varphi)$ to $S$.

\begin{remark}\label{groupiso}
Suppose that $\mu\co (V,G,\varphi) \to (W,H,\psi)$ is an embedding. By \cite[Proposition~2.12(i)]{Moerdijk_Mrcun},  $\mu(V)$ is an open $H$--stable subset of $W$,  and moreover that there is a unique group isomorphism $\overline\mu \co G \to H_{\mu(V)}$ for which $\mu(gx) = \overline\mu(g)\mu(x)$ for $g\in G$, $x\in V$.
\end{remark}

In the following example we provide two orbifolds with the same underlying
topological space. These orbifolds are particularly simple since both orbifold
structures have one-chart-representatives. Despite their simplicity they serve
as motivating examples for several definitions in this paper.

\begin{example}\label{notcompatible}
Let $Q\sceq [0,1)$ be endowed with the induced topology of $\R$. The map
\[
f\colon Q \to Q,\quad x  \mapsto  x^2
\]
is a homeomorphism. Further the map $\pr\co (-1,1) \to [0,1)$, $x\mapsto |x|$, induces a homeomorphism ${\{\pm\id \}}\backslash (-1,1) \to Q$. Then 
\[ V_1\sceq \big( (-1,1), \{\pm \id\}, \pr \big) \quad\text{and}\quad V_2\sceq \big( (-1,1), \{\pm\id\}, f\circ\pr\big) \]
are two orbifold charts on $Q$. They are easily seen to be non-compatible. 

Let $\mathcal U_1$ be the orbifold structure on $Q$ generated by $V_1$, and $\mathcal U_2$ be the one generated by $V_2$.
\end{example}

\subsection{Groupoids and homomorphisms}

A groupoid is 
a small category in which each morphism is an isomorphism. In the context of orbifolds this concept is most commonly expressed (equivalently) in terms of sets and maps.  The morphisms are then called arrows. 

A \textit{groupoid} $G$ is a
tuple $G=(G_0,G_1,s,t,m,u,i)$ consisting of the set $G_0$ of \textit{objects}, or the \textit{base} of $G$, the set $G_1$ of \textit{arrows}, and five \textit{structure maps}, namely the  \textit{source map} $s\co G_1\to G_0$, the \textit{target map} $t\co G_1\to G_0$,  the \textit{multiplication} or \textit{composition} $m\co G_1\pullback{s}{t} G_1\to G_1$, the \textit{unit map} $u\co G_0\to G_1$, and the \textit{inversion} $i\co G_1\to G_1$ which satisfy that
\begin{enumerate}[(i)]
\item\label{groupoidi} for all $(g,f)\in G_1\pullback{s}{t} G_1$ we have $s(m(g,f)) = s(f)$ and $t(m(g,f)) = t(g)$,
\item for all $(h,g),(g,f)\in G_1\pullback{s}{t} G_1$ we have $m( h, m(g,f)) = m(m(h,g),f)$,
\item for all $x\in G_0$ we have $s(u(x)) = x = t(u(x))$,
\item for all $x\in G_0$ and all $(u(x),f), (g,u(x))\in G_1\pullback{s}{t} G_1$
it follows $m(u(x),f) = f$ and $m(g,u(x)) = g$,
\item for all $g\in G_1$ we have $s(i(g)) = t(g)$ and $t(i(g)) = s(g)$,  and 
$m(g,i(g)) = u(t(g))$ and $m(i(g),g)= u(s(g))$.
\end{enumerate}
We often use the notations $m(g,f) = gf$, $u(x) = 1_x$, $i(g) = g^{-1}$, and
$g\co x\to y$ or $\stackrel{g}{x\to y}$ for an arrow $g\in G_1$ with
$s(g)=x$, $t(g)=y$. Moreover, $G(x,y)$ denotes the set of arrows from $x$ to
$y$.

Classically, the base space of a Lie groupoid is required to be a second-countable manifold. However, parts of the theory of Lie groupoids stay valid if the base manifold is just paracompact. Therefore, as with orbifolds, we consider two flavors of Lie groupoids here.

A \textit{(paracompact resp.\@ second-countable) Lie groupoid} is a groupoid $G$ for which $G_0$ is a 
paracompact resp.\@ second-countable manifold, $G_1$ is a smooth (possibly non-Hausdorff, possible non-second-countable)
manifold, the structure maps $s,t\co G_1\to G_0$ are smooth
submersions (hence $G_1\pullback{s}{t} G_1$, the domain of $m$, is a
smooth, possibly non-Haus\-dorff manifold),  and the structure maps $m,u$ and $i$ are smooth. 
A Lie groupoid is called \textit{\'etale} if its source and target map are local diffeomorphisms. It is called \textit{proper} if $G_1$ is Hausdorff and the map $(s,t)\colon G_1 \to G_0\times G_0$ is proper. An \'etale Lie groupoid  $G$ is called \textit{effective} if the map 
\[
 g \mapsto \germ_{s(g)}(t\circ (s\vert_U)^{-1}),
\]
where $g\in G_1$ and $U$ is an open neighborhood of $g$ in $G_1$ such that $t\vert_U$ and $s\vert_U$ are injective, is injective.

Let $G$ and $H$ be groupoids.
A \textit{homomorphism} from $G$ to $H$ is a functor $\varphi\co G\to H$,
\ie it is a tuple $\varphi=(\varphi_0,\varphi_1)$ of maps $\varphi_0\co G_0\to H_0$ and $\varphi_1\co G_1\to H_1$
which commute with all structure maps. If $G$ and $H$ are Lie groupoids, then $\varphi$ is a homomorphism between them if it is a homomorphism of the abstract groupoids with the additional requirement that  $\varphi_0$ and $\varphi_1$ be smooth maps.

Let $G$ be a
groupoid. The \textit{orbit} of $x\in G_0$ is the set 
\[ Gx \sceq t(s^{-1}(x)) = \left\{ y\in G_0\left\vert\ \exists\, g\in G_1\co
x\stackrel{g}{\rightarrow} y\right.\right\}.\]
Two elements $x,y\in G_0$ are called \textit{equivalent}, $x\sim y$, if
they are in the same orbit. The quotient space $|G|\sceq G_0/_\sim$ is called the
\textit{orbit space} of $G$.  The canonical quotient map
$G_0 \to |G|$ is denoted by $\pr$ or $\pr_G$, and  $[x] \sceq \pr(x)$ for $x\in G_0$.

\subsection{Pseudogroups and groupoids}

We recall how to construct a Lie groupoid from an orbifold and a representative of its orbifold structure. This construction is well known, see e.g.\@ the book by Moerdjik and Mr\v{c}un \cite{Moerdijk_Mrcun}. We provide it here for the convenience of the reader and to introduce the notations we will use later on. It is a two-step process in which one first assigns a pseudogroup to the orbifold, which depends on the representative of the orbifold structure. Then one constructs an \'{e}tale Lie groupoid from the pseudogroup. For reasons of generality and clarity we start with the second step.

\begin{defi}\label{def_pseudogroup}
Let $M$ be a manifold. A \textit{transition} on $M$ is a
diffeomorphism $f\co U\to V$ where $U,V$ are open subsets of $M$. In particular, the empty map
$\emptyset\to\emptyset$ is a transition on $M$. The \textit{product} of two
transitions $f\co U\to V$, $g\co U'\to V'$ is the transition
\[ f\circ g\co g^{-1}(U\cap V') \to f(U\cap V'),\ x\mapsto f(g(x)).\]
The \textit{inverse} of $f$ is the inverse of $f$ as a function. If $f\co U\to V$ is a transition, we use $\dom f$ to denote its
\textit{domain} and $\cod f$ to denote its \textit{codomain}. Further, if $x\in \dom f$, then $\germ_x f$ denotes the
germ of $f$ at $x$.

Let $\mathcal A(M)$ be the set of all transitions on $M$. A \textit{pseudogroup}
on $M$ is a subset $P$ of $\mathcal A(M)$ which is closed under multiplication
and inversion. 
A pseudogroup $P$ is called \textit{full} if $\id_U\in P$ for each open subset
$U$ of $M$. It is said to be \textit{complete} if it is full and satisfies
the following  gluing property: Whenever there is a transition $f\in \mathcal
A(M)$ and an open covering $(U_i)_{i\in I}$ of $\dom f$ such that
$f\vert_{U_i}\in P$ for all $i\in I$, then $f\in P$.
\end{defi}

We now recall how to construct an \'etale Lie groupoid from a full pseudogroup.

\begin{construction}\label{constr_groupoid} Let $M$ be a manifold and $P$ a full pseudo\-gr\-oup on $M$. 
The \textit{associated groupoid} $\Gamma\sceq \Gamma(P)$ is given by
\[ \Gamma_0 \sceq M,\quad \Gamma_1\sceq \{ \germ_x f\mid f\in P,\ x\in\dom f\},\]
and
\[ \Gamma(x,y) \sceq \{ \germ_x f \mid f\in P,\ x\in\dom f,\ f(x) = y\}.\]
For $f\in P$ define $U_f \sceq \left\{ \germ_x f\left\vert\ x\in\dom f \vphantom{\germ_x f}\right.\right\}$.
The topology and differential structure of $\Gamma_1$ is given by the germ topology and germ differential structure, that is, for each $f\in P$ the bijection 
\[ \varphi_f\co
\left\{
\begin{array}{ccc}
U_f  & \to & \dom f
\\
\germ_x f & \mapsto & x
\end{array}
\right.
\]
is required to be a diffeomorphism. The structure maps $(s,t,m,u,i)$ of $\Gamma$ are the obvious ones, namely
\begin{align*}
s(\germ_x f) & \sceq x
\\
t(\germ_x f) & \sceq f(x)
\\
m(\germ_{f(x)} g, \germ_x f) & \sceq \germ_x(g\circ f)
\\
u(x) & \sceq \germ_x\id_U 
\text{ for an open neighborhood $U$ of $x$}
\\
i(\germ_x f) & \sceq \germ_{f(x)} f^{-1}.
\end{align*}
Obviously, $\Gamma(P)$ is an \'etale Lie groupoid.
\end{construction}

\begin{specialcase}\label{atlasgroupoid}
Let $(Q,\mathcal U)$ be an orbifold, and let 
\[
\mathcal V = \{ (V_i, G_i, \pi_i) \mid i\in I \}
\]
be a representative of $\mc U$ indexed by $I$. 
We define
\[ V\sceq \coprod_{i\in I} V_i\quad\text{and}\quad \pi\sceq \coprod_{i\in I}\pi_i.\]
Then
\[ \Psi(\mathcal V) \sceq \big\{ \text{$f$ transition on $V$} \ \big\vert\  \pi\circ f = \pi\vert_{\dom f} \big\}.\]
is a complete pseudogroup on $V$. The associated groupoid 
\[
\Gamma(\mathcal V)\sceq \Gamma(\Psi(\mathcal V))
\]
is the proper effective \'etale Lie groupoid we shall associate to $Q$ and $\mathcal V$. Note that this groupoid depends on the choice of the representative of the orbifold structure $\mc U$ of $Q$. A groupoid which arises in this way we call \textit{atlas groupoid}.

If $(Q,\mc U)$ is a second-countable orbifold, then we can choose $\mc V$ to be countable and the associated atlas groupoid has a second-countable base. Whenever dealing with second-countable orbifolds in the following, we will assume that the chosen representative of the orbifold structure is countable.
\end{specialcase}

\begin{example}\label{orbexample}
Recall the orbifolds $(Q,\mathcal U_i)$ $(i=1,2)$ from Example~\ref{notcompatible}, and consider the representative $\mathcal V_i \sceq \{V_i\}$ of $\mathcal U_i$.  Proposition~2.12 in \cite{Moerdijk_Mrcun} implies that 
\[ \Psi(\mathcal V_i) = \big\{ g\vert_U\co U \to g(U) \ \big\vert\ \text{$U\subseteq (-1,1)$ open, $g\in \{\pm \id\}$}   \big\}. \]
In both cases the associated groupoid $\Gamma\sceq \Gamma(\mathcal V_i)$ is 
\begin{align*}
\Gamma_0 & = (-1,1)
\\
\Gamma(x,y) & =
\begin{cases}
\big\{ \germ_0 \id, \germ_0 (-\id) \big\} & \text{if $x=0=y$}
\\
\big\{ \germ_x \id \big\} & \text{if $x=y\not=0$}
\\
\big\{ \germ_x (-\id) \big\} & \text{if $x=-y\not=0$}
\\
\emptyset & \text{otherwise.}
\end{cases}
\end{align*}

\end{example}

\section{Marked Lie groupoids and their homomorphisms}\label{sec_marked}

In Example~\ref{orbexample} we have seen that it may happen that the same atlas
groupoid is associated to two different orbifolds. The reason for this is that
in the definition of the pseudogroup which is needed for the construction of the atlas groupoid one loses information about the projection
maps $\varphi$ of the orbifold charts $(V,G,\varphi)$. To be able to distinguish atlas
groupoids constructed from different orbifolds, we mark the groupoids with a
topological space and a homeomorphism. It will turn out that this marking
suffices to identify the orbifold one started with.

A \textit{marked Lie groupoid} is a triple $(G, \alpha , X)$ consisting of a
Lie groupoid $G$, a topological space $X$, and a homeomorphism $\alpha\co|G|
\to X$. 

For atlas groupoids there exists a specific marking which is crucial for the isomorphism between the orbifold category and the groupoid category. It is stated in the following lemma of which we omit its straightforward proof.

\begin{lemma}\label{atlashomeom}
Let $(Q,\mathcal U)$ be an orbifold and 
\[
\mathcal V = \{ (V_i,G_i,\pi_i) \mid i\in I\}
\]
a representative of $\mc U$ indexed by $I$. Set $V\sceq \coprod_{i\in I} V_i$ and $\pi\sceq \coprod_{i\in I}
\pi_i \co V\to Q$. Then the map
\[ 
\alpha \co 
\left\{ 
\begin{array}{ccc}
|\Gamma(\mathcal V)| & \to & Q
\\{}
[x] & \mapsto & \pi(x)
\end{array}
\right.
\]
is a homeomorphism.
\end{lemma}

Let $(Q,\mathcal U)$ be an orbifold. To each (countable if $(Q,\mc U)$ is second-countable) orbifold atlas $\mathcal V$ of $Q$
we assign the marked atlas groupoid $(\Gamma(\mathcal V), \alpha_{\mathcal V}, Q)$
 with $\alpha_{\mathcal V}$ being the homeomorphism from
Lemma~\ref{atlashomeom}. We often only write $\Gamma(\mathcal V)$ to refer
to this marked groupoid.

\begin{example}
Recall from Example~\ref{orbexample} the orbifolds $(Q,\mathcal U_i)$ for $i=1,2$, their respective orbifold atlases $\mathcal V_i$, and the associated grou\-poids $\Gamma=\Gamma(\mathcal V_i)$. The orbit of $x\in \Gamma_0$ is 
$\{x,-x\}$. Hence the homeomorphism associated to $(Q,\mc U_i)$ is $\alpha_{\mathcal V_i}\co |\Gamma| \to Q$ given by $\alpha_{\mathcal V_1}([x]) =  |x|$ resp.\@ $\alpha_{\mathcal V_2}([x]) =   x^2$. Thus, the associated marked groupoids $(\Gamma, \alpha_{\mc V_1},Q)$ and
$(\Gamma,\alpha_{\mc V_2},Q)$ are different.
\end{example}

\begin{prop}\label{backorb}
Let $(Q,\mc U)$ and $(Q',\mc U')$ be orbifolds. Suppose that $\mc V$ is a
representative of $\mc U$, and $\mc V'$ a representative of $\mc U'$. If the
associated marked atlas groupoids $(\Gamma(\mc V), \alpha_{\mc V}, Q)$ and
$(\Gamma(\mc V'), \alpha_{\mc V'}, Q')$ are equal, then the orbifolds $(Q,\mc
U)$ and $(Q',\mc U')$ are equal. More precisely, we even have $\mc V = \mc V'$.
\end{prop}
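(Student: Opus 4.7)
The plan is to show that the marked atlas groupoid of an orbifold atlas determines that atlas. Concretely, I would recover $Q$, the underlying manifold $V$, the collection of charts, and each isotropy group $G_i$ directly from $(\Gamma(\mc V),\alpha_{\mc V},Q)$, so that equality of the marked atlas groupoids forces $\mc V = \mc V'$ (and in particular $(Q,\mc U) = (Q',\mc U')$).

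From equality of the markings one immediately has $Q = Q'$ and $\alpha \sceq \alpha_{\mc V} = \alpha_{\mc V'}$; equality of the underlying Lie groupoids forces the base manifolds to agree, so
\[
V \sceq \Gamma(\mc V)_0 \;=\; \coprod_{i\in I}V_i \;=\; \coprod_{j\in I'}V'_j \;=\; \Gamma(\mc V')_0
\]
as smooth manifolds, and by Proposition~\ref{atlashomeom} the common map $\pi \sceq \alpha\circ\pr\co V\to Q$ equals both $\coprod \pi_i$ and $\coprod \pi'_j$. Since each $V_i$ and each $V'_j$ is a connected open submanifold of $V$, and the $V_i$ (resp.\@ $V'_j$) are pairwise disjoint, both families coincide with the canonical decomposition of $V$ into connected components. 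This supplies a unique bijection $\sigma\co I\to I'$ with $V_i = V'_{\sigma(i)}$, and restricting $\pi = \pi'$ to each component yields $\pi_i = \pi'_{\sigma(i)}$.

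It remains to identify $G_i$ with $G'_{\sigma(i)}$, and for this I would prove that $G_i$ is intrinsically determined by the pair $(V_i,\pi_i)$ via
\[
G_i \;=\; \{\, g\in \Diff(V_i) \mid \pi_i\circ g = \pi_i \,\}.
\]
The inclusion ``$\subseteq$'' is immediate from the fact that $\pi_i$ induces a homeomorphism $G_i\backslash V_i\to \pi_i(V_i)$. The reverse inclusion is the main obstacle. Given such a diffeomorphism $g$, pick a principal point $x_0\in V_i$ (such points exist and are dense, since the non-principal locus is the finite union of the fixed-point sets of the non-identity elements of $G_i$, each a closed submanifold of positive codimension by effectiveness). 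Choose the unique $h\in G_i$ with $h(x_0) = g(x_0)$; local freeness of the action forces $g = h$ on a neighbourhood of $x_0$. This local equality is then propagated to all of $V_i$: on each connected component of the principal locus it holds by a standard clopen-set connectedness argument, and across non-principal points it is forced by smoothness of $g$ together with the rigidity statement that two elements of $G_i$ agreeing in value and first derivative at a single point must be equal---a consequence of Bochner linearization applied to the finite effective action of $G_i$ on the connected manifold $V_i$. Carrying this out carefully yields $g = h\in G_i$ globally, so $G_i = G'_{\sigma(i)}$ and $\mc V = \mc V'$.
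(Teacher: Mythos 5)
Your first half coincides with the paper's own proof: equality of the marked groupoids gives $Q=Q'$ and $\Gamma(\mc V)_0=\Gamma(\mc V')_0$, the charts are recovered as the connected components of the base (each $V_i$, $V'_j$ being connected), and the marking from Proposition~\ref{atlashomeom} forces $\pi_i=\pi'_{\sigma(i)}$. The only divergence is in the group-recovery step. The paper uses exactly the intrinsic characterization you aim at --- a diffeomorphism $g$ of $V_i$ with $\pi'_i\circ g=\pi'_i$ moves every point within its $G'_i$-orbit and hence equals a unique element of $G'_i$ --- but it gets this by citing \cite[Lemma~2.11]{Moerdijk_Mrcun}, whereas you re-prove that lemma from scratch (density of principal points, agreement with a group element near a principal point, Bochner-type rigidity). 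Your ingredients are the standard and correct ones, but note that what you compress into ``carrying this out carefully'' is precisely the nontrivial content of the cited lemma: once $g$ agrees with some element of $G_i$ on each connected component of the principal locus, you still have to glue these locally defined elements into a single global one. The clopen argument applies only to the (possibly disconnected) principal locus, not to $V_i$, and crossing the singular set needs either your first-derivative rigidity at common boundary points \emph{together with} an argument that the principal components chain up through such boundary points, or, more cleanly, an open-and-closed argument on the set of points near which $g$ coincides with some (then unique, hence locally constant) element of $G_i$. As written this step is a sketch; either complete it or simply invoke \cite[Lemma~2.11]{Moerdijk_Mrcun} as the paper does, which reduces your second half to two lines. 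A minor difference with no mathematical effect: the paper also records $\Psi(\mc V)=\Psi(\mc V')$, which your argument does not need since your characterization of $G_i$ uses only the pair $(V_i,\pi_i)$.
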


\begin{proof}
Clearly, $Q=Q'$. Suppose
that 
\[
 \mc V = \{ (V_i, G_i, \pi_i) \mid i\in I\} \quad\text{and}\quad \mc V' = \{
(V'_j, G'_j, \pi'_j) \mid j\in J\},
\]
indexed by $I$ resp.\@ by $J$. From $\Gamma(\mc V) = \Gamma(\mc V')$  it follows that 
\[
  \coprod_{i\in I} V_i = \Gamma(\mc V)_0 = \Gamma(\mc V')_0 = \coprod_{j\in J}
V'_j.
\]
Since each $V_i$ and each $V'_j$ is connected, there is a bijection between $I$ and $J$.
We may assume $I=J$ and $V_i = V'_i$
for all $i\in I$. Let $x\in V_i$. Then $\pi_i(x) = \alpha_{\mc V}([x]) = \alpha_{\mc V'}([x]) =
\pi'_i(x)$. Therefore $\pi_i=\pi'_i$ for all $i\in I$. Thus $\Psi(\mc V) = \Psi(\mc V')$. Moreover
\[
 G_i = \{ f\in \Psi(\mc V) \mid \dom f= V_i = \cod f\} = G'_i.
\]
To show that the actions $G_i$ and $G'_i$ on $V_i$ are equal, let $g\in G_i$. For each $x\in V_i$ we have $\pi'_i( g(x)) =
\pi'_i(x)$. This shows that $g(x) \in G'_ix$ for each $x\in V_i$. By
\cite[Lemma~2.11]{Moerdijk_Mrcun} there exists a unique element $g'\in G'_i$
such that $g=g'$. From this it follows that $G_i = G'_i$ as acting groups. Thus, $\mc V  = \mc V'$.
\end{proof}

\begin{lemma}
 Let $(G,\alpha, X)$ and $(H,\beta, Y)$ be marked Lie groupoids and suppose that $\varphi = (\varphi_0,\varphi_1)\co G\to H$  is a homomorphism of Lie groupoids. Then $\varphi$ induces a unique map $\psi$ such that the diagram
\[
\xymatrix{
G_0  \ar[r]^{\pr_G} \ar[d]_{\varphi_0}& |G| \ar[r]^\alpha & X\ar[d]^\psi
\\
H_0 \ar[r]^{\pr_H} & |H| \ar[r]^\beta & Y
}
\]
commutes. Moreover, $\psi$ is continuous.
\end{lemma}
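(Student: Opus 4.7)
The plan is to factor $\varphi_0$ through the orbit spaces, extend the resulting map to the markings, and then verify uniqueness and continuity separately.

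First I would show that $\varphi_0$ respects the equivalence relations defining the orbit spaces. If $x \sim y$ in $G_0$, there is an arrow $g \in G_1$ with $s(g)=x$ and $t(g)=y$; since $\varphi$ is a functor, $\varphi_1(g)$ is an arrow in $H_1$ from $\varphi_0(x)$ to $\varphi_0(y)$, so $\varphi_0(x) \sim \varphi_0(y)$ in $H_0$. By the universal property of the quotient set, $\pr_H \circ \varphi_0$ factors uniquely through $\pr_G$, yielding a well-defined map $|\varphi|\colon |G| \to |H|$ with $|\varphi| \circ \pr_G = \pr_H \circ \varphi_0$.

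Next, since $\alpha$ and $\beta$ are homeomorphisms, I would simply define
\[
 \psi \sceq \beta \circ |\varphi| \circ \alpha^{-1}.
\]
Chasing this around the diagram gives $\psi \circ \alpha \circ \pr_G = \beta \circ |\varphi| \circ \pr_G = \beta \circ \pr_H \circ \varphi_0$, so the diagram commutes. For uniqueness, any other map $\psi'$ making the diagram commute must agree with $\psi$ on the image of $\alpha \circ \pr_G$; but $\pr_G$ is surjective by construction and $\alpha$ is a homeomorphism, hence surjective, so $\psi' = \psi$ on all of $X$.

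Finally, continuity of $\psi$ reduces to continuity of $|\varphi|$, since $\alpha^{-1}$ and $\beta$ are continuous. The orbit space $|G|$ carries the quotient topology induced by $\pr_G$ (as recorded in the definition of the orbit space), so $|\varphi|$ is continuous if and only if $|\varphi| \circ \pr_G = \pr_H \circ \varphi_0$ is continuous as a map $G_0 \to |H|$. Smoothness of $\varphi_0$ gives continuity, and $\pr_H$ is continuous as a quotient map, so their composition is continuous, which finishes the argument. There is no real obstacle here: the only mild subtlety is remembering that the relevant topology on $|G|$ is the final topology with respect to $\pr_G$, which is exactly what makes the universal property available.
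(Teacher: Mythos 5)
Your argument is correct and is exactly the paper's route, just carried out in more detail: factor $\varphi_0$ through the orbit spaces to get $|\varphi|$, set $\psi = \beta\circ|\varphi|\circ\alpha^{-1}$, and derive uniqueness from surjectivity of $\alpha\circ\pr_G$ and continuity from the quotient topology on $|G|$. Nothing to add.
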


\begin{proof}
The map $\varphi$ induces a unique map $|\varphi|\co |G|\to |H|$ such that  $|\varphi|\circ \pr_G = \pr_H\circ \varphi_0$, which is continuous. Then $\psi =\beta\circ |\varphi| \circ \alpha^{-1}$.
\end{proof}

\section{Groupoid homomorphisms in local charts}\label{sec_homom}

In this section we characterize homomorphisms between marked atlas groupoids on the orbifold side, \ie in terms of local charts. We proceed in a two-step process. First we define a concept which we call representatives of orbifold maps. Each representative of an orbifold map gives rise to exactly one homomorphism between the associated marked atlas groupoids. Since, in general, each groupoid homomorphism corresponds to several such representatives, we then impose an equivalence relation on the class of all representatives for fixed orbifold atlases. The equivalence classes, called charted orbifold maps, turn out to be in bijection with the homomorphisms between the marked atlas groupoids. The constructions in this section are subject to a fixed choice of representatives of the orbifold structures. In the following sections we will use this construction as a basic building block for a notion of maps (or morphisms) between orbifolds which is independent of the chosen representatives.

Throughout this section let $(Q,\mathcal U)$, $(Q',\mathcal U')$ denote two orbifolds.

\begin{defi} Let $f\co Q\to Q'$ be a continuous map, and suppose that
$(V,G,\pi)\in\mathcal U$, $(V',G',\pi')\in\mathcal U'$ are  orbifold charts. A
\textit{local lift} of $f$ with respect to $(V,G,\pi)$ and $(V',G',\pi')$ is a smooth map 
$\tilde f\co V\to V'$ such that $\pi'\circ\tilde f = f\circ\pi$.
In this case, we call $\tilde f$ a \textit{local lift of $f$ at $q$} for each $q\in \pi(V)$.
\end{defi}

Recall the pseudogroup $\mathcal A(M)$ from Definition~\ref{def_pseudogroup}.

\begin{defi}\label{generatepsgr}
Let $M$ be a manifold and $A$ a pseudogroup on $M$ which satisfies the gluing property from Definition~\ref{def_pseudogroup} and is closed under restrictions. The latter means that if $f\in A$ and $U\subseteq \dom f$ is open, then the map $f\vert_U\co U \to f(U)$ is in $A$. Suppose that $B$ is a subset of $\mathcal A(M)$. Then $A$ is said to be \textit{generated} by $B$ if $B\subseteq A$ and for each $f\in A$ and each $x\in\dom f$ there exists some $g\in B$ with $x\in \dom g$ and an open set $U\subseteq \dom f \cap \dom g$ such that $x\in U$ and $f\vert_U = g\vert_U$.
In this case we say that $B$ \textit{generates} $A$.
\end{defi}

\begin{defi} Let $M$ be a manifold. A subset $P$ of $\mathcal A(M)$ is called a \textit{quasi-pseudogroup} on $M$ if it satisfies the following two properties:
\begin{enumerate}[(i)]
\item If $f\in P$ and $x\in \dom f$, then there exists an open set $U$ with $x\in U\subseteq \dom f$ and $g\in P$ such that there exists an open set $V$ with $f(x)\in V\subseteq \dom g$ and 
\[
 \big( f\vert_U\big)^{-1} = g\vert_V.
\]
\item If $f,g\in P$ and $x\in f^{-1}(\cod f \cap \dom g)$, then there exists $h\in P$ with $x\in \dom h$ such that we find an open set $U$ with 
\[
x\in U \subseteq f^{-1}(\cod f \cap \dom g) \cap \dom h\quad\text{and}\quad
 g\circ f\vert_U = h\vert_U.
\]
\end{enumerate}
\end{defi}

A quasi-pseudogroup is designed to work with the germs of its elements. Therefore identities (like inversion and composition) of elements in quasi-pseudogroups are only required to be satisfied locally, whereas for (ordinary) pseudogroups these identities have to be valid globally. One easily proves that each quasi-pseudogroup generates a unique pseudogroup which satisfies the gluing property from Definition~\ref{def_pseudogroup} and is closed under restrictions. Conversely, each generating set for such a pseudogroup is necessarily  a quasi-pseudogroup.

In the following definition of a representative of an orbifold map, the
underlying continuous map $f$ is the only entity which is stable under change of orbifold
atlases or, in other words, under the choice of local lifts. The pair $(P,\nu)$
should be considered as one entity. It serves as a transport of changes of
charts from one orbifold to another. Here we ask for a quasi-pseudogroup $P$ instead of working with all of
$\Psi(\mathcal V)$ (recall $\Psi(\mc V)$ from Special Case~\ref{atlasgroupoid}) for two reasons. In general, $P$ is much
smaller than $\Psi(\mathcal V)$. Sometimes it may even be finite. In 
Example~\ref{Pfinite} below we see that for some orbifolds, $P$ may
consist of only two elements. Moreover, if the orbifold is a connected manifold, $P$ can always be
chosen to be $\{\id\}$. The other reason is that it is much easier
to construct a quasi-pseudogroup $P$ and a compatible map $\nu$ from a given groupoid
homomorphism than a map $\nu$ defined on all of
$\Psi(\mathcal V)$.

Examples~\ref{notf} and \ref{Pfinite} below show that the objects requested in the following definition need not exist nor, if they exist, are uniquely determined. 

\begin{defi}\label{repchartedorbmap} A \textit{representative of an orbifold map} from $(Q,\mathcal U)$ to $(Q',\mathcal U')$ is a tuple 
\[ \hat f\sceq (f, \{\tilde f_i\}_{i\in I}, P, \nu )\]
where 
\begin{enumerate}[(R1)]
\item \label{f} $f\co Q\to Q'$ is a continuous map,
\item \label{liftings} for each $i\in I$, the map $\tilde f_i$ is a local lift of $f$ \wrt some orbifold charts $(V_i, G_i,\pi_i)\in\mathcal U$, $(V'_i, G'_i, \pi'_i)\in\mathcal U'$ such that
\[ \bigcup_{i\in I}\pi_i(V_i) = Q \]
and $(V_i, G_i, \pi_i) \not= (V_j, G_j, \pi_j)$ for $i,j\in I$, $i\not= j$,
\item \label{P} $P$ is a quasi-pseudogroup which consists of changes of charts
of the orbifold atlas
\[ \mathcal V \sceq \{ (V_i, G_i, \pi_i)\mid i\in I \}\]
of $(Q,\mc U)$ and generates $\Psi(\mathcal V)$.
\item \label{nu} Let $\psi\sceq \coprod_{i\in I} \tilde f_i$ and let $\mc V'$ be a representative of $\mc U'$ which contains
\[
 \{ (V'_i, G'_i, \pi'_i) \mid i\in I\}.
\]
Then $\nu\co P \to \Psi(\mc V')$ is a map which assigns
to each $\lambda\in P$ an embedding 
\[ \nu(\lambda) \co (W', H', \chi') \to (V', G', \varphi')\]
between some orbifold charts in $\mathcal V'$ such that
\begin{enumerate}[(a)]
\item \label{invariant} $\psi\circ\lambda = \nu(\lambda)\circ \psi\vert_{\dom \lambda}$,
\item \label{welldefined} for all $\lambda,\mu\in P$ and all $x\in\dom\lambda\cap\dom\mu$ with $\germ_x\lambda=\germ_x\mu$, we have 
\[
\germ_{\psi(x)}\nu(\lambda) = \germ_{\psi(x)}\nu(\mu),
\]
\item \label{mult_pg} for all $\lambda,\mu\in P$, for all $x\in \lambda^{-1}(\cod\lambda\cap\dom\mu)$ we have 
\[ \germ_{\psi(\lambda(x))}\nu(\mu) \cdot \germ_{\psi(x)}\nu(\lambda) = \germ_{\psi(x)}\nu(h),\]
where $h$ is an element of $P$ with $x\in\dom h$ such that there is an open set $U$ with 
\[
x\in U\subseteq \lambda^{-1}(\cod\lambda\cap\dom \mu)\cap\dom h
\]
and $\mu\circ\lambda\vert_U = h\vert_U$,
\item \label{unit_pg} for all $\lambda\in P$ and all $x\in \dom \lambda$ such that there exists an open set $U$ with $x\in U\subseteq\dom \lambda$ and $\lambda\vert_U = \id_U$ we have 
\[
\germ_{\psi(x)}\nu(\lambda) = \germ_{\psi(x)}\id_{U'}
\]
with $U'\sceq \coprod_{i\in I} V'_i$.
\end{enumerate}
\end{enumerate}
The orbifold atlas $\mathcal V$ is called the \textit{domain atlas} of the
representative $\hat f$, and the set
\[ \{ (V'_i, G'_i, \pi'_i) \mid i\in I\}\]
is called the \textit{range family} of $\hat f$. The latter set is not necessarily indexed by $I$.
\end{defi}

Condition (\apref{R}{mult_pg}{}) is in fact independent of the choice of $h$. The technical (and easily satisfied) condition in (\apref{R}{liftings}{}) that each two orbifold charts in $\mc V$ be distinct is required because we use $I$ as an index set for $\mathcal V$ in (\apref{R}{P}{}) and other places.

Example~\ref{notf} below shows that the continuous map $f$ in (\apref{R}{f}{})
cannot be chosen arbitrarily. It is not even sufficient for $f$ to be any
homeomorphism.

\begin{example}\label{notf}
Recall the orbifold $(Q,\mathcal U_1)$ from Example~\ref{notcompatible}. The map 
\[
f\co Q\to Q, \quad f(x) = \sqrt{x},
\]
is a homeomorphism on $Q$. We show that $f$ has no local lift at $0$. Each orbifold chart in $\mc U_1$ that uniformizes a neighborhood of $0$ is isomorphic to an orbifold chart of the form $(I, \{\pm \id_I\}, \pr)$  where $I=(-a,a)$ for some $0<a<1$. Seeking a contradiction assume that $\tilde f$ is a local lift of $f$ at $0$ with domain $I = (-a,a)$. For each $x\in I$, necessarily $\tilde f(x) \in \big\{ \pm \sqrt{|x|} \big\}$. Since $\tilde f$ is required to be continuous, there remain four possible candidates for $\tilde f$, namely
\begin{align*}
\tilde f_1(x) &= \sqrt{ |x| },& \tilde f_2 &= -\tilde f_1,
\\
\tilde f_3 (x) & = 
\begin{cases}
\sqrt{x} & x\geq 0
\\
-\sqrt{-x} & x\leq 0,
\end{cases}
&
\tilde f_4 & = -\tilde f_3.
\end{align*}
But none of these is differentiable in $x=0$, hence there is no local lift of $f$ at $0$.
\end{example}

The following example shows that the pair $(P,\nu)$ is not uniquely determined
by the choice of the family of local lifts.

\begin{example}\label{Pfinite} Recall the orbifold $(Q,\mathcal U_1)$  and the representative $\mathcal V_1 = \{V_1\}$ of $\mathcal U_1$ from Example~\ref{notcompatible}.  The map $f\co Q\to Q$, $q\mapsto 0$, is clearly continuous and has the local lift 
\[ 
\tilde f \co\left\{
\begin{array}{ccc}
(-1,1) & \to & (-1,1)
\\
x & \mapsto & 0
\end{array}\right.
\]
with respect to $V_1$ and $V_1$. Consider the quasi-pseudogroup $P=\{ \pm\id_{(-1,1)} \}$
on $(-1,1)$. Proposition~2.12 in \cite{Moerdijk_Mrcun} implies that $P$
generates $\Psi(\mathcal V_1)$. The triple $(f,\tilde f, P)$ can be completed in
the following two different ways to representatives of orbifold maps on $(Q,\mc U_1)$:
\begin{enumerate}[(a)]
\item $\nu_1(\pm \id_{(-1,1)}) \sceq \id_{(-1,1)}$,
\item $\nu_2(\id_{(-1,1)}) \sceq \id_{(-1,1)}$, $\nu_2(-\id_{(-1,1)}) \sceq -\id_{(-1,1)}$.
\end{enumerate}
We will see in Example~\ref{differenthomoms} below that $(f,\tilde f, P,\nu_1)$ and $(f,\tilde f, P,\nu_2)$ give rise to different groupoid homomorphisms.
\end{example}

\begin{prop}\label{orb_gr} Let $\hat f = (f, \{\tilde f_i\}_{i\in I}, P, \nu )$
be a representative of an orbifold map from $(Q,\mathcal U)$ to $(Q',\mathcal U')$.
Suppose that $\mathcal V= \{ (V_i,G_i,\pi_i) \mid i\in I\}$, is the domain atlas of $\hat f$, which is an orbifold atlas of $(Q,\mc U)$ indexed by $I$. Let $\mathcal V'$ be an orbifold atlas of $(Q',\mc U')$ which contains the range family $\big\{ (V'_i, G'_i, \pi'_i) \ \big\vert\ i\in I  \big\}$. 
Define the map $\varphi_0\co \Gamma(\mathcal V)_0 \to \Gamma(\mathcal V')_0$
by  
\[ \varphi_0 \sceq \coprod_{i\in I}\tilde f_i.\]
Suppose that $\varphi_1\co \Gamma(\mathcal V)_1 \to \Gamma(\mathcal V')_1$ is
determined by
\[ \varphi_1(\germ_x\lambda) \sceq \germ_{\varphi_0(x)}\nu(\lambda)\]
for all $\lambda\in P$, $x\in\dom\lambda$.
Then 
\[ \varphi = (\varphi_0,\varphi_1) \co \Gamma(\mathcal V) \to \Gamma(\mathcal V') \]
is a homomorphism.
Moreover, $\alpha_{\mathcal V'}\circ |\varphi| = f\circ \alpha_{\mathcal
V}$.
\end{prop}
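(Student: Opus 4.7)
The plan is to extend the formula defining $\varphi_1$ from germs of elements of the generating quasi-pseudogroup $P$ to germs of arbitrary elements of $\Psi(\mc V)$, which is what the arrow set $\Gamma(\mc V)_1$ actually consists of. Given $\germ_x f \in \Gamma(\mc V)_1$ with $f\in\Psi(\mc V)$, the defining property of ``generates'' (Definition~\ref{generatepsgr}) together with (\apref{R}{P}{}) furnishes some $\lambda\in P$ with $x\in\dom\lambda$ and $\germ_x f = \germ_x\lambda$, so I would set $\varphi_1(\germ_x f) \sceq \germ_{\varphi_0(x)}\nu(\lambda)$. Independence of the choice of such $\lambda$ is precisely condition (\apref{R}{welldefined}{}), which establishes that $\varphi_1$ is well-defined on all of $\Gamma(\mc V)_1$.

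I would then verify the five groupoid axioms for $\varphi=(\varphi_0,\varphi_1)$. Compatibility with $s$ is immediate from the formula. Compatibility with $t$ follows from (\apref{R}{invariant}{}): since $\psi=\varphi_0$, one has $t(\varphi_1(\germ_x\lambda)) = \nu(\lambda)(\varphi_0(x)) = \varphi_0(\lambda(x)) = \varphi_0(t(\germ_x\lambda))$. For multiplicativity of composable germs $\germ_{\lambda(x)}\mu$ and $\germ_x\lambda$ with $\lambda,\mu\in P$, the quasi-pseudogroup axiom supplies $h\in P$ locally agreeing with $\mu\circ\lambda$ near $x$, so $\germ_x(\mu\circ\lambda)=\germ_x h$, and condition (\apref{R}{mult_pg}{}) directly yields the required identity. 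The unit axiom is handled by (\apref{R}{unit_pg}{}), and compatibility with inversion then follows formally from the preceding properties. For smoothness, $\varphi_0=\coprod_{i\in I}\tilde f_i$ is smooth as a disjoint union of smooth maps; for $\varphi_1$, in the germ charts $\varphi_\lambda\co U_\lambda\to\dom\lambda$ and $\varphi_{\nu(\lambda)}\co U_{\nu(\lambda)}\to\dom\nu(\lambda)$ from Construction~\ref{constr_groupoid}, the local expression of $\varphi_1$ reduces to $x\mapsto\varphi_0(x)$, which is smooth.

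The remaining identity on orbit spaces is a short computation: for $[x]\in|\Gamma(\mc V)|$ with $x\in V_i$, Proposition~\ref{atlashomeom} and the definition of a local lift give $\alpha_{\mc V'}(|\varphi|([x])) = \pi'_i(\tilde f_i(x)) = f(\pi_i(x)) = f(\alpha_{\mc V}([x]))$. The main obstacle I anticipate is the verification of well-definedness and multiplicativity of $\varphi_1$, since both pass through non-canonical choices of $\lambda\in P$ representing a given germ; conditions (\apref{R}{welldefined}{}) and (\apref{R}{mult_pg}{}) must be applied together with careful bookkeeping of the open sets on which the chosen representatives agree, so that the germ-level identities required for the functor axioms can be deduced from the merely local compatibilities built into the notion of a quasi-pseudogroup.
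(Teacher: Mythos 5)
Your proposal is correct and follows essentially the same route as the paper's proof: well-definedness of $\varphi_1$ via the generating property of $P$ together with (\apref{R}{welldefined}{}), compatibility with the structure maps via (\apref{R}{invariant}{}), (\apref{R}{mult_pg}{}), (\apref{R}{unit_pg}{}), smoothness of $\varphi_1$ by reducing to $\varphi_0$ in the germ charts, and the same orbit-space computation for $\alpha_{\mathcal V'}\circ|\varphi| = f\circ\alpha_{\mathcal V}$. No gaps.
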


\begin{proof} 
Obviously, $\varphi_0$ is smooth. To show that $\varphi_1$ is a well-defined map on all of $\Gamma(\mc V)_1$, let $g\in\Psi(\mathcal V)$ and $x\in \dom g$. Then there exists $\lambda\in P$ such that $x\in\dom\lambda$ and
\[ g\vert_U = \lambda\vert_U\]
for some open subset $U\subseteq \dom g \cap \dom \lambda$ with $x\in U$. Hence $\germ_x g = \germ_x\lambda$. Thus 
\[ \varphi_1(\germ_x g) = \varphi_1(\germ_x\lambda) = \germ_{\varphi_0(x)}\nu(\lambda).\]
If there is $\mu\in P$ such that $x\in\dom\mu$ and $g\vert_W =
\mu\vert_W$ for some open subset $W$ of $\dom g \cap \dom \mu$ with $x\in W$, then $\germ_x \mu= \germ_x\lambda$. By
(\apref{R}{welldefined}{}), $\germ_{\varphi_0(x)}\nu(\mu) = \germ_{\varphi_0(x)}\nu(\lambda)$ and thus
\[ \varphi_1(\germ_x \mu) = \varphi_1(\germ_x\lambda).\]
This shows that $\varphi_1$ is indeed well-defined on all of
$\Gamma(\mathcal V)_1$. The properties (\apref{R}{invariant}{}), (\apref{R}{mult_pg}{}) and
(\apref{R}{unit_pg}{}) yield that $\varphi$ commutes with the  structure maps. It remains to show that $\varphi_1$ is smooth. For this, let $\germ_x\lambda\in \Gamma(\mathcal V)_1$ with $\lambda\in P$. The definition of $\nu$ shows that $\varphi_1$ maps
\[ U\sceq \{ \germ_y\lambda\mid y\in \dom\lambda\}\]
to
\[ U'\sceq \{ \germ_z\nu(\lambda) \mid z\in\dom\nu(\lambda) \}.\]
Now the diagram
\[
\xymatrix{
U \ar[d]_s\ar[r]^{\varphi_1} & U'\ar[d]^s  &&   \germ_y\lambda
\ar@{|->}[r] \ar@{|->}[d] & \germ_{\varphi_0(y)}\nu(\lambda)\ar@{|->}[d]
\\
\dom\lambda \ar[r]^{\varphi_0} & \dom\nu(\lambda) &&  y \ar@{|->}[r] & \varphi_0(y)
}
\]
commutes, the vertical maps (restriction of source maps) are diffeomorphisms and $\varphi_0$ is smooth, so $\varphi_1$ is smooth.
Finally, suppose $x\in V_i$. Then 
\begin{align*}
 \left(\alpha_{\mathcal V'}\circ|\varphi|\right)\big([x]\big) & = \alpha_{\mathcal V'}\big([\varphi_0(x)]\big) =  \pi'_i(\tilde f_i(x)) = f(\pi_i(x))
= \left(f\circ \alpha_{\mathcal V}\right)\big([x]\big).
\end{align*}
This completes the proof.
\end{proof}

Example~\ref{differenthomoms} below shows that the choice of $(P,\nu)$ in Definition~\ref{repchartedorbmap} is not unique, and more elaborated examples would immediately show that not each pair $(P,\nu)$ automatically satisfies the compatibility conditions in \apref{R}{invariant}{}). However, for some important types of maps between orbifolds (e.g.\@ isomorphisms) the choice of $(P,\nu)$ is canonical and the compatibility conditions are also automatic, which is the essence of Proposition~\ref{extending} below.

\begin{example}\label{differenthomoms}
Recall the setting of Example~\ref{Pfinite} and the associated groupoid $\Gamma\sceq\Gamma(\mathcal V_1)$ from Example~\ref{orbexample}. The homomorphism $\varphi=(\varphi_0,\varphi_1)$ of $\Gamma$ induced by $(f,\tilde f, P, \nu_1)$ is $\varphi_0 = \tilde f$ and
\[ \varphi_1( \germ_x(\pm \id_{(-1,1)}) ) = \germ_0 \id_{(-1,1)}.\]
The homomorphism $\psi=(\psi_0,\psi_1)\co\Gamma\to \Gamma$ induced by $(f,\tilde f, P,\nu_2)$ is $\psi_0=\tilde f$ and 
\begin{align*}
 \psi_1(\germ_x\id_{(-1,1)}) & = \germ_0 \id_{(-1,1)},
\\
 \psi_1(\germ_x(-\id_{(-1,1)}) ) & = \germ_0(-\id_{(-1,1)}).
\end{align*}
\end{example}

The following proposition is the converse to Proposition~\ref{orb_gr}. Its proof is constructive. In Section~\ref{atlascategory} we will use this construction to define the functor between the category of orbifolds and that of marked atlas groupoids.

\begin{prop}\label{inducedorbgr}
Let $\mathcal V$ be a representative of $\mc U$, $\mathcal V'$ a representative of $\mc U'$, and
\[ \varphi = (\varphi_0,\varphi_1)\co \Gamma(\mathcal V) \to \Gamma(\mathcal V') \]
a homomorphism.
Then $\varphi$ induces a representative of an orbifold map 
\[ (f, \{\tilde f_i\}_{i\in I}, P, \nu) \]
with domain atlas $\mathcal V$, range family contained in $\mathcal V'$, and
\[ \tilde f_i = \varphi_0\vert_{V_i}\]
for all $i\in I$. Moreover, we have $f= \alpha_{\mathcal V'}\circ|\varphi|\circ \alpha_{\mathcal V}^{-1}$.
\end{prop}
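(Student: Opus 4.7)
The plan is to reconstruct all four pieces of data $(f,\{\tilde f_i\},P,\nu)$ from the homomorphism $\varphi$ and then verify the conditions (\apref{R}{f}{})--(\apref{R}{nu}{}).

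First I would set $f:=\alpha_{\mc V'}\circ|\varphi|\circ\alpha_{\mc V}^{-1}$, which is continuous as a composition of continuous maps (Proposition~\ref{atlashomeom} and the preceding Lemma), and define $\tilde f_i:=\varphi_0|_{V_i}$ for each $i\in I$. Since $V_i$ is connected and $\varphi_0$ takes values in $\coprod_j V'_j$, the image $\varphi_0(V_i)$ lies in a single chart $V'_{i'}$ of $\mc V'$; this defines the range family. The equality $\pi'_{i'}\circ\tilde f_i=f\circ\pi_i$ then follows from $\alpha\circ\pr=\pi$, which was established inside the proof of Proposition~\ref{atlashomeom}.

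Next, for $P$ I would take the collection of all changes of charts $\lambda$ of $\mc V$ (so in particular with connected domain contained in a single $V_i$) that are sufficiently small that a single transition $g\in\Psi(\mc V')$ satisfies $\varphi_1(\germ_x\lambda)=\germ_{\varphi_0(x)}g$ for every $x\in\dom\lambda$. The \'etale structure of $\Gamma(\mc V')$, combined with continuity of $\varphi_1$, guarantees that such a $g$ exists after restricting $\lambda$ to a small enough open set; hence $P$ generates $\Psi(\mc V)$. That $P$ is a quasi-pseudogroup follows from $\varphi_1$ preserving inverses and composition. To construct $\nu$, given $\lambda\in P$ with associated $g$, let $(V'_{i'},G'_{i'},\pi'_{i'})$ be the chart containing $\varphi_0(\dom\lambda)$ and choose an open $G'_{i'}$-stable connected subset $W'\subseteq V'_{i'}\cap\dom g$ with $\varphi_0(\dom\lambda)\subseteq W'$. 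Then $(W',(G'_{i'})_{W'},\pi'_{i'}|_{W'})$ is an orbifold chart in $\mc U'$, and $g|_{W'}$ is an open embedding into the corresponding restriction of $(V'_{j'},G'_{j'},\pi'_{j'})$ around $g(W')$; set $\nu(\lambda):=g|_{W'}$.

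Properties (\apref{R}{invariant}{})--(\apref{R}{unit_pg}{}) would then follow directly from the germ equation and the fact that $\varphi_1$ commutes with all structure maps: (\apref{R}{invariant}{}) is just the integrated form of the germ equation, (\apref{R}{welldefined}{}) is well-definedness of $\varphi_1$ on germs, and (\apref{R}{mult_pg}{}), (\apref{R}{unit_pg}{}) are multiplicativity and unit-preservation of $\varphi_1$. The formula $f=\alpha_{\mc V'}\circ|\varphi|\circ\alpha_{\mc V}^{-1}$ holds by construction. I expect the main obstacle to be geometric rather than algebraic: one must arrange that both $W'$ and $g(W')$ are $G$-stable so that they really are (restrictions of) orbifold charts. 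This is handled by first shrinking $\lambda$ to obtain a uniform witness $g$, then selecting a small enough $G'_{i'}$-stable connected neighborhood of $\varphi_0(\dom\lambda)$ on which $g$ remains defined (such neighborhoods exist since $G'_{i'}$ acts by diffeomorphisms on the Hausdorff manifold $V'_{i'}$), and applying the analogous argument to $\lambda^{-1}$ on the codomain side. Further shrinking does not compromise the generation property, because $P$ is closed under restriction.
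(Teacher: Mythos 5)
Your overall route is the same as the paper's: use continuity of $\varphi_1$ and the germ topology to produce, locally, a single transition $g\in\Psi(\mc V')$ with $\varphi_1(\germ_y\lambda)=\germ_{\varphi_0(y)}g$, build $P$ out of suitably small changes of charts, let $\nu$ record the witnesses, deduce conditions (a)--(d) of (R4) in Definition~\ref{repchartedorbmap} from functoriality, and obtain the range family from connectedness of the $V_i$. The one step that does not hold as you justify it is the construction of the domain chart of $\nu(\lambda)$: you fix $\lambda$ (subject only to the uniform-witness condition) and then claim there is a $G'_{i'}$-stable connected open set $W'$ with $\varphi_0(\dom\lambda)\subseteq W'\subseteq \dom g$, ``since $G'_{i'}$ acts by diffeomorphisms''. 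What Lemma~\ref{SIN} provides are arbitrarily small stable neighborhoods of a \emph{point}; it does not provide a stable neighborhood of an arbitrary connected subset inside a prescribed open set, and in general none exists. For instance, for $\{\pm\id\}$ acting on $\R^2$, let $C$ be a segment running from $(-1,0)$ through $(1,0)$ to $(2,0)$ and $D$ a thin open neighborhood of $C$ avoiding $(-2,0)$: any connected open $W'$ with $C\subseteq W'\subseteq D$ contains the pair $\pm(1,0)$, so stability would force $-W'=W'$ and hence $(-2,0)\in W'$, a contradiction; and without stability the triple $(W',H',\pi'\vert_{W'})$ fails to be an orbifold chart. Since the uniform-witness condition does not control the size or shape of $\varphi_0(\dom\lambda)$ relative to $\dom g$, the stable $W'$ you select need not exist, and with it the whole definition of $\nu(\lambda)$ (which must contain $\varphi_0(\dom\lambda)$ in its domain for (R4)(a)) breaks down.

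The repair is exactly the order of choices in the paper's proof, and your closing remark that further shrinking does not compromise generation already contains the needed idea: for each $\lambda$ and each $x\in\dom\lambda$, first choose the witness $g$ and a stable connected neighborhood $S'$ of the single point $\varphi_0(x)$ with $S'\subseteq\dom g$ (Lemma~\ref{SIN}), and only then shrink $\dom\lambda$ around $x$, using continuity of $\varphi_0$, so that $\varphi_0(\dom\lambda)\subseteq S'$; put these pointwise-shrunk restrictions into $P$ and set $\nu(\lambda)\sceq g\vert_{S'}$, an open embedding of the restricted chart into $(V'_{j'},G'_{j'},\pi'_{j'})$ (image stability is automatic by Remark~\ref{groupiso}, so no separate argument on the codomain side is needed). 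A further small point: since the witness $g$ (and $S'$) attached to a given $\lambda$ is not unique, you must fix one choice per element of $P$ for $\nu$ to be a map at all; this is the paper's ``adjust choices'' step, and different choices only move $(P,\nu)$ within its equivalence class, so the statement is unaffected once the choice is made explicit.
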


\begin{proof} 
We start by showing that for each $h\in\Psi(\mathcal V)$ and each $x\in\dom h$ there exist an element $g\in\Psi(\mathcal V')$ and an open  neighborhood $U$ of $x$ (which may depend on $g$) with $U\subseteq \dom h$ such that for each $y\in U$ we have
\begin{equation}\label{thickidentity}
\varphi_1(\germ_y h) = \germ_{\varphi_0(y)} g.
\end{equation} 
So, let $h\in\Psi(\mc V)$ and $x\in \dom h$. By the definitions of $\Gamma(\mathcal V)_1$ and $\varphi_1$, there exists $g\in\Psi(\mathcal V')$ such that
\[ \varphi_1(\germ_x h) = \germ_{\varphi_0(x)} g.\]
Since $\varphi_1$ is continuous, the preimage of the $\germ_{\varphi_0(x)} g$--neighborhood
\[ U'_g = \{ \germ_z g\mid z\in\dom g \} \]
is a neighborhood of $\germ_x h$. Hence there exists an open neighborhood $U$ of $x$ with $U\subseteq \dom h$ such that 
\[ U_{h\vert U} = \{ \germ_y h\mid y\in U\} \subseteq \varphi_1^{-1}\left(U'_g\right).\]
Thus, for all $y\in U$ we have \eqref{thickidentity} as claimed. We remark that each two possible choices for $g$ coincide on some neighborhood of $\varphi_0(x)$.

For each $h\in\Psi(\mc V)$ and each $x\in\dom h$ we now choose a pair $(g,U)$ where $g\in\Psi(\mc V')$ is an embedding between some orbifold charts in $\mc U'$ and $U$ is an open neighborhood of $x$ such that $h\vert_U$ is a change of charts of $\mc V$. Let $P(h,x)\sceq (g,U)$. We adjust choices such that for $h_1,h_2\in\Psi(\mc V)$ and $x_1\in\dom h_1$, $x_2\in\dom h_2$ the chosen pairs $P(h_1,x_1) = (g_1,U_1)$ resp.\@ $P(h_2,x_2) = (g_2,U_2)$ are either equal or $U_1\not=U_2$. Let $P$ denote the family of the changes of charts we have chosen in this way:
\[
 P  = \{ h\vert_U\co U \to h(U)\mid h\in\Psi(\mc V),\ x\in\dom h,\ P(h,x) = (g,U)\}.
\]
By construction, $P$ is a quasi-pseudogroup which generates
$\Psi(\mathcal V)$. We define the map $\nu\co P \to \Psi(\mc V')$ by 
\[ \nu(\lambda) \sceq g\]
where $g$ is the unique element in $\Psi(\mc V')$ attached to $\lambda\in P$ by our choices. For $\lambda\in P$ and 
$x\in\dom\lambda$ we clearly have
\begin{equation}
\label{welldefinedgerms} \varphi_1(\germ_x \lambda) = \germ_{\varphi_0(x)}\nu(\lambda). 
\end{equation}
Properties~(\apref{R}{nu}{}) are easily checked using the compatibility of $\varphi$ with the structure maps.

It remains to show that the image  of $\varphi_0\vert_{V_i}$ is
contained in  $V'_j$ for some orbifold chart $(V'_j, G'_j, \pi'_j)\in\mathcal V'$. Since $V_i$ is connected, the image $\varphi_0(V_i)$ is connected as well. The connected components of $\Gamma(\mc V')_0$ are exactly the sets $W'$ with $(W',G',\varphi')\in \mc V$. From this the claim follows.
\end{proof}

Proposition~\ref{inducedorbgr} guarantees that each homomorphism
\[ \varphi = (\varphi_0,\varphi_1) \co \Gamma(\mathcal V) \to \Gamma(\mathcal V') \]
induces a representative of an orbifold map $(f, \{\tilde f_i\}_{i\in I},
P,\nu)$ with domain atlas $\mathcal V$, range family contained in $\mathcal
V'$, $\tilde f_i = \varphi_0\vert_{V_i}$, and $f=\alpha_{\mc V'}\circ|\varphi|\circ\alpha_{\mc V}^{-1}$. 
For the pair $(P,\nu)$, Proposition~\ref{inducedorbgr} allows (in general) a whole bunch of choices. On the
other hand, different representatives of an orbifold map may induce the same
groupoid homomorphism. In view of Proposition~\ref{orb_gr} and the proof of Proposition~\ref{inducedorbgr}, the relevant information stored by the pair $(P,\nu)$ are the germs of the elements in $P$ and the via $\nu$ associated germs of elements 
in $\Psi(\mathcal V')$. This observation is the motivation for the equivalence
relation in the following definition.

\begin{defi}\label{Pnu_equiv} Let 
\[
\hat f \sceq (f, \{\tilde f_i\}_{i\in I}, P_1, \nu_1)\quad\text{and}\quad \hat g\sceq (g,\{\tilde g_i\}_{i\in I}, P_2,\nu_2)
\]
be two representatives of orbifold maps with the same domain atlas $\mathcal V$ representing the orbifold structure $\mc U$ on $Q$ and both range families being contained in the orbifold atlas $\mathcal V'$ of $(Q',\mc U')$. Set $\psi\sceq\coprod_{i\in I}\tilde f_i$. 
We say that $\hat f$ is  \textit{equivalent} to $\hat g$ if $f=g$, $\tilde f_i = \tilde g_i$ for all $i\in I$, and 
\[ \germ_{\psi(x)} \nu_1(\lambda_1) = \germ_{\psi(x)}\nu_2(\lambda_2)\]
for all $\lambda_1\in P_1$, $\lambda_2\in P_2$, $x\in\dom \lambda_1 \cap \dom\lambda_2$ with  $\germ_x\lambda_1 = \germ_x\lambda_2$. This defines an equivalence relation. The equivalence class of $\hat f$ will be denoted by $[\hat f]$ or
\[ (f, \{\tilde f_i\}_{i\in I}, [P_1, \nu_1]), \]
or even $\hat f$ if it is clear that we refer to the equivalence class. It is called an \textit{orbifold map with domain atlas $\mathcal V$ and range atlas $\mathcal V'$}, in short \textit{orbifold map with $(\mathcal V,\mathcal V')$} or, if the specific orbifold atlases are not important, a \textit{charted orbifold map}. The set of all orbifold maps with $(\mathcal V,\mathcal V')$ is denoted $\Orbmap(\mathcal V,\mathcal V')$. For convenience we  often denote an element $\hat h\in \Orbmap(\mathcal V,\mathcal V')$ by 
\[
 \mc V \stackrel{\hat h}{\longrightarrow} \mc V'.
\]
\end{defi}

\begin{remark}
Instead of using the pair $[P,\nu]$ in the definition of charted orbifold maps one could also define a variant where $[P,\nu]$ is replaced by a mapping 
\[
 \theta\colon \Germ(\Psi(\mc V)) \to \Germ(\Psi(\mc V')),
\]
which maps germs of transitions to germs such that the equivalence conditions in (\apref{R}{invariant}{}) are satisfied by $\theta$. This would stress the actual ``germ nature'' of $[P,\nu]$ and might simplify the compositions of charted orbifold maps as defined in Construction~\ref{constr_comp} below and support the intuition for its definition. Anyhow, for the reasonings in this manuscript (and other concrete situations) it is more convenient to work with the actual transitions and to have the possibility to choose ``small'' quasi-pseudogroups $P$, for which reason we decided to use the variant with the pair $[P,\nu]$.
\end{remark}

Propositions~\ref{orb_gr} and \ref{inducedorbgr} yield the following statement of which we omit the proof.

\begin{prop}\label{conclusion} Let $\mathcal V$ be a representative of $\mc U$, and $\mathcal V'$ a representative of $\mc U'$. Then the set $\Orbmap(\mathcal V,\mathcal V')$ of all orbifold maps with $(\mathcal V, \mathcal V')$ and the set $\Hom(\Gamma(\mathcal V),\Gamma(\mathcal V'))$ of all homomorphisms from $\Gamma(\mathcal V)$ to $\Gamma(\mathcal V')$  are in bijection. More precisely, the construction in Proposition~\ref{orb_gr} induces a bijection 
\[ F_1\co \Orbmap(\mathcal V,\mathcal V') \to \Hom(\Gamma(\mathcal V),\Gamma(\mathcal V')), \]
and the construction in Proposition~\ref{inducedorbgr} defines a bijection 
\[ F_2\co \Hom(\Gamma(\mathcal V),\Gamma(\mathcal V')) \to \Orbmap(\mathcal V,\mathcal V'),\]
which is inverse to $F_1$.
\end{prop}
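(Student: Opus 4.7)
The plan is to verify that the assignments $F_1$ and $F_2$ from Propositions~\ref{orb_gr} and \ref{inducedorbgr} are well-defined maps between the stated sets and are mutually inverse. Since the construction in Proposition~\ref{inducedorbgr} involves arbitrary choices, the crucial point is that the resulting data differ only by the equivalence relation of Definition~\ref{Pnu_equiv}, so passage to equivalence classes is essential.

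First I would check that $F_1$ descends to equivalence classes. Given two equivalent representatives $\hat f = (f,\{\tilde f_i\},P_1,\nu_1)$ and $\hat g = (f,\{\tilde f_i\},P_2,\nu_2)$, Proposition~\ref{orb_gr} produces homomorphisms $\varphi,\varphi'$ with the same object component $\varphi_0 = \varphi'_0 = \coprod_i \tilde f_i$. On arrows, any $\germ_x h \in \Gamma(\mc V)_1$ can be represented as $\germ_x\lambda_1$ with $\lambda_1\in P_1$ and as $\germ_x\lambda_2$ with $\lambda_2\in P_2$ (because both generate $\Psi(\mc V)$); by Definition~\ref{Pnu_equiv} we obtain $\germ_{\varphi_0(x)}\nu_1(\lambda_1) = \germ_{\varphi_0(x)}\nu_2(\lambda_2)$, so $\varphi_1 = \varphi'_1$. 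Thus $F_1$ is well-defined on $\Orbmap(\mc V,\mc V')$.

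Next I would show $F_1\circ F_2 = \id$ on $\Hom(\Gamma(\mc V),\Gamma(\mc V'))$. Start with a homomorphism $\varphi$, and let $(f,\{\tilde f_i\},P,\nu)$ be any representative produced by the construction in Proposition~\ref{inducedorbgr}; in particular $\tilde f_i = \varphi_0|_{V_i}$ and, by \eqref{welldefinedgerms}, $\varphi_1(\germ_x\lambda) = \germ_{\varphi_0(x)}\nu(\lambda)$ for all $\lambda\in P$, $x\in\dom\lambda$. Applying $F_1$ to this data, the homomorphism $\varphi' = F_1(F_2(\varphi))$ satisfies $\varphi'_0 = \coprod_i\tilde f_i = \varphi_0$ and $\varphi'_1(\germ_x\lambda) = \germ_{\varphi_0(x)}\nu(\lambda) = \varphi_1(\germ_x\lambda)$ on the generating set, and hence on all of $\Gamma(\mc V)_1$ by well-definedness of $\varphi'_1$. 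So $\varphi' = \varphi$.

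The step I expect to be the main obstacle is showing $F_2\circ F_1 = \id$, precisely because $F_2$ makes arbitrary selections. Starting from $[\hat f] = [(f,\{\tilde f_i\},P,\nu)]$, let $\varphi = F_1(\hat f)$ and let $\hat g = (g,\{\tilde g_i\},Q,\mu)$ be any representative produced by $F_2(\varphi)$. By the construction in Proposition~\ref{orb_gr}, $\varphi_0 = \coprod_i \tilde f_i$, so $\tilde g_i = \varphi_0|_{V_i} = \tilde f_i$, and from Proposition~\ref{orb_gr} we also have $g = \alpha_{\mc V'}\circ|\varphi|\circ\alpha_{\mc V}^{-1} = f$. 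It remains to verify the germ condition of Definition~\ref{Pnu_equiv}: for $\lambda\in P$, $\eta\in Q$ with $\germ_x\lambda = \germ_x\eta$ one has
\[
\germ_{\psi(x)}\mu(\eta) = \varphi_1(\germ_x\eta) = \varphi_1(\germ_x\lambda) = \germ_{\psi(x)}\nu(\lambda),
\]
where $\psi = \coprod_i\tilde f_i$; the first equality comes from \eqref{welldefinedgerms} applied to $\hat g$, the last from Proposition~\ref{orb_gr} applied to $\hat f$. Hence $\hat g$ is equivalent to $\hat f$, so $F_2(F_1([\hat f])) = [\hat f]$. Combining these two directions gives that $F_1$ and $F_2$ are mutually inverse bijections, which is the desired conclusion.
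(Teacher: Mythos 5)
Your proof is correct and is essentially the argument the paper intends: the paper omits the proof, saying only that Propositions~\ref{orb_gr} and \ref{inducedorbgr} yield the statement, and your verification (that $F_1$ descends to equivalence classes via Definition~\ref{Pnu_equiv}, plus the two germ computations using \eqref{welldefinedgerms} and the defining formula for $\varphi_1$ to get $F_1\circ F_2=\id$ and $F_2\circ F_1=\id$, which also absorbs the arbitrary choices made in Proposition~\ref{inducedorbgr}) is exactly the routine check being omitted. One cosmetic remark: do not reuse the letter $Q$ for the quasi-pseudogroup of $\hat g$, since $Q$ already denotes the underlying topological space.
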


\section{The category of reduced orbifolds}\label{sec_redorbcat}

To define an orbifold category where the objects are orbifolds and the morphisms are equivalence classes of charted orbifold maps we have to answer the following questions:
\begin{enumerate}[(i)]
\item \label{q1} When shall two charted orbifold maps be considered as equal? In other words, what shall be the equivalence relation?
\item \label{q2} What shall be the identity morphism of an orbifold?
\item \label{q3} How does one compose $\varphi\in \Orbmap(\mathcal V,\mathcal V')$ and $\psi\in\Orbmap(\mathcal V',\mathcal V'')$?
\item \label{q4} What is the composition in the category? 
\end{enumerate}

The leading idea is that charted orbifold maps are equivalent if and only if they induce the same charted orbifold map on common refinements of the orbifold atlases. Therefore, we will introduce the notion of an induced charted orbifold map.

It turns out that answers to the questions \eqref{q2} and \eqref{q3} naturally extend to answers of \eqref{q1} and \eqref{q4}, and that the arising category has a counterpart in terms of marked atlas groupoids and homomorphisms. We start with the definition of the identity morphism of an orbifold. This definition is based on the idea that the identity morphism of $(Q,\mathcal U)$ shall be represented by a collection of local lifts of $\id_Q$ which locally induce $\id_S$ on some orbifold charts, and that each such collection which satisfies (\apref{R}{liftings}{}) shall be a representative.

\subsection{The identity morphism}\label{liftofid}

\begin{defirem}
Let $(Q,\mathcal U)$ and $(Q',\mathcal U')$ be orbifolds and let $f\co Q\to Q'$ be a continuous map. Suppose that $\tilde f$ is a local lift of $f$ \wrt the orbifold charts $(V,G,\pi)\in \mathcal U$ and $(V', G',\pi') \in \mathcal U'$. Further suppose that
\[
 \lambda \co (W,K,\chi) \to (V,G,\pi)\quad\text{and}\quad  \mu\co (W',K',\chi')\to (V',G',\pi')
\]
are embeddings between orbifold charts in $\mathcal U$ resp.\@  in $\mathcal U'$ such that 
\[
\tilde f(\lambda (W) ) \subseteq \mu(W').
\]
Then the map 
\[ 
\tilde g\sceq \mu^{-1}\circ\tilde f\circ \lambda\co W \to W'
\]
is a local lift of $f$ \wrt $(W,K,\chi)$ and $(W',K',\chi')$. We say that
$\tilde f$ \textit{induces the local lift $\tilde g$ \wrt $\lambda$ and $\mu$},
and we call $\tilde g$ the \textit{induced lift of $f$ \wrt $\tilde f$,
$\lambda$ and $\mu$.}
\[
\xymatrix{
& V \ar[r]^{\tilde f} & V'
\\
& \lambda(W) \ar[r]^{\tilde f\vert_{\lambda(W)}} \ar@{^{(}->}[u] & \mu(W') \ar@{_{(}->}[u]
\\
W \ar[uur]^{\lambda}\ar[rrr]^{\tilde g} \ar@{>->>}[ur]_{\lambda} &&& W' \ar[uul]_{\mu} \ar@{>->>}[ul]^{\mu}
}
\]
\end{defirem}

Suppose that $\tilde f$ is a local lift of the identity $\id_Q$ for some orbifold $(Q,\mathcal U)$. Proposition~\ref{induceslifts} below shows that $\tilde f$ induces the identity on sufficiently small orbifold charts. This means that locally $\tilde f$ is related to the identity itself via embeddings. In particular, $\tilde f$ is a local diffeomorphism. For its proof we need the following lemma, which is easily shown and crucially depends on the finiteness of $G$ and the Hausdorff property of $M$. We refer to \cite[Lemma~B.3]{Schmeding} for the details of the construction of the $G$-stable neighborhood $S$.

\begin{lemma}\label{SIN} 
Let $M$ be a manifold, $G$ a finite subgroup of $\Diff(M)$, and $x\in M$. There
exist arbitrary small open $G$--stable neighborhoods $S$ of $x$. Moreover, one
can choose $S$ so small that $G_S=G_x$, the isotropy group of $x$.
\end{lemma}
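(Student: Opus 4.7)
The plan is to construct the $G$-stable neighborhood $S$ by a three-step shrinking of any prescribed open neighborhood of $x$, so that arbitrary smallness comes for free by starting the construction inside that prescribed neighborhood.

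First, I would use Hausdorffness of $M$ together with finiteness of $G$ to separate $x$ from each of its non-trivial translates. For each $g\in G\setminus G_x$ one has $gx\neq x$, so there exist disjoint open neighborhoods of $x$ and $gx$; intersecting the one around $x$ with the $g^{-1}$-preimage of the one around $gx$ yields an open $W_g\ni x$ with $W_g\cap gW_g=\emptyset$. Taking the (finite) intersection $W_0\sceq\bigcap_{g\in G\setminus G_x}W_g$ gives an open neighborhood of $x$ satisfying $W_0\cap gW_0=\emptyset$ for every $g\notin G_x$.

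Second, I would symmetrize under the (finite) isotropy $G_x$ by setting $W_1\sceq\bigcap_{h\in G_x}hW_0$. Since each $h\in G_x$ fixes $x$ and is a diffeomorphism, $W_1$ is an open neighborhood of $x$ that is manifestly $G_x$-invariant, and the disjointness $W_1\cap gW_1=\emptyset$ for $g\notin G_x$ is inherited from $W_0$ because $W_1\subseteq W_0$.

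Third, since $M$ is locally connected, the connected component $S$ of $W_1$ containing $x$ is open. For $g\in G_x$ the image $gS$ is a connected subset of $W_1$ containing $gx=x$, hence $gS=S$; for $g\notin G_x$ one has $gS\cap S\subseteq gW_1\cap W_1=\emptyset$, and moreover $gS\neq S$ because $gx\in gS\setminus S$. Thus $S$ is $G$-stable in the sense of Section~\ref{sec_redorb}, and $G_S=G_x$. Arbitrary smallness is ensured by replacing $W_0$ at the outset by its intersection with any prescribed open neighborhood of $x$.

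This is essentially a bookkeeping argument; the only subtlety is combining the disjointness condition (produced by shrinking) with the $G_x$-invariance condition (produced by intersecting translates) simultaneously. The two-step construction handles this automatically because further shrinking of $W_0$ preserves the disjointness, so no obstruction arises.
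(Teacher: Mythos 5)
Your argument is correct: the paper gives no written proof of Lemma~\ref{SIN} (it is dismissed as ``easily shown''), and your three-step construction -- separating $x$ from its translates $gx$ for $g\notin G_x$ via Hausdorffness, symmetrizing under the finite group $G_x$, and passing to the connected component of $x$, all inside a prescribed neighborhood -- is exactly the standard argument the paper has in mind. One cosmetic point: for $g\in G_x$ the component argument directly gives only $gS\subseteq S$; equality follows by applying the same reasoning to $g^{-1}$, or by noting that $g$ restricts to a homeomorphism of $W_1$ and hence permutes its connected components.
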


\begin{prop}\label{induceslifts}
Let $(Q,\mathcal U)$ be an orbifold and suppose that  $\tilde f$ is a local lift of $\id_Q$ \wrt $(V,G,\pi)$, $(V',G',\pi')\in \mathcal U$. For each $v\in V$ there exist a restriction $(S, G_S, \pi\vert_S)$ of $(V,G,\pi)$ with $v\in S$ and a restriction $(S', (G')_{S'}, \pi'\vert_{S'})$ of $(V', G', \pi')$ such that $\tilde f\vert_S$ is an isomorphism from $(S, G_S, \pi\vert_S)$ to the orbifold chart $(S', (G')_{S'}, \pi'\vert_{S'})$. In particular, $\tilde f\vert_S$ induces the identity $\id_S$ \wrt $\id_S$ and $\left(\tilde f\vert_S\right)^{-1}$.
\end{prop}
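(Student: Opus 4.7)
The plan is to reduce $\tilde f$ locally to a self-map of a single chart by means of the compatibility of $(V,G,\pi)$ and $(V',G',\pi')$, and then to exploit smoothness together with the effectiveness of the orbifold action to force this self-map to coincide with the action of a single element of $G_v$.

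Set $v'\sceq\tilde f(v)$. Since $\pi(v)=\pi'(v')$, the compatibility of the two orbifold charts yields a change of charts $h\co\tilde V\to\tilde V'$ between open connected neighborhoods with $h(v)=v'$ and $\pi'\circ h=\pi\vert_{\tilde V}$. By Lemma~\ref{SIN} I may shrink $\tilde V$ so that it is open $G$-stable with $G_{\tilde V}=G_v$; Remark~\ref{groupiso}, combined with the fact that any $g'\in G'_{v'}$ meets the $G'$-stable set $\tilde V'$ at $v'$ and must therefore stabilize it, then yields $(G')_{\tilde V'}=G'_{v'}$. Shrinking once more by continuity of $\tilde f$ to ensure $\tilde f(\tilde V)\subseteq\tilde V'$, the composition $F\sceq h^{-1}\circ\tilde f\vert_{\tilde V}\co\tilde V\to\tilde V$ is a smooth self-map with $F(v)=v$ and $\pi\circ F=\pi\vert_{\tilde V}$; in particular $F(x)\in G_v\cdot x$ for every $x\in\tilde V$, since the $G$-orbits meeting $\tilde V$ reduce to $G_v$-orbits.

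The crucial---and only non-formal---step is to show that $F$ agrees with the action of some $g\in G_v$ on a sufficiently small connected $G$-stable neighborhood of $v$. I would average a Riemannian metric to a $G_v$-invariant one on $\tilde V$ and use the associated exponential to identify a $G_v$-stable neighborhood of $v$ with an open ball $B_\eps(0)\subseteq T_vV$ on which $G_v$ acts linearly and, by effectiveness of $G$, faithfully. In these coordinates $F$ becomes a smooth map $\hat F\co B_\eps(0)\to B_\eps(0)$ with $\hat F(0)=0$ and $\hat F(x)\in G_v\cdot x$; since $\hat F(tx)/t\in G_v\cdot x$ is a sequence in a finite set converging to $d_0\hat F(x)$, we obtain $d_0\hat F(x)\in G_v\cdot x$ for every $x\in T_vV$. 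On the regular locus $T_vV\setminus\bigcup_{g\ne\id}(T_vV)^g$---an open dense set, being the complement of finitely many proper subspaces---the element $g(x)\in G_v$ with $d_0\hat F(x)=g(x)x$ is unique and locally constant; the linear map $d_0\hat F$ must then coincide as a linear endomorphism with a single $g\in G_v$, since two linear maps agreeing on an open subset of a vector space agree everywhere. Replacing $\hat F$ by $g^{-1}\circ\hat F$ brings us to the case $d_0\hat F=\id$; the analogous ray argument in this case forces $\hat F=\id$ on the regular locus, and continuity together with density yields $\hat F=\id$ on all of $B_\eps(0)$.

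Translating back via the exponential and $h$, I obtain a connected open $G$-stable neighborhood $S$ of $v$ in $\tilde V$ together with an element $g\in G_v$ such that $\tilde f\vert_S=h\circ g\vert_S$, a composition of diffeomorphisms. Setting $S'\sceq h(S)\subseteq\tilde V'$ and applying \cite[Proposition~2.12]{Moerdijk_Mrcun} to the open embedding $h\vert_S\co(S,G_S,\pi\vert_S)\to(\tilde V',G'_{\tilde V'},\pi'\vert_{\tilde V'})$ of orbifold charts, $S'$ is open $G'$-stable, so that $(S',(G')_{S'},\pi'\vert_{S'})$ is a restriction of $(V',G',\pi')$. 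Combined with $\pi'\circ\tilde f\vert_S=\pi\vert_S$, this exhibits $\tilde f\vert_S$ as an isomorphism of orbifold charts. The final ``in particular'' assertion then follows immediately by substituting $\id_S$ and $\tilde f\vert_S$ into the definition of the induced lift.
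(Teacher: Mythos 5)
Your argument is correct, and it follows the paper's skeleton (compatibility of the two charts gives a change of charts $h$, a stable neighbourhood reduces $\tilde f$ to a map $F=h^{-1}\circ\tilde f$ of one chart into itself with $\pi\circ F=\pi$, and the crux is to show that such an $F$ is the restriction of a single group element), but it diverges at the crux: the paper simply invokes \cite[Lemma~2.11]{Moerdijk_Mrcun} to conclude that the induced lift $\lambda^{-1}\circ\tilde f\vert_S$ equals $h\vert_S$ for a unique $h\in H$, whereas you re-prove this rigidity statement from scratch by averaging a metric to make it $G_v$-invariant, linearizing at $v$ via the equivariant exponential map, using the ray limit $\hat F(tx)/t\to d_0\hat F(x)$ to place $d_0\hat F(x)$ in the finite set $G_v\cdot x$, identifying $d_0\hat F$ with one $g\in G_v$ on the (scale-invariant, open dense) regular locus, and then running the same ray argument for $g^{-1}\circ\hat F$ to force it to be the identity. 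What the paper's citation buys is brevity; what your version buys is self-containedness, and notably it proves the rigidity for a map that is a priori only smooth (not an embedding), which is exactly the form in which it is needed here, since $\tilde f$ is not assumed to be a local diffeomorphism beforehand. Two spots are stated rather than argued but are standard and unproblematic: the faithfulness of the isotropy representation of $G_v$ on $T_vV$ (it follows from effectiveness by the same linearization, since an element acting trivially near $v$ acts trivially on the connected chart), and the ``analogous ray argument'', which works because the assignment of the group element is locally constant on the regular locus and constant along each ray segment, so $d_0\hat F=\mathrm{id}$ forces that element to be trivial; also note that the $G$-stability of your final $S$ uses that $S$ is a $G_v$-invariant subset of the $G$-stable $\tilde V$ with $G_{\tilde V}=G_v$, which you use implicitly and which is immediate.
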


\begin{proof}
Let $v\in V$ and set $v'\sceq \tilde f(v)$. Then $\pi(v) = \pi'(v')$. By
compatibility of orbifold charts there exist a restriction $(W,H,\chi)$ of $(V,G,\pi)$ with $v\in W$ and an open
embedding $\lambda \co (W,H,\chi) \to (V',G',\pi')$ such that $\lambda(v) = v'$. Lemma~\ref{SIN} yields an open $H$--stable neighborhood $S$ of $v$ with
$S\subseteq \tilde f^{-1}(\lambda(W))\cap W$.  Let 
\[ \tilde g \sceq \lambda^{-1}\circ \tilde f\vert_S \co S\to W\]
denote the induced lift of $\id_Q$. Since $\chi\circ\tilde g =   \chi$, \cite[Lemma~2.11]{Moerdijk_Mrcun} shows the existence of a unique $h\in H$ such that $\tilde g = h\vert_S$.  Thus $\tilde f\vert_S= \lambda\circ h\vert_S\co S\to \lambda(h(S))$ is a diffeomorphism. In turn 
\[\tilde f\vert_S\co (S, H_S, \chi\vert_S) \to (\tilde f(S),
G'_{\tilde f(S)}, \pi'\vert_{\tilde f(S)})\] 
is an isomorphism of orbifold charts.
\end{proof}

Not each local lift of the identity is a global diffeomorphism, as the following
example shows. 

\begin{example}\label{localliftnotdiffeom} 
Let $Q$ be the open annulus in $\R^2$ with inner radius $1$ and outer radius $2$ centered at the origin, \ie
\[ Q =\{ w\in \C \mid 1 < |w| < 2 \}. \]
The map $\alpha\co Q \to \C \times \R$,
\[ \alpha(w) \sceq \left( \frac{w^2}{|w|^2}, |w| -1 \right) \]
maps $Q$ onto the cylinder $Z \sceq S^1 \times (0,1)$. Note that $\alpha(Q)$ covers $Z$ twice.
Then the map $\beta\co Z\to \C$,
\[ \beta(z, s) \sceq \frac{2}{2-s}z\]
is the linear projection of $Z$ from the point $(0,2)\in \C\times \R$ to the complex plane. The composed map $\tilde f = \beta\circ\alpha\co Q\to \C$, 
\[ \tilde f(w) \sceq \frac{2w^2}{(3-|w|) |w|^2} \]
is smooth (where we consider $\C=\R^2$ as a  $2$-dimensional real manifold) and maps $Q$ onto $Q$. Further it induces a homeomorphism between $Q/\{\pm \id\}$ and $Q$. Hence, if we endow $Q$ with the orbifold atlas 
\[  \big\{ \big(Q, \{\pm \id \}, \tilde f\big), \big(Q,\{\id\}, \id \vphantom{\tilde f}\big) \big\},\]
then $\tilde f$ is a local lift of $\id_Q$ \wrt $(Q,\{\pm\id\},\tilde f)$ and $(Q,\{\id\}, \id)$ but not a global diffeomorphism.
\end{example}

\begin{prop}\label{extending} Let $(Q,\mc U)$ be an orbifold and $\{ \tilde f_i \}_{i\in I}$ a family of local lifts of $\id_Q$  which satisfies (\apref{R}{liftings}{}). Then there exists a pair $(P,\nu)$ such that $(\id_Q, \{\tilde f_i\}_{i\in I}, P, \nu)$ is a representative of an orbifold map on $(Q,\mc U)$. The pair $(P,\nu)$ is unique up to equivalence of representatives of orbifold maps.
\end{prop}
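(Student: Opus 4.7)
The central ingredient is Proposition~\ref{induceslifts}: each local lift $\tilde f_i$ of $\id_Q$ restricts on arbitrarily small $G_i$-stable neighborhoods to an isomorphism of orbifold charts onto a restriction of $(V'_i,G'_i,\pi'_i)$.  Hence $\psi \sceq \coprod_{i\in I}\tilde f_i$ is locally a diffeomorphism and its local inverses intertwine the projection maps.  The plan is to use this to define $\nu(\lambda)$ essentially by the formula $\psi\circ\lambda\circ\psi^{-1}$, modulo enough shrinking to actually present $\nu(\lambda)$ as an open embedding between two orbifold charts in $\mc U$.

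For existence, I would construct $(P,\nu)$ as follows.  Given $\lambda\co U\to U'$ in $\Psi(\mc V)$ with $U\subseteq V_i$ and $U'\subseteq V_j$, and a point $x\in U$, combine Proposition~\ref{induceslifts} with Lemma~\ref{SIN} to pick a $G_i$-stable neighborhood $S$ of $x$ with $S\subseteq U$ such that $\tilde f_i|_S$ is an isomorphism from $(S,(G_i)_S,\pi_i|_S)$ onto a restriction of $(V'_i,G'_i,\pi'_i)$, such that $\lambda(S)$ is $G_j$-stable, and such that $\tilde f_j|_{\lambda(S)}$ is an isomorphism onto a restriction of $(V'_j,G'_j,\pi'_j)$.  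Collect these restrictions $\lambda|_S$ (one per choice, adjusted to have distinct domain-codomain pairs) into $P$; it is a quasi-pseudogroup that generates $\Psi(\mc V)$.  Set
\[
\nu(\lambda|_S) \sceq \tilde f_j\circ\lambda|_S\circ\bigl(\tilde f_i|_S\bigr)^{-1},
\]
which is an open embedding between the two restricted orbifold charts above, both of which lie in $\mc U$ by maximality.  Axiom (\apref{R}{invariant}{}) is the defining equation, and (\apref{R}{welldefined}{}), (\apref{R}{mult_pg}{}), (\apref{R}{unit_pg}{}) follow from the fact that $\nu(\lambda)$ is built by conjugating $\lambda$ with the locally invertible map $\psi$, so that compositions, inverses, and germ-agreement are automatically respected.

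For uniqueness up to equivalence, suppose $(P_1,\nu_1)$ and $(P_2,\nu_2)$ both complete $(\id_Q,\{\tilde f_i\}_{i\in I})$ to a representative.  Given $\lambda_k\in P_k$ with $x\in\dom\lambda_1\cap\dom\lambda_2$ and $\germ_x\lambda_1=\germ_x\lambda_2$, restrict to a neighborhood of $x$ on which $\psi$ is a diffeomorphism (Proposition~\ref{induceslifts}).  The invariance axiom (\apref{R}{invariant}{}) then forces $\nu_k(\lambda_k)=\psi\circ\lambda_k\circ\psi^{-1}$ as germs at $\psi(x)$ for $k=1,2$, and since $\lambda_1=\lambda_2$ in a neighborhood of $x$, these germs coincide.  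This is precisely the condition in Definition~\ref{Pnu_equiv}.

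The main obstacle is technical rather than logical: one must ensure that $\nu(\lambda)$ is genuinely an open embedding \emph{between orbifold charts in $\mc U$}, not just a local diffeomorphism of open subsets of $V'=\coprod V'_i$.  Without the isomorphism-onto-a-restriction statement of Proposition~\ref{induceslifts} and the $G$-stable shrinking of Lemma~\ref{SIN}, one would only obtain a germ-level statement.  With these tools in hand, the restrictions over which $\nu(\lambda)$ is defined really are orbifold charts of $\mc U$ (by maximality), and the remaining axiom checks are bookkeeping.
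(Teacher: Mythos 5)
Your argument is correct and follows the same route as the paper, whose proof of this proposition is just the one-line observation that it follows from Proposition~\ref{induceslifts} together with (\apref{R}{invariant}{}): you construct $(P,\nu)$ by conjugating changes of charts with $\psi=\coprod_i\tilde f_i$ on $G$-stable neighborhoods where the lifts are chart isomorphisms, and you derive uniqueness from the fact that (\apref{R}{invariant}{}) pins down the germs of $\nu(\lambda)$ wherever $\psi$ is locally invertible. Your write-up simply makes explicit the bookkeeping (stable shrinking via Lemma~\ref{SIN}, maximality of $\mc U$) that the paper leaves implicit.
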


\begin{proof}
This follows immediately from Proposition~\ref{induceslifts} in combination with (\apref{R}{invariant}{}).
\end{proof}

\begin{prop}\label{samestructure}
Let $Q$ be a topological space and suppose that $\mathcal U$ and $\mathcal U'$
are orbifold structures on $Q$. Let 
\[ \hat f = \big( f, \{ \tilde f_i\}_{i\in I}, [P,\nu] \big) \]
be a charted orbifold map for which $f=\id_Q$, the domain atlas $\mathcal V$ is a representative of $\mathcal U$, the range family $\mathcal V'$, which here is an orbifold atlas, is a representative of $\mathcal U'$, and for each $i\in I$, the map $\tilde f_i$ is a local diffeomorphism.
Then $\mathcal U=\mathcal U'$.
\end{prop}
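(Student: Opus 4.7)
The plan is to show that $\mathcal{V}\cup\mathcal{V}'$ is an orbifold atlas on $Q$, which by the definition of orbifold structure immediately implies $\mathcal{U}=\mathcal{U}'$ (both atlases are then representatives of the same maximal atlas). Since pairs of charts within $\mathcal{V}$ and pairs within $\mathcal{V}'$ are already compatible by assumption, the work consists of establishing compatibility of every chart $(V_i,G_i,\pi_i)\in\mathcal{V}$ with every chart $(V'_j,G'_j,\pi'_j)\in\mathcal{V}'$. Note first that each $\tilde f_i$, being a local diffeomorphism, forces $\dim V_i = \dim V'_i$, so all charts have the same dimension.

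First I would handle the ``diagonal'' case $i=j$. Fix $x\in V_i$ and $y\in V'_i$ with $\pi_i(x)=\pi'_i(y)$. Since $\tilde f_i$ is a local lift of $\id_Q$, we have $\pi'_i(\tilde f_i(x))=\pi_i(x)=\pi'_i(y)$, so $\tilde f_i(x)$ and $y$ lie in the same $G'_i$-orbit; pick $g\in G'_i$ with $g\cdot \tilde f_i(x)=y$. Because $\tilde f_i$ is a local diffeomorphism, there is an open connected neighborhood $U$ of $x$ in $V_i$ on which $\tilde f_i|_U$ is a diffeomorphism onto its image. Then $h\sceq g\circ \tilde f_i|_U\co U\to g(\tilde f_i(U))\subseteq V'_i$ is a diffeomorphism with $h(x)=y$, and $\pi'_i\circ h = \pi'_i\circ \tilde f_i|_U=\pi_i|_U$. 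This is precisely a change of charts witnessing compatibility at $(x,y)$.

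For the ``off-diagonal'' case $i\neq j$, fix $x\in V_i$, $y\in V'_j$ with $\pi_i(x)=\pi'_j(y)$. I would use the compatibility of $(V'_i,G'_i,\pi'_i)$ and $(V'_j,G'_j,\pi'_j)$ inside $\mathcal{V}'$ at the pair $(y,\tilde f_i(x))$ (which satisfies $\pi'_j(y)=\pi_i(x)=\pi'_i(\tilde f_i(x))$, invoking the remark after Definition~\ref{orb_atlas2} to arrange the change of charts $k\co \widetilde{W}\to \widetilde{W}'$ from $V'_j$ to $V'_i$ with $k(y)=\tilde f_i(x)$ and $\pi'_i\circ k=\pi'_j|_{\widetilde W}$. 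Shrink $U\ni x$ from the previous step further so that $\tilde f_i(U)\subseteq \widetilde W'$. Then the map
\[
k^{-1}\circ \tilde f_i|_U\co U\longrightarrow k^{-1}(\tilde f_i(U))\subseteq V'_j
\]
is a diffeomorphism sending $x$ to $y$, and satisfies $\pi'_j\circ(k^{-1}\circ \tilde f_i|_U)=\pi'_i\circ \tilde f_i|_U=\pi_i|_U$, furnishing the required change of charts.

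The main obstacle is purely bookkeeping: one must carefully navigate Definition~\ref{orb_atlas2}'s requirement that a change of charts exists at \emph{every} pair $(x,y)$ with matching projections (not just at the particular pair $(x,\tilde f_i(x))$ naturally delivered by the local lift), which is handled in the diagonal case by exploiting the $G'_i$-action and in the off-diagonal case by composing with the already available change of charts inside $\mathcal{V}'$. Note that the quasi-pseudogroup $P$ and the map $\nu$ play no role here; only the fact that the $\tilde f_i$ are local diffeomorphisms covering $Q$ via $\pi'_i\circ \tilde f_i=\pi_i$ is used.
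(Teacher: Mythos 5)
Your proof is correct and follows essentially the same route as the paper: compose the local diffeomorphism $\tilde f_i$ (which satisfies $\pi'_i\circ\tilde f_i=\pi_i$) with a change of charts inside $\mathcal V'$ to obtain the required change of charts between $\mathcal V$ and $\mathcal V'$, so that $\mathcal V\cup\mathcal V'$ is an orbifold atlas. Your diagonal/off-diagonal split is merely an explicit unfolding of the self-compatibility of a chart (via the $G'_i$-action), which the paper's uniform argument uses implicitly.
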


\begin{proof}
Let $(V_i, G_i, \pi_i) \in\mathcal V$, $(V'_j, G'_j, \pi'_j) \in \mathcal V'$ and $x\in V_i$, $y\in V'_j$ such that $\pi_i(x) = \pi'_j(y)$. Since $\tilde f_i \co V_i \to V'_i$ is a local diffeomorphism, there are open neighborhoods $U$ of $x$ in $V_i$ and $U'$ of $\tilde f_i(x)$ in $V'_i$ such that $\tilde f_i\vert_U\co U\to U'$ is a diffeomorphism. We have
\[ \pi'_i\big(\tilde f_i(x)\big) = \pi_i(x) = \pi'_j(y).\]  
Therefore there exist open neighborhoods $W'_1$ of $\tilde f_i(x)$ in $U'$ and $W'_2$ of $y$ in $V'_j$ and a diffeomorphism $h\co W'_1\to W'_2$ satisfying $\pi'_j\circ h= \pi'_i$. Shrinking $U$ shows that $(V_i, G_i, \pi_i)$ and $(V'_j, G'_j, \pi'_j)$ are compatible. Thus $\mc U = \mc U'$.
\end{proof}

The following example shows that the requirement in Proposition~\ref{samestructure} that the local lifts be local diffeomorphisms is essential.

\begin{example}
Recall the orbifolds $(Q,\mc U_i)$, $i=1,2$, from Example~\ref{notcompatible}, the representatives $\mc V_1 \sceq \{V_1\}$ and $\mc V_2 \sceq \{V_2\}$ of $\mc U_1$ resp.\@ $\mc U_2$, and set $g(x) \sceq x^2$ for $x\in (-1,1)$. Then $g$ is a lift of $\id_Q$ \wrt $V_2$ and $V_1$. Further let 
\[
P\sceq \{ \pm \id_{(-1,1)}\}\quad\text{and}\quad \nu( \pm \id_{(-1,1)}) \sceq \id_{(-1,1)}.
\]
Then
$(\id_Q, \{g\}, [P,\nu])$ is an orbifold map with $(\mc V_2, \mc V_1)$ from $(Q,\mc U_2)$ to $(Q,\mc U_1)$, but $\mc U_1 \not= \mc U_2$.
\end{example}

Motivated by Propositions~\ref{extending} and \ref{samestructure} we make the following definition. 

\begin{defi}\label{liftiddef} 
Let $(Q,\mc U)$ be an orbifold and let $\hat f = (f, \{\tilde f_i\}_{i\in I}, [P,\nu])$ be a charted orbifold map whose domain atlas is a representative of $\mc U$. If and only if $f=\id_Q$ and $\tilde f_i$ is a local diffeomorphism for each $i\in I$, we call $\hat f$ a \textit{lift of the identity} $\id_{(Q,\mc U)}$ or a \textit{representative} of $\id_{(Q,\mc U)}$. The set of all lifts of  $\id_{(Q,\mc U)}$ is the \textit{identity morphism} $\id_{(Q,\mc U)}$ of $(Q,\mathcal U)$.
\end{defi}

\subsection{Composition of charted orbifold maps}

\begin{construction}\label{constr_comp}
Let $(Q,\mathcal U)$, $(Q',\mathcal U')$ and $(Q'',\mathcal U'')$ be orbifolds, and 
\begin{align*}
 \mc V \sceq \{ (V_i,G_i,\pi_i) \mid i\in I\},\qquad \mc V' \sceq \{ (V'_j,G'_j,\pi'_j) \mid j\in J \}
\end{align*}
resp.\@ $\mathcal V''$ be representatives for $\mathcal U$, $\mathcal U'$ resp.\@ $\mathcal U''$, where $\mc V$ resp.\@ $\mc V'$ are indexed by $I$ resp.\@ $J$. Suppose  that
\[ \hat f = (f, \{\tilde f_i\}_{i\in I}, [P_f,\nu_f]) \in \Orbmap(\mathcal V, \mathcal V')\]
and
\[ \hat g = (g, \{\tilde g_j\}_{j\in J}, [P_g,\nu_g]) \in \Orbmap(\mathcal V',\mathcal V'')\]
are charted orbifold maps and that $\alpha\co I \to J$ is the unique map such that for each $i\in I$, $\tilde f_i$ is a local lift of $f$ w.r.t.\@ $(V_i,G_i,\pi_i)$ and $(V'_{\alpha(i)}, G'_{\alpha(i)}, \pi'_{\alpha(i)})$.  The composition
\[ \hat g \circ \hat f \sceq \hat h = (h, \{\tilde h_i\}_{i\in I}, [P_h,\nu_h]) \in \Orbmap(\mathcal V,\mathcal V'')\]
is given by $h\sceq g\circ f$ and $\tilde h_i \sceq \tilde g_{\alpha(i)} \circ \tilde f_i$ for all $i\in I$. To construct a representative $(P_h,\nu_h)$ of $[P_h,\nu_h]$ we fix representatives $(P_f,\nu_f)$ and $(P_g,\nu_g)$ of $[P_f,\nu_f]$ and $[P_g,\nu_g]$, respectively. 
The leading idea to define $(P_h,\nu_h)$ is to take $P_h = P_f$ and $\nu_h = \nu_g \circ \nu_f$. But since $\nu_f(\lambda)$ is not necessarily in $P_g$ for $\lambda\in P_f$, the composition $\nu_g\circ\nu_f$ might be ill-defined. In the following we refine this idea.

Let $\mu \in P_f$ and suppose that $\dom\mu \subseteq V_i$ and $\cod\mu\subseteq V_j$ for the orbifold charts $(V_i,G_i,\pi_i)$ and $(V_j,G_j,\pi_j)$ in $\mathcal V$. By (\apref{R}{invariant}{})
\[ \tilde f_j \circ \mu = \nu_f(\mu) \circ \tilde f_i \vert_{\dom \mu},\]
where $\nu_f(\mu) \in \Psi(\mc U')$. By possibly shrinking domains, we may assume that $\nu_f(\mu) \in \Psi(\mc V')$. For $x\in\dom\mu$ we set $y_x \sceq \tilde f_i(x)$, which is an element of $\dom\nu_f(\mu)$. Hence we find (and fix a choice) $\xi_{\mu,x} \in P_g$ with $y_x\in \dom\xi_{\mu,x}$ and an open set $U'_{\mu,x} \subseteq \dom \xi_{\mu,x} \cap \dom \nu_f(\mu)$ such that $y_x \in U'_{\mu,x}$ and
\[
 \xi_{\mu,x}\vert_{U'_{\mu,x}} = \nu_f(\mu)\vert_{U'_{\mu,x}}.
\]
Then we find (and fix) an open set $U_{\mu,x} \subseteq \dom\mu$ with $x\in U_{\mu,x}$ such that $\tilde f_i(U_{\mu,x}) \subseteq U'_{\mu,x}$. By adjusting choices we achieve that for $\mu_1,\mu_2\in P_f$ and $x_1\in\dom\mu_1$, $x_2\in\dom\mu_2$ we either have
\begin{equation}\label{wll1}
\mu_1\vert_{U_{\mu_1,x_1}} \not= \mu_2\vert_{U_{\mu_2,x_2}}\quad\text{or}\quad \xi_{\mu_1,x_1} = \xi_{\mu_2,x_2}.
\end{equation}
Define
\[
 P_h\sceq \big\{ \mu\vert_{U_{\mu,x}} \big\vert\ \mu\in P_f,\ x\in\dom\mu \big\},
\]
which obviously is a quasi-pseudogroup generating $\Psi(\mc V)$, and set
\[
 \nu_h\big( \mu\vert_{U_{\mu,x}} \big) \sceq \nu_g(\xi_{\mu,x})
\]
for $\mu\vert_{U_{\mu,x}}\in P_h$. Property~\eqref{wll1} yields that $\nu_h$ is a well-defined map from $P_h$ to $\Psi(\mc U'')$. One 
easily sees that $\nu_h$ satisfies (\apref{R}{invariant}{}) - (\apref{R}{unit_pg}{}),
and that the equivalence class of $(P_h,\nu_h)$ does not depend on the choices we made for the construction of $P_h$ and $\nu_h$.
\end{construction}

\begin{remark}\label{F1_equivariant}
The construction of the composition of two charted orbifold maps  immediately implies that  the maps $F_1$ and $F_2$ (see Proposition~\ref{conclusion}) are both functorial.
\end{remark}

The following lemma provides the definition of induced charted orbifold map and shows its relation to lifts of the identity.

\begin{lemmadefi}\label{onlyinduced}
Let  $(Q,\mathcal U)$ and $(Q',\mathcal U')$ be orbifolds. Further let 
\begin{align*}
\mathcal V & = \{ (V_i, G_i, \pi_i) \mid i\in I \} \ \text{be a representative of $\mathcal U$, indexed by $I$,}
\\
\mathcal V' & = \{ (V'_l, G'_l, \pi'_l) \mid l\in L\} \ \text{be a representative of $\mathcal U'$, indexed by $L$,}
\\ 
\hat f & = \big( f, \{ \tilde f_i\}_{i\in I}, [P_f,\nu_f]  \big) \in \Orbmap(\mathcal V, \mathcal V'),
\end{align*}
and let $\beta\co I\to L$ be the unique map such that for each $i\in I$, $\tilde f_i$ is a local lift of $f$ w.r.t.\@ $(V_i,G_i,\pi_i)$ and $(V'_{\beta(i)}, G'_{\beta(i)}, \pi'_{\beta(i)})$.
Suppose that we have
\begin{itemize}
\item a representative $\mathcal W = \{ (W_j, H_j, \psi_j) \mid j\in J \}$ of $\mathcal U$, indexed by $J$,
\item a subset $\{ (W'_j, H'_j, \psi'_j) \mid j\in J \}$ of $\mc U'$, indexed by $J$ (not necessarily an orbifold atlas),
\item a map $\alpha\co J\to I$,
\item for each $j\in J$, an embedding
\[ \lambda_j \co \big(W_j, H_j, \psi_j\big) \to \big(V_{\alpha(j)}, G_{\alpha(j)}, \pi_{\alpha(j)}\big),\]
and an embedding
\[ \mu_j \co \big(W'_j, H'_j, \psi'_j\big) \to \big(V'_{\beta(\alpha(j))}, G'_{\beta(\alpha(j))}, \pi'_{\beta(\alpha(j))}\big)\]
such that 
\[ \tilde f_{\alpha(j)}\big( \lambda_j(W_j)\big) \subseteq \mu_j(W'_j).\]
\end{itemize}
For each $j\in J$ set
\[ \tilde h_j \sceq \mu_j^{-1} \circ \tilde f_{\alpha(j)}\circ \lambda_j \co W_j \to W'_j.\]
Then 
\begin{enumerate}[{\rm (i)}]
\item \label{firstident} $\eps\sceq \big(\id_Q, \{ \lambda_j\}_{j\in J}, [P_\eps,\nu_\eps]\big) \in \Orbmap(\mathcal W,\mathcal V)$ (with $[P_\eps,\nu_\eps]$ provided by Proposition~\ref{extending}) is a lift of $\id_{(Q,\mc U)}$.
\item \label{secondident} The set $\{ (W'_j, H'_j, \psi'_j)\mid j\in J\}$ and the family $\{\mu_j\}_{j\in J}$ can be extended to a representative
\[ \mathcal W' = \big\{ (W'_k, H'_k,\psi'_k) \ \big\vert\ k\in K \big\} \]
of $\mathcal U'$ and a family of embeddings $\{\mu_k\}_{k\in K}$ such that
\[ \eps'\sceq \big(\id_{Q'}, \{ \mu_k\}_{k\in K}, [P_{\eps'},\nu_{\eps'}]\big) \in \Orbmap(\mathcal W',\mathcal V') \]
(with $[P_{\eps'},\nu_{\eps'}]$ provided by Proposition~\ref{extending}) is a lift of the identity $\id_{(Q',\mc U')}$.
\item \label{mapind} There is a uniquely determined equivalence class $[P_h,\nu_h]$ such that 
\[ \hat h \sceq (f, \{\tilde h_j\}_{j\in J}, [P_h,\nu_h]) \in \Orbmap(\mathcal W,\mathcal W') \]
and such that the diagram
\[
\xymatrix{ 
& \mathcal V \ar[r]^{\hat f} & \mathcal V'
\\
\mathcal W \ar[ur]^{\eps} \ar[rrr]^{\hat h} &&& \mathcal W' \ar[ul]_{\eps'}
}
\]
commutes.
\end{enumerate}
We say that $\hat h$ is \emph{induced} by $\hat f$.
\end{lemmadefi}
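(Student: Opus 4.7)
Parts (i) and (ii) are bookkeeping applications of Proposition~\ref{extending}. For (i) I check the hypotheses of (\apref{R}{liftings}{}): each $\lambda_j$ is a local lift of $\id_Q$ because $\pi_{\alpha(j)}\circ\lambda_j=\psi_j$, the family $\mc W$ is an atlas of $\mc U$ by assumption, and each $\lambda_j$ is an open embedding and in particular a local diffeomorphism. Proposition~\ref{extending} then provides $(P_\eps,\nu_\eps)$, and Definition~\ref{liftiddef} identifies $\eps$ as a lift of $\id_{(Q,\mc U)}$. For (ii) I extend the family $\{(W'_j,H'_j,\psi'_j)\}_{j\in J}$ by disjointly adjoining the charts of $\mc V'$ (re-indexed so that the charts of the resulting $\mc W'$ are pairwise distinct) and setting $\mu_k:=\id$ on each adjoined chart. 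The enlarged $\mc W'$ contains $\mc V'$, hence covers $Q'$ and is a representative of $\mc U'$; all $\mu_k$ are open embeddings into charts of $\mc V'$, so Proposition~\ref{extending} again supplies $(P_{\eps'},\nu_{\eps'})$ and yields a lift $\eps'$ of $\id_{(Q',\mc U')}$.

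For part (iii) I work at the groupoid level via Proposition~\ref{conclusion} and the functoriality of $F_1$ recorded in Remark~\ref{F1_equivariant}. Write $\varphi:=F_1(\hat f)$, $\epsilon:=F_1(\eps)$, and $\epsilon':=F_1(\eps')$. Since $\eps$ and $\eps'$ are lifts of identities, Proposition~\ref{induceslifts} applied chart by chart shows that $\epsilon_0=\coprod_j\lambda_j$ and $\epsilon'_0=\coprod_k\mu_k$ are local diffeomorphisms, and consequently so are $\epsilon_1$ and $\epsilon'_1$. The goal is to construct a groupoid homomorphism $\varphi_h\co\Gamma(\mc W)\to\Gamma(\mc W')$ satisfying $\epsilon'\circ\varphi_h=\varphi\circ\epsilon$ and then set $\hat h:=F_2(\varphi_h)$. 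On objects, the identity $\mu_j\circ\tilde h_j=\tilde f_{\alpha(j)}\circ\lambda_j$ forces $(\varphi_h)_0=\coprod_j\tilde h_j$. On arrows, given $\germ_xg\in\Gamma(\mc W)_1$ with $x\in W_j$ and $g$ mapping a neighborhood of $x$ into $W_{j'}$, I push $g$ forward to $\tilde g:=\lambda_{j'}\circ g\circ\lambda_j^{-1}\in\Psi(\mc V)$, apply $\varphi_1$ to obtain a germ $\germ_{\mu_j(\tilde h_j(x))}\tilde g'$ in $\Gamma(\mc V')_1$, and pull back through $\mu_j,\mu_{j'}$ by setting $g_h:=\mu_{j'}^{-1}\circ\tilde g'\circ\mu_j$. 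A short computation using $\pi'\circ\tilde g'=\pi'$ on $\dom\tilde g'$ confirms $\psi'_{j'}\circ g_h=\psi'_j$, so $g_h\in\Psi(\mc W')$ locally; I then set $(\varphi_h)_1(\germ_xg):=\germ_{\tilde h_j(x)}g_h$.

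Well-definedness of $(\varphi_h)_1$ and its compatibility with source, target, multiplication, unit and inversion follow from the corresponding properties of $\varphi_1$, together with the fact that $\mu_{j'}$ is locally injective so $\tilde g'$ factors uniquely (on germs) through $\mu_{j'}$. By construction $\epsilon'_1\circ(\varphi_h)_1=\varphi_1\circ\epsilon_1$. Applying $F_2$ produces the desired $\hat h=(f,\{\tilde h_j\}_{j\in J},[P_h,\nu_h])\in\Orbmap(\mc W,\mc W')$: the construction in the proof of Proposition~\ref{inducedorbgr} guarantees that the local-lift component is indeed $\{\tilde h_j\}_{j\in J}$, and functoriality of $F_1$ transfers the commutativity $\epsilon'\circ\varphi_h=\varphi\circ\epsilon$ to $\eps'\circ\hat h=\hat f\circ\eps$. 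Uniqueness of $[P_h,\nu_h]$ then follows from Proposition~\ref{conclusion}: $\hat h$ is determined by $\varphi_h$, which is itself determined on all germs by the commutativity condition, since $\epsilon'_1$ is injective on germs.

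The main obstacle I anticipate is the coherent well-definedness of $(\varphi_h)_1$: the construction depends on the choice of transition $g$ representing the germ $\germ_xg$, on the choice of $\tilde g'\in\Psi(\mc V')$ representing $\varphi_1(\germ_{\lambda_j(x)}\tilde g)$, and implicitly on the chart index $j$ with $x\in W_j$ when the $W_j$ overlap. Making these ambiguities wash out on germs while simultaneously verifying the five groupoid-homomorphism identities is the technical heart of the argument; after that step, the remaining identifications are formal consequences of the bijection $F_1\leftrightarrow F_2$.
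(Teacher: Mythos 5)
Your proposal is correct in substance, and for the main part \eqref{mapind} it takes a genuinely different route from the paper. The paper stays entirely in local charts: it fixes a representative $(P_f,\nu_f)$, shrinks each change of charts $\gamma\in\Psi(\mc W)$ around each point of its domain, transports it to $\Psi(\mc V)$ via the $\lambda$'s, matches its germ with some $\beta_\gamma\in P_f$, and defines $\nu_h\big(\gamma\vert_{U_{\gamma,x}}\big)\sceq \mu_k^{-1}\circ\nu_f(\beta_\gamma)\circ\mu_j$ after a careful shrinking of domains, the quasi-pseudogroup $P_h$ being the collection of the chosen restrictions. You instead build the groupoid homomorphism $\varphi_h\co\Gamma(\mc W)\to\Gamma(\mc W')$ by conjugating $F_1(\hat f)$ with the embeddings $\lambda_j,\mu_j$ and set $\hat h\sceq F_2(\varphi_h)$, invoking Proposition~\ref{conclusion} and Remark~\ref{F1_equivariant}; this is legitimate (both results precede Lemma~\ref{onlyinduced}, so there is no circularity), and it buys cleaner well-definedness and a cleaner uniqueness statement, since germs absorb the choices of representatives and the chart index is unambiguous on the disjoint union $\Gamma(\mc W)_0$. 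What the paper's chart-level construction buys in exchange is the explicit formula for $\nu_h$, which is reused verbatim in the proof of Lemma~\ref{fortrans}; your construction yields the same germs, so nothing downstream is lost. Parts \eqref{firstident} and \eqref{secondident} are essentially as in the paper, except that the paper extends the range family iteratively, adding a chart of $\mc V'$ only over each still uncovered point, rather than adjoining all of $\mc V'$ at once.

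Two points to tighten. In \eqref{secondident}, if you adjoin all charts of $\mc V'$ you must discard any that literally coincide with some $(W'_j,H'_j,\psi'_j)$, since (\apref{R}{liftings}{}) forbids repeated charts in the family carrying the local lifts; this is exactly why the paper adds charts only over uncovered points. In the uniqueness argument, the arrow component of $F_1(\eps')$ need not be injective globally (two distinct units over points of different charts $W'_k, W'_l$ with the same $\mu$-image are sent to the same unit); what you actually need, and what holds because the $\mu$'s are embeddings, is injectivity on each set of arrows with prescribed source and target. Since any competing homomorphism with the same object map $\coprod_{j\in J}\tilde h_j$ sends $\germ_x\gamma$ to an arrow from $\tilde h_j(x)$ to $\tilde h_{j'}(\gamma(x))$, its arrow values lie in such sets, and your argument then goes through. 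Finally, do record (as in Proposition~\ref{orb_gr}) the routine verification that $(\varphi_h)_1$ is smooth in the germ topology, so that $\varphi_h$ is indeed a homomorphism of Lie groupoids.
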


\begin{proof}
\eqref{firstident} is clear by Proposition~\ref{induceslifts} and \ref{extending}. To show that \eqref{secondident} holds we construct one possible extension: Let 
\[ y\in Q' \setminus \bigcup_{j\in J} \psi'_j(W'_j).\]
Then there is a chart $(V', G',\pi') \in \mathcal V'$ such that $y\in\pi'(V')$. Extend the set 
\[
\{ (W'_j, H'_j,\psi'_j) \mid j\in J\}
\]
with $(V',G',\pi')$ and the family $\{ \mu_j\}_{j\in J}$ with $\id_{V'}$. If this is done iteratively, one finally gets an orbifold atlas of $Q'$ as wanted. Then Proposition~\ref{induceslifts} and \ref{extending} yield the remaining claim of \eqref{secondident}.  The following considerations are independent of the specific choices of extensions.
Concerning \eqref{mapind} we remark that each $\tilde h_j$ is obviously a local lift of $f$. Fix a representative $(P_f,\nu_f)$ of $[P_f,\nu_f]$. In the following we construct a pair $(P_h,\nu_h)$ for which $\hat h$ is an orbifold map and the diagram in \eqref{mapind} commutes. It will be clear from the construction that the equivalence class $[P_h,\nu_h]$ is independent of the choice of $(P_f,\nu_f)$ and uniquely determined by the requirement of the commutativity of the diagram. Let $\gamma\in \Psi(\mathcal W)$ and $x\in\dom\gamma$. Possibly shrinking the domain of $\gamma$, we may assume that $\dom \gamma \subseteq W_j$ and $\cod \gamma \subseteq W_k$ for some $j,k\in J$. 
In the following we further shrink the domain of $\gamma$ to be able to define $\nu_h$ as a composition of $\nu_f$ with elements of $\{\mu_j\}_{j\in J}$. Let $y \sceq \lambda_j(x)$. Since
\[ \tilde\gamma\sceq \lambda_k \circ \gamma\circ \left(\lambda_j\vert_{\dom\gamma} \right)^{-1} \co \lambda_j(\dom\gamma) \to \lambda_k(\cod\gamma) \]
is an element of $\Psi(\mathcal V)$, we find $\beta_\gamma \in P_f$ such that $y\in\dom\beta_\gamma$ and $\germ_y\beta_\gamma = \germ_y\tilde \gamma$. Then 
\[ z\sceq \tilde f_{\alpha(j)}(y) \in \dom\nu_f(\beta_\gamma) \cap \mu_j(W'_j).\]
Since
\[
 \nu_f(\beta_\gamma)(z) = \tilde f_{\alpha(k)}(\beta_\gamma(y)) \in \mu_k(W'_k),
\]
the set
\[ U' \sceq \dom\nu_f(\beta_\gamma) \cap \mu_j(W'_j) \cap \nu_f(\beta_\gamma)^{-1}(\mu_k(W'_k)) \]
is an open neighborhood of $z$. Define
\[ U_1 \sceq \{ w\in\dom\beta_\gamma \cap \lambda_j(\dom\gamma) \mid \germ_w\beta_\gamma = \germ_w\tilde\gamma \},\]
which is an open neighborhood of $y$. Then also 
\[ U \sceq U_1 \cap \tilde f_{\alpha(j)}^{-1}(U') \]
is an open neighborhood of $y$. We fix an open neighborhood $U_{\gamma,x}$ of $x$ in $\lambda_j^{-1}(U)$. Further we suppose that for $\gamma_1,\gamma_2\in \Psi(\mc W)$, $x_1\in\dom\gamma_1$, $x_2\in\dom\gamma_2$,  we either have
\begin{equation}\label{wll2}
\gamma_1\vert_{U_{\gamma_1,x_1}} \not= \gamma_2\vert_{U_{\gamma_2,x_2}} \quad\text{or}\quad \nu_f(\beta_{\gamma_1}) = \nu_f(\beta_{\gamma_2}).
\end{equation}
Then we define
\[
 P_h \sceq \big\{ \gamma\vert_{U_{\gamma,x}} \big\vert\ \gamma\in\Psi(\mc W),\ x\in\dom\gamma \big\}
\]
and set
\[
 \nu_h\big(\gamma\vert_{U_{\gamma,x}}\big) \sceq \mu_k^{-1}\circ \nu_f(\beta_\gamma) \circ \mu_j
\]
for $\gamma\vert_{U_{\gamma,x}} \in P_h$ with $x\in W_j$ and $\gamma(x) \in W_k$ ($j,k\in J$). The map $\nu_h\co P_h\to \Psi(\mc W')$ is well-defined by \eqref{wll2}. One easily checks that $(P_h,\nu_h)$ satisfies all requirements of \eqref{mapind}.
\end{proof}

We consider two charted orbifold maps as equivalent if they induce the same
charted orbifold map on common refinements of the orbifold atlases. The following
definition provides a precise specification of this idea.

\begin{defi}\label{mapequiv}
Let $(Q,\mathcal U)$ and $(Q',\mathcal U')$ be orbifolds. Further let $\mathcal V_1, \mathcal V_2$ be representatives of $\mathcal U$, and $\mathcal V'_1, \mathcal V'_2$ be representatives of $\mathcal U'$. Suppose that $\hat f_1\in \Orbmap(\mathcal V_1,\mathcal V'_1)$ and $\hat f_2\in \Orbmap(\mathcal V_2,\mathcal V'_2)$. We call $\hat f_1$ and $\hat f_2$ \textit{equivalent} ($\hat f_1 \sim \hat f_2$) if there are
a representative $\mathcal W$ of $\mathcal U$, a representative $\mathcal W'$ of $\mathcal U'$, $\eps_1\in \Orbmap(\mathcal W,\mathcal V_1)$, $\eps_2\in \Orbmap(\mathcal W,\mathcal V_2)$ lifts of $\id_{(Q,\mc U)}$, $\eps'_1\in \Orbmap(\mathcal W',\mathcal V'_1)$, $\eps'_2\in \Orbmap(\mathcal W',\mathcal V'_2)$ lifts of $\id_{(Q',\mc U')}$, and a map $\hat h\in \Orbmap(\mathcal W,\mathcal W')$ 
such that the diagram
\[
\xymatrix{
& \mathcal V_1 \ar[r]^{\hat f_1} & \mathcal V'_1
\\
\mathcal W \ar[ur]^{\eps_1} \ar[rrr]^{\hat h} \ar[dr]_{\eps_2} &&& \mathcal W' \ar[ul]_{\eps'_1} \ar[dl]^{\eps'_2}
\\
& \mathcal V_2 \ar[r]_{\hat f_2} & \mathcal V'_2
}
\]
commutes.
\end{defi}

Proposition~\ref{mapwell} below shows that $\sim$ is indeed an equivalence
relation. For its proof we need the following two lemmas. The first lemma discusses
how local lifts  which belong to the same charted orbifold map are related to
each other. The second lemma shows that two charted orbifold maps which are induced
from the same charted orbifold map induce the same charted orbifold map on
common refinements of orbifold atlases. This means that $\sim$ satisfies the so-called
diamond property.

\begin{lemma}\label{welldefinedll}
Let $(Q,\mathcal U)$ and $(Q',\mathcal U')$ be orbifolds and let 
\[ \hat f \sceq (f, \{ \tilde f_i \}_{i\in I}, [P,\nu]) \in \Orbmap(\mathcal V, \mathcal V')\]
be a charted orbifold map where $\mathcal V$ is a representative of $\mathcal U$ and $\mathcal V'$ one of $\mathcal U'$. Suppose that we have orbifold charts $(V_a, G_a, \pi_a), (V_b, G_b, \pi_b) \in \mathcal V$ and points $x_a\in V_a$, $x_b\in V_b$ such that $\pi_a(x_a) = \pi_b(x_b)$. Then there are arbitrarily small orbifold charts $(W,K,\chi)\in \mathcal U$, $(W',K',\chi')\in\mathcal U'$ and embeddings
\begin{align*}
\lambda & \co (W,K,\chi) \to (V_a, G_a, \pi_a)
\\
\lambda' & \co (W',K',\chi') \to (V'_a, G'_a, \pi'_a)
\\
\mu & \co (W,K,\chi) \to (V_b, G_b,\pi_b)
\\
\mu' & \co (W', K',\chi') \to (V'_b, G'_b, \pi'_b)
\end{align*}
with $x_a\in \lambda(W)$ and $x_b\in \mu(W)$ such that the induced lift $\tilde g$ of $f$ \wrt $\tilde f_a, \lambda, \lambda'$ coincides with the one induced by $\tilde f_b$, $\mu$, $\mu'$. In other words, the diagram
\[
\xymatrix{
& V_a \ar[r]^{\tilde f_a} & V'_a 
\\
W \ar[ur]^{\lambda} \ar[rrr]^{\tilde g} \ar[dr]_{\mu} &&& W' \ar[ul]_{\lambda'} \ar[dl]^{\mu'}
\\
& V_b \ar[r]^{\tilde f_b} & V'_b
}
\]
commutes.
\end{lemma}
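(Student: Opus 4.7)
The plan is to transport the compatibility of the source charts $(V_a, G_a, \pi_a)$ and $(V_b, G_b, \pi_b)$, both of which lie in $\mathcal V$, through an element $\sigma \in P$ and its image $\nu(\sigma)$ over to the range charts $(V'_a, G'_a, \pi'_a)$ and $(V'_b, G'_b, \pi'_b)$. The four maps $\lambda,\mu,\lambda',\mu'$ will then arise naturally as inclusions and restrictions of $\sigma$ and $\nu(\sigma)$, and commutativity of the diagram will reduce to property~(\apref{R}{invariant}{}).

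First I apply compatibility of the two source orbifold charts together with $\pi_a(x_a) = \pi_b(x_b)$ to produce a change of charts $\rho \in \Psi(\mathcal V)$ from a neighborhood of $x_a$ in $V_a$ to a neighborhood of $x_b$ in $V_b$ with $\rho(x_a) = x_b$. Since $P$ generates $\Psi(\mathcal V)$ by~(\apref{R}{P}{}), there is $\sigma \in P$ with $x_a \in \dom \sigma$ and an open $U$ satisfying $x_a \in U \subseteq \dom \rho \cap \dom \sigma$ and $\sigma\vert_U = \rho\vert_U$. Using Lemma~\ref{SIN} I shrink $U$ to a connected $G_a$-stable open neighborhood of $x_a$ sitting inside $V_a$, so that $\sigma(U) \subseteq V_b$ and $\sigma(x_a) = x_b$. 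Property~(\apref{R}{invariant}{}) applied to $\sigma$ yields $\tilde f_b \circ \sigma\vert_U = \nu(\sigma)\circ \tilde f_a\vert_U$, which in particular gives $\tilde f_a(U) \subseteq \dom \nu(\sigma)$ and $\nu(\sigma)(\tilde f_a(U)) \subseteq V'_b$.

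Next, I take $W'$ to be a small connected $G'_a$-stable open neighborhood of $\tilde f_a(x_a)$ inside $\dom \nu(\sigma) \cap V'_a$ (again by Lemma~\ref{SIN}), then shrink $U$ further so that $W \sceq U$ satisfies $\tilde f_a(W) \subseteq W'$, and equip $(W,K,\chi)$ and $(W',K',\chi')$ with their restriction structures $K \sceq (G_a)_W$, $\chi \sceq \pi_a\vert_W$ and $K' \sceq (G'_a)_{W'}$, $\chi' \sceq \pi'_a\vert_{W'}$. I take $\lambda,\lambda'$ as inclusions and set $\mu \sceq \sigma\vert_W$, $\mu' \sceq \nu(\sigma)\vert_{W'}$; all four are open embeddings of orbifold charts. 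The lift induced from $\tilde f_a, \lambda, \lambda'$ is then $\tilde f_a\vert_W$, while the one from $\tilde f_b, \mu, \mu'$ equals $\nu(\sigma)^{-1}\circ \tilde f_b\circ \sigma\vert_W = \nu(\sigma)^{-1}\circ \nu(\sigma)\circ \tilde f_a\vert_W = \tilde f_a\vert_W$ by~(\apref{R}{invariant}{}), so the two coincide. Arbitrary smallness follows from the freedom to shrink $U$.

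The delicate point is that $\nu(\sigma)$ is, a priori, an open embedding between two orbifold charts in $\mathcal U'$ that need not be tied to $(V'_a, G'_a, \pi'_a)$ and $(V'_b, G'_b, \pi'_b)$; to form $\lambda'\co W'\hookrightarrow V'_a$ and $\mu'\co W'\to V'_b$ one must realize $\nu(\sigma)$ as a change of charts between suitable open subsets of $V'_a$ and $V'_b$. This is arranged by choosing $\sigma$ so that locally $\dom \sigma \subseteq V_a$ and $\cod \sigma \subseteq V_b$ (achieved on the connected set $U$ above), and then combining~(\apref{R}{invariant}{}) with compatibility of orbifold charts in $\mathcal U'$ to replace the domain and codomain charts of $\nu(\sigma)$ by compatible restrictions lying inside $V'_a$ and $V'_b$, respectively. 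Once this identification is in place, the assembly of the diagram reduces to the routine check above.
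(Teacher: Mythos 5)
Your proof is correct and follows essentially the same route as the paper's: produce a change of charts sending $x_a$ to $x_b$, replace it near $x_a$ by an element $\sigma$ of $P$ using that $P$ generates $\Psi(\mathcal V)$, and then use (\apref{R}{invariant}{}) to take $\lambda,\lambda'$ to be inclusions, $\mu=\sigma\vert_W$ and $\mu'$ a restriction of $\nu(\sigma)$, so that both induced lifts equal $\tilde f_a\vert_W$. The ``delicate point'' you flag is treated in the paper simply by shrinking the domain of $\nu(\sigma)$ so that its codomain lies in $V'_b$ while it still contains $\tilde f_a(W)$; your additional care with $G$--stable neighborhoods is harmless.
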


\begin{proof}
By compatibility of orbifold charts we find an arbitrarily small restriction $(W,K,\chi)$ of $(V_a, G_a,\pi_a)$ with $x_a\in W$ and an embedding 
\[
\mu\co (W,K,\chi) \to (V_b, G_b, \pi_b)
\]
such that $\mu(x_a) = x_b$. Then $\mu\co W\to \mu(W)$ is an element of $\Psi(\mathcal V)$. Fix a representative $(P,\nu)$ of $[P,\nu]$. Hence there is $\gamma\in P$ with $x_a\in \dom\gamma$ and an open neighborhood $U$ of $x_a$ such that $U\subseteq \dom \gamma \cap W$ and
\[ \mu\vert_U = \gamma\vert_U.\]
W.l.o.g.\@, $\gamma = \mu$. Property~(\apref{R}{invariant}{}) yields that
\[ \nu(\mu)\circ \tilde f_a\vert_W = \tilde f_b\circ\mu.\]
By shrinking the domain of $\nu(\mu)$, we can achieve that $\cod \nu(\mu)
\subseteq V'_b$ and still $\tilde f_a(W)\subseteq \dom \nu(\mu) \seqc W'$. With
$\mu'\sceq \nu(\mu)$ it follows 
\[ \tilde f_b (\mu(W)) = \mu'(\tilde f_a(W)) \subseteq \mu'(W') \] 
and further
\[ \tilde f_a\vert_W = (\mu')^{-1}\circ \tilde f_b \circ \mu.\]
This proves the claim.
\end{proof}

\begin{lemma}\label{fortrans}
Let $(Q,\mathcal U)$ and $(Q',\mathcal U')$ be orbifolds, $\mathcal V$ a
representative of $\mathcal U$, and $\mathcal V'$ one of $\mathcal U'$. Further
let $\hat f \in \Orbmap(\mathcal V,\mathcal V')$. Suppose that 
 $\hat h \in \Orbmap(\mathcal W_1, \mathcal W'_1)$ and
$\hat g \in \Orbmap(\mathcal W_2, \mathcal W'_2)$ are both induced by $\hat f$. Then
we find a representative $\mathcal W$ of $\mathcal U$ and charted orbifold
maps $\eps_1 \in \Orbmap(\mathcal W, \mathcal W_1)$, $\eps_2\in \Orbmap(\mathcal
W,\mathcal W_2)$ which are lifts of $\id_{(Q,\mc U)}$, and a representative $\mathcal W'$
of $\mathcal U'$ and charted orbifold maps $\eps'_1 \in \Orbmap(\mathcal
W',\mathcal W'_1)$, $\eps'_2\in \Orbmap(\mathcal W',\mathcal W'_2)$ which are
lifts of $\id_{(Q',\mc U')}$, and a charted orbifold map $\hat k \in \Orbmap(\mathcal
W,\mathcal W')$ such that the diagram
\[
\xymatrix{
 & \mathcal W_1 \ar[r]^{\hat h} & \mathcal W'_1
\\
\mathcal W \ar[ur]^{\eps_1} \ar[rrr]^{\hat k}  \ar[dr]_{\eps_2} &&& \mathcal W' \ar[ul]_{\eps'_1} \ar[dl]^{\eps'_2}
\\
& \mathcal W_2 \ar[r]^{\hat g} & \mathcal W'_2
}
\]
commutes. If the orbifolds are second-countable, we can choose $\mc W, \mc W'$ to be countable.
\end{lemma}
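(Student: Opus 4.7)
The plan is to first build a common refinement $\mathcal W$ of $\mathcal W_1$ and $\mathcal W_2$ on the domain side, then build a common refinement $\mathcal W'$ of $\mathcal W'_1$ and $\mathcal W'_2$ on the range side whose construction is coordinated with both inductions, and finally to assemble $\hat k$ from the common induction data. To begin, I would unpack the inducing data for $\hat h$ and $\hat g$ provided by Lemma~\ref{onlyinduced}: $\hat h$ comes with an index map $\alpha_1\co J_1\to I$ and, for each $j\in J_1$, open embeddings $\lambda_j^{(1)}\co (W_j,H_j,\psi_j)\to (V_{\alpha_1(j)},\ldots)$ and $\mu_j^{(1)}\co (W'_j,H'_j,\psi'_j)\to (V'_{\alpha_1(j)},\ldots)$ satisfying $\tilde h_j=(\mu_j^{(1)})^{-1}\circ\tilde f_{\alpha_1(j)}\circ\lambda_j^{(1)}$, and symmetrically for $\hat g$ with analogous data bearing the label $(2)$.

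For the domain refinement, I would exploit compatibility of orbifold charts in $\mc U$: for each $q\in Q$, pick charts of $\mathcal W_1$ and $\mathcal W_2$ uniformising a neighborhood of $q$, apply Definition~\ref{orb_atlas2} together with Lemma~\ref{SIN} to produce a chart $(W,K,\chi)\in\mc U$ and stably chosen open embeddings $\tau_1,\tau_2$ from it into a chart of $\mathcal W_1$ resp.\@ of $\mathcal W_2$, then complete to a full atlas $\mathcal W$ of $\mc U$ by the extension procedure of Lemma~\ref{onlyinduced}\eqref{secondident}. Proposition~\ref{extending} then furnishes $\eps_1\in \Orbmap(\mathcal W,\mathcal W_1)$ and $\eps_2\in \Orbmap(\mathcal W,\mathcal W_2)$ as lifts of $\id_{(Q,\mc U)}$ whose local lifts are $\tau_1$ and $\tau_2$.

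The central technical step is the range side. On a chart $(W,K,\chi)$ of $\mathcal W$ constructed above, the two local lifts of $f$ obtained from $\hat h\circ\eps_1$ and $\hat g\circ\eps_2$ are
\[
 (\mu_j^{(1)})^{-1}\circ\tilde f_{\alpha_1(j)}\circ\lambda_j^{(1)}\circ\tau_1 \quad\text{and}\quad (\mu_k^{(2)})^{-1}\circ\tilde f_{\alpha_2(k)}\circ\lambda_k^{(2)}\circ\tau_2,
\]
that is, two local lifts of the \emph{same} charted orbifold map $\hat f$ read in different charts of $\mathcal V$ and $\mathcal V'$. Lemma~\ref{welldefinedll} applied to $\hat f$ then yields, after shrinking $(W,K,\chi)$ if necessary, a chart $(W',K',\chi')\in\mc U'$ and open embeddings of $(W',K',\chi')$ into $(W_j'^{(1)},\ldots)$ and $(W_k'^{(2)},\ldots)$ making both expressions equal to a single local lift $\tilde k\co W\to W'$. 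Iterating over enough $q\in Q$ and extending as in Lemma~\ref{onlyinduced}\eqref{secondident} yields a representative $\mathcal W'$ of $\mc U'$, and Proposition~\ref{extending} supplies $\eps'_1\in\Orbmap(\mathcal W',\mathcal W'_1)$ and $\eps'_2\in\Orbmap(\mathcal W',\mathcal W'_2)$ as lifts of $\id_{(Q',\mc U')}$.

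To conclude, I would define $\hat k\in\Orbmap(\mathcal W,\mathcal W')$ with local lifts $\{\tilde k\}$ and pseudogroup data $[P_k,\nu_k]$ built from $(\hat h,\eps_1,\eps'_1)$ exactly as in the proof of Lemma~\ref{onlyinduced}\eqref{mapind}; performing the same construction with $(\hat g,\eps_2,\eps'_2)$ produces \emph{a priori} a different representative, but by the range-side coordination of the previous step and the well-definedness condition \eqref{wll2} used in that proof, the germs delivered by both procedures agree, so both yield the same equivalence class and hence both triangles of the diamond commute by construction. The hard part is precisely this coordination: Lemma~\ref{welldefinedll} gives only local chart-by-chart control, and one has to choose the refinements and the restrictions of embeddings uniformly enough that the germs of transitions assembled through $\hat h$ and through $\hat g$ agree in the sense of Definition~\ref{Pnu_equiv}, rather than merely being equivalent on overlapping pieces.
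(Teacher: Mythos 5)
Your overall route is the same as the paper's: construct, point by point, common refinement charts on which the lifts induced through $\hat h$ and through $\hat g$ coincide (this is exactly the paper's use of Lemma~\ref{welldefinedll} applied to $\hat f$ through the inducing data), extend to atlases $\mathcal W$, $\mathcal W'$, obtain the lifts of identity from Proposition~\ref{extending}, and then argue that the two charted orbifold maps produced by Lemma~\ref{onlyinduced}\eqref{mapind} from $(\hat h,\eps_1,\eps'_1)$ and from $(\hat g,\eps_2,\eps'_2)$ have the same class $[P,\nu]$. (A minor organizational point: you fix $\mathcal W$ and $\eps_1,\eps_2$ before the range-side step and then shrink charts ``if necessary''; the paper avoids this by carrying out the whole per-point construction at once and only then assembling $\mathcal W$.)

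The genuine gap is the last step, which you yourself flag as ``the hard part'' and then settle by assertion. Coordinating the local lifts $\tilde k_q$ does not coordinate the transition data: equality of the two classes $[P_1,\nu_1]$ and $[P_2,\nu_2]$ is a germ-level statement about $\nu_1,\nu_2$ in the sense of Definition~\ref{Pnu_equiv}, and neither the statement of Lemma~\ref{welldefinedll} (which says nothing about how its range-side embedding is related to $\nu_f$) nor condition \eqref{wll2} delivers it --- \eqref{wll2} only makes each individual $\nu_h$ in Lemma~\ref{onlyinduced} well defined and says nothing about comparing the two different inductions. What is actually needed, and what the paper supplies, is an explicit normalization: take $\xi_{1,q}=\id$, $\chi_{1,q}=\id$, pick $\gamma\in P_f$ whose germ equals that of the domain-side transition $\lambda_{2,\beta_2(q)}\circ\xi_{2,q}\circ\lambda_{1,\beta_1(q)}^{-1}$, use (\apref{R}{invariant}{}) to see that $\nu_f(\gamma)\circ\tilde f_{\beta_1(q)}$ and the given range-side transition agree near the relevant point, and then \emph{redefine} $\chi_{2,q}$ so that $\chi_{2,q}=\mu_{2,\alpha_2(\beta_2(q))}^{-1}\circ\nu_f(\gamma)\circ\mu_{1,\alpha_1(\beta_1(q))}$, i.e.\@ equation \eqref{nubeta}. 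Only with this specific choice in hand can one compute, for an arbitrary change of charts $\gamma\in\Psi(\mathcal W)$, the transitions $\beta_h,\beta_g\in\Psi(\mathcal V)$ through which $\nu_1(\gamma)$ and $\nu_2(\gamma)$ are defined, and use the germ-multiplicativity (\apref{R}{mult_pg}{}) of $\nu_f$ to cancel the conjugating factors and conclude $\nu_2(\gamma)=\nu_1(\gamma)$ as germs. Without \eqref{nubeta} and this final computation the two inductions need not produce equivalent pairs, so your proof as written does not yet establish the commutativity of the whole diamond.
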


\begin{proof}
Suppose that $\hat f  = (f, \{ \tilde f_a\}_{a\in A}, [P_f,\nu_f])$, $\hat h = (f, \{\tilde h_i\}_{i\in I}, [P_h,\nu_h])$ and $\hat g = (f, \{\tilde g_j\}_{j\in J}, [P_g,\nu_g])$. Let
\begin{align*}
\mathcal W_1 & \sceq \{ (W_{1,i}, H_{1,i}, \psi_{1,i} ) \mid i \in I \}, \text{ indexed by $I$,}
\\
\mathcal W'_1 & \sceq \{ (W'_{1,k}, H'_{1,k}, \psi'_{1,k}) \mid k\in K \}, \text{ indexed by $K$,}
\\
\mathcal W_2 & \sceq \{ (W_{2,j}, H_{2,j}, \psi_{2,j}) \mid j\in J \}, \text{ indexed by $J$,}
\\
\mathcal W'_2 & \sceq \{ (W'_{2,l}, H'_{2,l}, \psi'_{2,l}) \mid l\in L \}, \text{ indexed by $L$,}
\end{align*}
and let $\alpha_1\co I\to K$ resp.\@ $\alpha_2\co J\to L$ be the map such that for each $i\in I$, $\tilde h_i$ is a local lift of $f$ w.r.t.\@ $(W_{1,i}, H_{1,i}, \psi_{1,i})$ and $(W'_{1,\alpha_1(i)}, H'_{1,\alpha_1(i)}, \psi'_{1,\alpha_1(i)})$ resp.\@ for each $j\in J$, $\tilde g_j$ is a local lift of $f$ w.r.t.\@ $(W_{2,j}, H_{2,j}, \psi_{2,j})$ and $(W'_{2,\alpha_2(j)}, H'_{2,\alpha_2(j)}, \psi'_{2,\alpha_2(j)})$.
Further let
\begin{align*}
\delta_1 & = (\id_Q, \{ \lambda_{1,i} \}_{i\in I}, [R_1,\sigma_1]) \in \Orbmap(\mathcal W_1,\mathcal V),
\\
\delta_2 & = (\id_Q, \{ \lambda_{2,j} \}_{j\in J}, [R_2,\sigma_2]) \in \Orbmap(\mathcal W_2,\mathcal V)
\intertext{be lifts of $\id_{(Q,\mc U)}$ and}
\delta'_1 & = (\id_{Q'}, \{ \mu_{1,k} \}_{k\in K}, [R'_1,\sigma'_1]) \in \Orbmap(\mathcal W'_1, \mathcal V'),
\\
\delta'_2 & = (\id_{Q'}, \{ \mu_{2,l} \}_{l\in L}, [R'_2, \sigma'_2]) \in \Orbmap(\mathcal W'_2, \mathcal V')
\end{align*}
be lifts of  $\id_{(Q',\mc U')}$ such that $\hat f \circ\delta_1 = \delta'_1\circ \hat h$ and $\hat f\circ\delta_2 = \hat g\circ\delta'_2$.
W.l.o.g.\@ we assume that all $\lambda_{1,i}$, $\mu_{1,k}$, $\lambda_{2,j}$ and $\mu_{2,l}$ are embeddings.
We will use Lemma~\ref{onlyinduced} to show the existence of $\hat k$. More precisely, we attach to each $q\in Q$ an orbifold chart $(W_q,H_q,\psi_q) \in \mc U$ with $q\in \psi_q(W_q)$ and an orbifold chart $(W'_q, H'_q, \psi'_q) \in \mc U'$ with $f(q) \in \psi'_q(W'_q)$. We consider orbifold charts defined for distinct $q$ to be distinct. In this way, we get a representative
\begin{equation}\label{defW}
\mc W \sceq \{ (W_q,H_q,\psi_q) \mid q\in Q\}                                                                                                                                                                                                                                                                                                                                                                                             
\end{equation}                                                                                                                                                                                                                                                                                                                                                                                                
of $\mc U$ which is indexed by $Q$, and a subset $\{ (W'_q,H'_q,\psi'_q) \mid q\in Q\}$ of $\mc U'$,  indexed by $Q$ as well. Moreover, we will find maps $\beta_1 \co Q \to I$ and $\beta_2\co Q\to J$ and embeddings
\begin{align*}
\xi_{1,q} & \co \big(W_q,H_q,\psi_q\big) \to \big(W_{1,\beta_1(q)}, H_{1,\beta_1(q)}, \psi_{1,\beta_1(q)}\big)
\\
\xi_{2,q} & \co \big(W_q,H_q,\psi_q\big) \to \big(W_{2,\beta_2(q)}, H_{2,\beta_2(q)}, \psi_{2,\beta_2(q)}\big)
\\
\chi_{1,q} & \co \big(W'_q,H'_q,\psi'_q\big) \to \big(W'_{1,\alpha_1(\beta_1(q))}, H'_{1,\alpha_1(\beta_1(q))}, \psi'_{1,\alpha_1(\beta_1(q))}\big)
\\
\chi_{2,q} & \co \big(W'_q,H'_q,\psi'_q\big) \to \big(W'_{2,\alpha_2(\beta_2(q))}, H'_{2,\alpha_2(\beta_2(q))}, \psi'_{2,\alpha_2(\beta_2(q))}\big)
\end{align*}
such that for each $q\in Q$ the local lift $\tilde k_q$ of $f$ induced by $\tilde h_{\beta_1(q)}$, $\xi_{1,q}$ and $\chi_{1,q}$ coincides with the one induced by $\tilde g_{\beta_2(q)}$, $\xi_{2,q}$ and $\chi_{2,q}$. Then Lemma~\ref{onlyinduced} shows that $\hat h$ resp.\@ $\hat g$ induces a charted orbifold map $(f, \{\tilde k_q\}_{q\in Q}, [P_1,\nu_1])$ resp.\@ $(f,\{\tilde k_q\}_{q\in Q}, [P_2,\nu_2])$. It then remains to show that we can choose all the embeddings $\xi_{1,q}, \xi_{2,q}, \chi_{1,q}, \chi_{2,q}$ in a way that $[P_1,\nu_1]$ equals $[P_2,\nu_2]$. 

Let $q\in Q$. We fix $i\in I$ such that $q\in \psi_{1,i}(W_{1,i})$ and we pick $w_1\in W_{1,i}$ with $q=\psi_{1,i}(w_1)$.
We set $\beta_1(q) \sceq i$. Further we fix $j\in J$ such that $q\in \psi_{2,j}(W_{2,j})$, and pick an element $w_2\in W_{2,j}$ with
$q=\psi_{2,j}(w_2)$. We set $\beta_2(q) \sceq j$. By Lemma~\ref{welldefinedll} we find orbifold charts $(W_q, H_q, \psi_q) \in \mathcal U$ with $q\in
\psi_q(W_q)$, say $q=\psi_q(w_q)$, and $(W'_q,H'_q,\psi'_q)\in\mathcal U'$ with
$f(q) \in \psi'_q(W'_q)$ and embeddings $\xi_{1,q}$, $\xi_{2,q}$, $\chi_{1,q}$,
$\chi_{2,q}$ with $w_1=\xi_{1,q}(w_q)$, $w_2=\xi_{2,q}(w_q)$, and a local lift $\tilde k_q$ of $f$ such that the diagram
\[
\xymatrix{ 
& \lambda_{1,\beta_1(q)}\big(W_{1,\beta_1(q)}\big) \ar[r]^{\tilde f_{\beta_1(q)}} & \mu_{1,\alpha_1(\beta_1(q))}\big(W'_{1,\alpha_1(\beta_1(q))}\big)
\\
& W_{1,\beta_1(q)} \ar[u]^{\lambda_{1,\beta_1(q)}} \ar[r]^{\tilde h_{\beta_1(q)}} & W'_{1,\alpha_1(\beta_1(q))} \ar[u]_{\mu_{1,\alpha_1(\beta_1(q))}}
\\
W_q \ar[ur]^{\xi_{1,q}} \ar[dr]_{\xi_{2,q}} \ar[rrr]^{\tilde k_q} &&& W'_q \ar[ul]_{\chi_{1,q}} \ar[dl]^{\chi_{2,q}}
\\
& W_{2,\beta_2(q)} \ar[r]^{\tilde g_{\beta_2(q)}} \ar[d]_{\lambda_{2,\beta_2(q)}} & W'_{2,\alpha_2(\beta_2(q))} \ar[d]^{\mu_{2,\alpha_2(\beta_2(q))}}
\\
& \lambda_{2,\beta_2(q)}\big(W_{2,\beta_2(q)}\big) \ar[r]^{\tilde f_{\beta_2(q)}} & \mu_{2,\alpha_2(\beta_2(q))}\big(W'_{2,\alpha_2(\beta_2(q))}\big)
}
\]
commutes. We may assume that $\xi_{1,q}=\id$ and $\chi_{1,q}=\id$. Now
\[ \eta\sceq\lambda_{2,\beta_2(q)}\circ\xi_{2,q}\circ\lambda_{1,\beta_1(q)}^{-1} \co \lambda_{1,\beta_1(q)}(W_q) \to \lambda_{2,\beta_2(q)}\big(\xi_{2,q}(W_q)\big) \]
is an element of $\Psi(\mathcal V)$ with $y\sceq \lambda_{1,\beta_1(q)}(\xi_{1,q}(w_q))$ in its domain. 
We pick a representative $(P_f,\nu_f)$ of $[P_f,\nu_f]$. Then there is an element $\gamma\in P_f$ with $y\in\dom\gamma$ and an open neighborhood $U$ of $y$ such that $U\subseteq \dom\gamma\cap\dom\eta$ and $\eta\vert_U = \gamma\vert_U$.
By (\apref{R}{invariant}{}),
\[
 \nu_f(\gamma)\circ \tilde f_{\beta_1(q)}\vert_U = \tilde f_{\beta_2(q)}\circ\gamma\vert_U = \tilde f_{\beta_2(q)}\circ \eta\vert_U.
\]
The map
\[
 \mu \sceq \mu_{2,\alpha_2(\beta_2(q))} \circ \chi_{2,q} \circ \mu^{-1}_{1,\alpha_1(\beta_1(q))} \co \mu_{1,\alpha_1(\beta_1(q))}(W'_q)\to \mu_{2,\alpha_2(\beta_2(q))}\big(\chi_{2,q}(W'_q)\big)
\]
is a diffeomorphism as well. Further there exists an open neighborhood $V$ of $y$ such that 
\[
 \tilde f_{\beta_2(q)}\circ \eta\vert_V = \mu\circ \tilde f_{\beta_1(q)}\vert_V.
\]
Hence 
\[
 \nu_f(\gamma)\circ\tilde f_{\beta_1(q)} = \mu \circ \tilde f_{\beta_1(q)}
\]
on some neighborhood of $y$. Therefore, after possibly shrinking $W_q$, we can redefine $W'_q$, $\chi_{2,q}$ and $\tilde k_q$  such that 
\begin{equation}\label{nubeta}
\chi_{2,q} = \mu_{2,\alpha_2(\beta_2(q))}^{-1} \circ \nu_f(\gamma) \circ \mu_{1,\alpha_1(\beta_1(q))}\vert_{W'_q}.
\end{equation}
We remark that this redefinition might be quite serious if $\tilde f_{\beta_1(q)}$ and hence $\tilde h_{\beta_1(q)}$, $\tilde g_{\beta_2(q)}$ and $\tilde f_{\beta_2(q)}$ are highly non-injective. But since these maps all behave in the same way, we may perform the changes without running into problems. Let $\mc W$ be defined by \eqref{defW}. Lemma~\ref{onlyinduced}, more precisely its proof, shows that $\hat h$ resp.\@ $\hat g$ induces the orbifold maps 
\[
 \hat k_1 = (f, \{\tilde k_q\}_{q\in Q}, [P_1,\nu_1]) \quad\text{resp.}\quad \hat k_2 = (f, \{\tilde k_q\}_{q\in Q}, [P_2,\nu_2])
\]
with $(\mc W,\mc W')$, where $\mc W'$ is a representative of $\mc U'$ which contains the set 
\[
\{ (W'_q,H'_q,\psi'_q) \mid q\in Q\}
\]
(the proof of Lemma~\ref{onlyinduced} shows that we can indeed have the same $\mc W'$ for $\hat k_1$ and $\hat k_2$).

It remains to show that $[P_1,\nu_1] = [P_2,\nu_2]$. Recall from Lemma~\ref{onlyinduced} that $[P_1,\nu_1]$ is uniquely determined by $\hat h$, $\{\xi_{1,q}\}_{q\in Q}$ and $\{\chi_{1,q}\}_{q\in Q}$, and analogously for $[P_2,\nu_2]$. Alternatively, we may consider $\hat k_1$ and $\hat k_2$ to be induced by $\hat f$. Thus, $[P_1,\nu_1]$ is uniquely determined by $\hat f$, $\{\lambda_{1,\beta_1(q)}\circ\xi_{1,q}\}_{q\in Q}$ and $\{\mu_{1,\alpha_1(\beta_1(q))}\circ \chi_{1,q}\}_{q\in Q}$, and $[P_2,\nu_2]$ is uniquely determined by $\hat f$, $\{\lambda_{2,\beta_2(q)}\circ\xi_{2,q}\}_{q\in Q}$ and $\{\mu_{2,\alpha_2(\beta_2(q))}\circ\chi_{2,q}\}_{q\in Q}$. We fix a representative $(P_f,\nu_f)$ of $[P_f,\nu_f]$. Let $\gamma$ be a change of charts in $\Psi(\mathcal W)$ and $x\in \dom \gamma$. Suppose $\dom\gamma \subseteq W_p$ and $\cod\gamma\subseteq W_q$. Using the same arguments and notation as in the proof of Lemma~\ref{onlyinduced} (without discussing the necessary shrinking of domains, since we are only 
interested in equality in a neighborhood of $x$) we have
\begin{align*}
\beta_h & =\lambda_{1,\beta_1(q)}\circ \gamma\circ\lambda_{1,\beta_1(p)}^{-1},
\\
\beta_g & = \lambda_{2,\beta_2(q)}\circ\xi_{2,q}\circ \gamma\circ\xi_{2,p}^{-1}\circ \lambda_{2,\beta_2(p)}^{-1},
\\
\nu_1(\gamma) &= \mu_{1,\alpha_1(\beta_1(q))}^{-1}\circ\nu_f(\beta_h) \circ \mu_{1,\alpha_1(\beta_1(p))},
\\
\nu_2(\gamma) &= \chi_{2,q}^{-1}\circ\mu_{2,\alpha_2(\beta_2(q))}^{-1}\circ\nu_f(\beta_g) \circ \mu_{2,\alpha_2(\beta_2(p))}\circ \chi_{2,p}.
\end{align*}
Hence
\[ \beta_g= \lambda_{2,\beta_2(q)}\circ\xi_{2,q}\circ\lambda_{1,\beta_1(q)}^{-1}\circ\beta_h\circ\lambda_{1,\beta_1(p)} \circ \xi_{2,p}^{-1}\circ\lambda_{2,\beta_2(p)}^{-1}.\]
Definition~\eqref{nubeta} shows that 
\[
 \nu_f( \lambda_{2,\beta_2(q)} \circ \xi_{2,q} \circ \xi^{-1}_{1,q} \circ \lambda^{-1}_{1,\beta_1(q)}) = \mu_{2,\alpha_2(\beta_2(q))} \circ \chi_{2,q} \circ \mu^{-1}_{1,\alpha_1(\beta_1(q))}.
\]
Then
\begin{align*}
\nu_2(\gamma) & = \mu^{-1}_{1,\alpha_1(\beta_1(q))} \circ \nu_f(\beta_h)\circ \mu_{1,\alpha_1(\beta_1(p))} = \nu_1(\gamma).
\end{align*}

Hence the induced equivalence classes $[P_1,\nu_1]$ and $[P_2,\nu_2]$ indeed
coincide. The lift $\eps_1$ of $\id_{(Q,\mc U)}$ is given by the family $\{ \xi_{1,q}
\}_{q\in Q}$, the lift $\eps_2$ by $\{\xi_{2,q}\}_{q\in Q}$, the lift $\eps'_1$
of $\id_{(Q',\mc U')}$ is any extension of $\{ \chi_{1,q}\}_{q\in Q}$, and the
lift $\eps'_2$ is any extension of $\{ \chi_{2,q}\}_{q\in Q}$.

If the orbifolds are second-countable, we find countable subfamilies of $\mc W$ and $\mc W'$ which are themselves orbifold atlases and restricted to which the orbifold map $\hat k$ still satisfies the statement of the lemma. Alternatively, we could have started the construction with an appropriate countable subset of $Q$.
\end{proof}

The following proposition now follows immediately.

\begin{prop}\label{mapwell}
The relation $\sim$ from Definition~\ref{mapequiv} is an equivalence relation. 
\end{prop}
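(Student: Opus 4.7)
The plan is to verify the three axioms of an equivalence relation, handling reflexivity and symmetry quickly and then using Lemma~\ref{fortrans} to do the real work in transitivity.

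For reflexivity, given $\hat f \in \Orbmap(\mathcal V, \mathcal V')$, I would take $\mathcal W \sceq \mathcal V$, $\mathcal W' \sceq \mathcal V'$, $\hat h \sceq \hat f$, and let $\eps_1 = \eps_2$ be the charted orbifold map with family of local lifts $\{\id_{V_i}\}_{i\in I}$ (completed to a representative using Proposition~\ref{extending}), which is a lift of $\id_{(Q,\mc U)}$ by Definition~\ref{liftiddef}; similarly for $\eps'_1 = \eps'_2$. The defining diamond then commutes by direct inspection. Symmetry is immediate from the symmetry of the diamond in Definition~\ref{mapequiv}: swapping the roles of the subscripts $1$ and $2$ in the witnessing data converts a witness of $\hat f_1 \sim \hat f_2$ into a witness of $\hat f_2 \sim \hat f_1$.

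The main step is transitivity. Suppose $\hat f_1 \sim \hat f_2$ with witness $(\mathcal W_a, \mathcal W'_a, \eps_{1,a}, \eps_{2,a}, \eps'_{1,a}, \eps'_{2,a}, \hat h_a)$ and $\hat f_2 \sim \hat f_3$ with witness $(\mathcal W_b, \mathcal W'_b, \eps_{2,b}, \eps_{3,b}, \eps'_{2,b}, \eps'_{3,b}, \hat h_b)$. The sub-diagrams $\hat f_2 \circ \eps_{2,a} = \eps'_{2,a} \circ \hat h_a$ and $\hat f_2 \circ \eps_{2,b} = \eps'_{2,b} \circ \hat h_b$ say precisely that both $\hat h_a$ and $\hat h_b$ are induced by $\hat f_2$ in the sense of Lemma and Definition~\ref{onlyinduced}. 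Applying Lemma~\ref{fortrans} to $\hat h_a$ and $\hat h_b$ yields a representative $\mathcal W$ of $\mc U$, a representative $\mathcal W'$ of $\mc U'$, lifts of the identity $\delta_a \in \Orbmap(\mc W, \mc W_a)$, $\delta_b \in \Orbmap(\mc W, \mc W_b)$ of $\id_{(Q,\mc U)}$, lifts $\delta'_a \in \Orbmap(\mc W', \mc W'_a)$, $\delta'_b \in \Orbmap(\mc W', \mc W'_b)$ of $\id_{(Q', \mc U')}$, and a charted orbifold map $\hat k \in \Orbmap(\mathcal W, \mathcal W')$ such that $\hat h_a \circ \delta_a = \delta'_a \circ \hat k$ and $\hat h_b \circ \delta_b = \delta'_b \circ \hat k$.

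I would then form the compositions $\eps_1 \sceq \eps_{1,a} \circ \delta_a$, $\eps_3 \sceq \eps_{3,b} \circ \delta_b$, $\eps'_1 \sceq \eps'_{1,a} \circ \delta'_a$ and $\eps'_3 \sceq \eps'_{3,b} \circ \delta'_b$. Each is a lift of the appropriate identity: the underlying continuous map is the composition $\id \circ \id = \id$, and in each local chart the composition of local diffeomorphisms (given by Construction~\ref{constr_comp}) is again a local diffeomorphism, so the conditions of Definition~\ref{liftiddef} are met. Pasting the two given diamonds for $\hat f_1 \sim \hat f_2$ and $\hat f_2 \sim \hat f_3$ together with the diamond coming from Lemma~\ref{fortrans}, and invoking associativity/functoriality of composition of charted orbifold maps (Remark~\ref{F1_equivariant}), I obtain
\[
\hat f_1 \circ \eps_1 = \hat f_1 \circ \eps_{1,a} \circ \delta_a = \eps'_{1,a} \circ \hat h_a \circ \delta_a = \eps'_{1,a} \circ \delta'_a \circ \hat k = \eps'_1 \circ \hat k,
\]
and similarly $\hat f_3 \circ \eps_3 = \eps'_3 \circ \hat k$. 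Thus $(\mathcal W, \mathcal W', \eps_1, \eps_3, \eps'_1, \eps'_3, \hat k)$ witnesses $\hat f_1 \sim \hat f_3$.

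The only delicate point I expect is verifying that the composed maps $\eps_1, \eps_3, \eps'_1, \eps'_3$ really qualify as lifts of the identity; this is where it matters that Construction~\ref{constr_comp} composes local lifts pointwise via composition of the $\tilde f_i$'s, so being a local diffeomorphism is preserved. Everything else is a diagram chase enabled directly by Lemma~\ref{fortrans}.
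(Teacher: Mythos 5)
Your proposal is correct and follows essentially the same route as the paper: the substantive step is transitivity, obtained by observing that the two middle maps are both induced by $\hat f_2$, invoking Lemma~\ref{fortrans} to produce a common refinement, and then using that compositions of lifts of the identity are again lifts of the identity. Your explicit treatment of reflexivity and symmetry (which the paper leaves implicit) and your check that composing local diffeomorphisms preserves the lift-of-identity property are both fine.
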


The equivalence class of a charted orbifold map $\hat f$ with respect to the equivalence from Definition~\ref{mapequiv} is denoted by $[\hat f]$. It will always be clear from context whether $\hat f$ is a charted orbifold map and $[\hat f]$ denotes an equivalence class of charted orbifold maps, or $\hat f$ is a representative of an orbifold map and $[\hat f]$ denotes an equivalence class of representatives, that is, a charted orbifold map (cf.\@ Definition~\ref{Pnu_equiv}).

\subsection{The orbifold category}\label{redorbcat}

Now we can define the category of reduced orbifolds. 

\begin{defi}\label{defredorbcat} The category $\Orbmap$ of reduced (paracompact resp.\@ second-countable) orbifolds is defined as follows: 
In both cases, the class of objects is the class of orbifolds. For two paracompact orbifolds $(Q,\mc U)$
and $(Q',\mc U')$, the morphisms (\textit{orbifold maps}) from $(Q,\mathcal U)$ to $(Q',\mathcal U')$ are
the equivalence classes $[\hat f]$ of all charted orbifold maps $\hat f \in
\Orbmap(\mathcal V, \mathcal V')$ where $\mathcal V$ is any representative of
$\mathcal U$, and $\mathcal V'$ is any representative of $\mathcal U'$, that is
\begin{align*}
 \Morph\big( (Q,\mathcal U), (Q',\mathcal U') \big) \sceq
 \left\{ \big[\hat f\big] \left\vert\   
\begin{minipage}{5cm}
$\hat f\in \Orbmap(\mathcal V, \mathcal V')$,\ $\mathcal V$ repr.\@ of
$\mathcal U$,\\ $\mathcal V'$ representative of $\mathcal U'$ 
\end{minipage} 
\right.\right\}.
\end{align*}
For second-countable orbifolds we restrict to countable representatives of the orbifold structures. We now describe the composition in $\Orbmap$. For this let 
\[
[\hat f] \in \Morph\big( (Q,\mathcal U), (Q',\mathcal U')\big)\quad\text{and}\quad[\hat g] \in \Morph\big(
(Q',\mathcal U'), (Q'',\mathcal U'')\big)
\]
be orbifold maps. Choose representatives $\hat f\in
\Orbmap(\mathcal V, \mathcal V')$ of $[\hat f]$ and $\hat g\in \Orbmap(\mathcal
W',\mathcal W'')$ of $[\hat g]$. Then find (countable, if the orbifolds are second-countable) representatives $\mathcal K$,
$\mathcal K'$, $\mathcal K''$ of $\mathcal U$, $\mathcal U'$, $\mathcal U''$,
\resp, and lifts of identity $\eps\in \Orbmap(\mathcal K, \mathcal V)$,
$\eps'_1\in \Orbmap(\mathcal K',\mathcal V')$, $\eps'_2\in \Orbmap(\mathcal
K',\mathcal W')$, $\eps'' \in \Orbmap(\mathcal K'', \mathcal W'')$ and two charted orbifold maps $\hat h \in \Orbmap(\mathcal K, \mathcal K')$, $\hat k \in
\Orbmap(\mathcal K',\mathcal K'')$ such that the diagram
\[
\xymatrix{ 
& \mathcal V \ar[r]^{\hat f} & \mathcal V' && \mathcal W' \ar[r]^{\hat g} & \mathcal W''
\\
\mathcal K \ar[ur]^{\eps} \ar[rrr]^{\hat h} &&& \mathcal K' \ar[ul]_{\eps'_1} \ar[ur]^{\eps'_2} \ar[rrr]^{\hat k} &&& \mathcal K'' \ar[ul]_{\eps''}
}
\]
commutes. The composition of $[\hat g]$ and $[\hat f]$ is defined to be \[ [\hat g] \circ
[\hat f] \sceq [ \hat k \circ \hat h].\]
\end{defi}

The following lemma shows that this composition is always possible, Proposition~\ref{compositiongood} below that it is well-defined.

\begin{lemma} \label{compwd}
Let $(Q,\mathcal U)$, $(Q',\mathcal U')$ and $(Q'',\mathcal U'')$ be orbifolds.
Further let $\mathcal V$ be a representative of $\mathcal U$, $\mathcal V'$ and
$\mathcal W'$ be representatives of $\mathcal U'$, and $\mathcal W''$ a
representative of $\mathcal U''$. Suppose that $\hat f \in \Orbmap(\mathcal
V,\mathcal V')$ and $\hat g\in\Orbmap(\mathcal W',\mathcal W'')$. Then there exist representatives $\mathcal K$ of $\mathcal U$,
$\mathcal K'$ of $\mathcal U'$, $\mathcal K''$ of $\mathcal U''$, lifts of the
respective identities $\eps \in \Orbmap(\mathcal K,\mathcal V)$, $\eta_1 \in
\Orbmap(\mathcal K',\mathcal V')$, $\eta_2\in \Orbmap(\mathcal K',\mathcal W')$,
$\delta\in \Orbmap(\mathcal K'',\mathcal W'')$, and charted orbifold maps $\hat
h\in \Orbmap(\mathcal K,\mathcal K')$, $\hat k\in \Orbmap(\mathcal K',\mathcal
K'')$ such that the diagram
\[
\xymatrix{ 
& \mathcal V \ar[r]^{\hat f} & \mathcal V' && \mathcal W' \ar[r]^{\hat g} & \mathcal W''
\\
\mathcal K \ar[rrr]^{\hat h} \ar[ur]^{\eps} &&& \mathcal K' \ar[rrr]^{\hat k} \ar[ul]_{\eta_1} \ar[ur]^{\eta_2} &&& \mathcal K'' \ar[ul]_{\delta}
}
\]
commutes. For second-countable orbifolds we can choose $\mc K, \mc K'$ and $\mc K''$ to be countable.
\end{lemma}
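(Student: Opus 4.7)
The plan is to construct $\mathcal K'$ as an atlas of $\mathcal U'$ which simultaneously refines $\mathcal V'$ (yielding $\eta_1$) and $\mathcal W'$ (yielding $\eta_2$), and then to apply Lemma~\ref{onlyinduced} twice: once to $\hat f$ producing $(\mathcal K,\hat h)$ with $\hat f\circ\eps = \eta_1\circ\hat h$, and once to $\hat g$ producing $(\mathcal K'',\hat k)$ with $\hat g\circ\eta_2 = \delta\circ\hat k$. The delicate point is to choose the range family for the first application (charts in $\mathcal U'$ embedded in $\mathcal V'$) in such a way that each of its members is simultaneously embedded in $\mathcal W'$, so that $\mathcal K'$ itself can serve as the domain refinement in the second application.

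Write $\hat f = (f,\{\tilde f_i\}_{i\in I},[P_f,\nu_f])$ and $\hat g = (g,\{\tilde g_j\}_{j\in J},[P_g,\nu_g])$ with associated index maps $\beta$ (to the indices of $\mathcal V'$) and $\gamma$ (to the indices of $\mathcal W''$). For every $p\in Q$, choose $\alpha(p)\in I$ and $x_p\in V_{\alpha(p)}$ over $p$, set $y_p\sceq \tilde f_{\alpha(p)}(x_p)\in V'_{\beta(\alpha(p))}$, pick a chart $(W'_{d(p)},H'_{d(p)},\chi'_{d(p)})\in\mathcal W'$ with $z_p\in W'_{d(p)}$ over $f(p)$, set $u_p\sceq \tilde g_{d(p)}(z_p)\in W''_{\gamma(d(p))}$, and pick an orbifold chart $(K''_p,L''_p,\psi''_p)\in\mathcal U''$ around $u_p$ with an open embedding $\delta_p$ into $W''_{\gamma(d(p))}$. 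Compatibility of the orbifold charts $(V'_{\beta(\alpha(p))},G'_{\beta(\alpha(p))},\pi'_{\beta(\alpha(p))})$ and $(W'_{d(p)},H'_{d(p)},\chi'_{d(p)})$ in $\mathcal U'$, together with Lemma~\ref{SIN} and Remark~\ref{groupiso}, furnishes a $G$-stable open neighborhood $\tilde V'$ of $y_p$, an $H$-stable open neighborhood $\tilde W'$ of $z_p$, and a diffeomorphism $h_p\co\tilde V'\to\tilde W'$ satisfying $\chi'_{d(p)}\circ h_p = \pi'_{\beta(\alpha(p))}\vert_{\tilde V'}$ and $h_p(y_p)=z_p$; shrinking $\tilde V'$ (and correspondingly $\tilde W'$), I arrange that $\tilde g_{d(p)}(\tilde W')\subseteq \delta_p(K''_p)$. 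Let $(K'_p,L'_p,\psi'_p)$ be the orbifold chart with underlying manifold $\tilde V'$, equipped with the inclusion $\eta_{1,p}\co K'_p\hookrightarrow V'_{\beta(\alpha(p))}$ and with $\eta_{2,p}\sceq h_p\co K'_p\to W'_{d(p)}$ as open embeddings of orbifold charts. Finally, pick a $G$-stable neighborhood $K_p$ of $x_p$ in $V_{\alpha(p)}$ with $\tilde f_{\alpha(p)}(K_p)\subseteq K'_p$, and take $\lambda_p$ to be its inclusion.

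Put $\mathcal K\sceq\{(K_p,M_p,\sigma_p)\mid p\in Q\}$ (with the induced restriction data), and extend the family $\{(K'_p,L'_p,\psi'_p)\mid p\in Q\}$ to an atlas $\mathcal K'$ of $\mathcal U'$ by adjoining, for every uncovered point of $Q'$, an orbifold chart admitting open embeddings into both $\mathcal V'$ and $\mathcal W'$ (possible by compatibility) and attaching to each additional chart an associated range chart in $\mathcal U''$ embedded into $\mathcal W''$; then complete the totality of range charts to an atlas $\mathcal K''$ of $\mathcal U''$. By Proposition~\ref{extending} the families $\{\lambda_p\}$, $\{\eta_{1,*}\}$, $\{\eta_{2,*}\}$ and $\{\delta_*\}$ promote to lifts of identity $\eps,\eta_1,\eta_2,\delta$. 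Now Lemma~\ref{onlyinduced} applied to $\hat f$ with the data so chosen yields $\hat h\in\Orbmap(\mathcal K,\mathcal K')$ with $\hat f\circ\eps = \eta_1\circ\hat h$, and applied to $\hat g$ with domain refinement $\mathcal K'$, refining embeddings $\{\eta_{2,*}\}$ and range embeddings $\{\delta_*\}$, yields $\hat k\in\Orbmap(\mathcal K',\mathcal K'')$ with $\hat g\circ\eta_2 = \delta\circ\hat k$; the diagram of the lemma then commutes. The only genuinely non-trivial issue is the simultaneous shrinking required to squeeze each $K'_p$ between the two continuity constraints $\tilde f_{\alpha(p)}(K_p)\subseteq K'_p$ and $\tilde g_{d(p)}(\eta_{2,p}(K'_p))\subseteq \delta_p(K''_p)$; once this is arranged everything else reduces to bookkeeping with compatibility of orbifold charts and Lemma~\ref{onlyinduced}.
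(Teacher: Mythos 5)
Your proposal is correct and follows essentially the same route as the paper: reduce everything to Lemma~\ref{onlyinduced} applied twice, and build $\mathcal K'$ pointwise over $Q$ as a common refinement of $\mathcal V'$ and $\mathcal W'$ via changes of charts, with $\mathcal K$ obtained afterwards by continuity of the local lifts of $f$. The ``delicate'' simultaneous-shrinking issue you flag is in fact sequential (shrink $K'_p$ first, then choose $K_p$), and the paper sidesteps even that by taking $K''_q$ to be the full chart $W''_{\nu(\gamma(q))}$ with the identity embedding, so that the containment $\tilde g(\varrho_q(K'_q))\subseteq\sigma_q(K''_q)$ holds automatically.
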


\begin{proof} Let $\hat f  = \big(f, \{ \tilde f_i\}_{i\in I}, [P_f,\nu_f] \big)$ and $\hat g = \big(g, \{ \tilde g_j\}_{j\in J}, [P_g,\nu_g] \big)$.
Suppose that 
\begin{align*}
\mathcal V & = \{ (V_i, G_i, \pi_i) \mid i\in I\}, \text{ indexed by $I$,}
\\
\mathcal V' & = \{ (V'_c, G'_c, \pi'_c) \mid c\in C\}, \text{ indexed by $C$,}
\\
\mathcal W' & = \{ (W'_j, H'_j, \psi'_j) \mid j\in J\}, \text{ indexed by $J$,}
\\
\mathcal W'' & = \{ (W''_d, H''_d, \psi''_d) \mid d\in D \}, \text{ indexed by $D$.}
\end{align*}
Let $\tau\co I\to C$ be the map such that for each $i\in I$, $\tilde f_i$ is a local lift of $f$ w.r.t.\@ $(V_i,G_i,\pi_i)$ and $(V'_{\tau(i)}, G'_{\tau(i)}, \pi'_{\tau(i)})$, and $\nu\co J\to D$ the map such that for each $j\in J$, $\tilde g_j$ is a local lift of $g$ w.r.t.\@ $(W'_j, H'_j, \psi'_j)$ and $(W''_{\nu(j)}, H''_{\nu(j)}, \psi''_{\nu(j)})$. By Lemma~\ref{onlyinduced} it suffices to find
\begin{itemize}
\item a representative $\mathcal K = \{ (K_a, L_a, \chi_a) \mid a\in A\}$ of $\mathcal U$, indexed by $A$,
\item a representative $\mathcal K' = \{ (K'_b, L'_b, \chi'_b) \mid b\in B\}$ of $\mathcal U'$, indexed by $B$,
\item a subset $\{ (K''_b, L''_b, \chi''_b) \mid b\in B\}$ of $\mc U''$, indexed by (the same set) $B$,
\item a map $\alpha\co A\to I$,
\item an injective map $\beta\co A\to B$,
\item for each $a\in A$, an embedding
\[ \lambda_a \co (K_a, L_a, \chi_a) \to (V_{\alpha(a)}, G_{\alpha(a)}, \pi_{\alpha(a)}) \]
and an embedding
\[ \mu_a \co (K'_{\beta(a)}, L'_{\beta(a)}, \chi'_{\beta(a)}) \to (V'_{\tau(\alpha(a))}, G'_{\tau(\alpha(a))}, \pi'_{\tau(\alpha(a))})\]
such that 
\[ \tilde f_{\alpha(a)}\big( \lambda_a(K_a) \big)\subseteq \mu_a
(K'_{\beta(a)}), \]
\item a map $\gamma\co B \to J$,
\item for each $b\in B$, an embedding
\[ \varrho_b \co (K'_b, L'_b, \chi'_b) \to (W'_{\gamma(b)}, H'_{\gamma(b)}, \psi'_{\gamma(b)}) \]
and an embedding
\[ \sigma_b\co (K''_b, L''_b, \chi''_b) \to (W''_{\nu(\gamma(b))}, H''_{\nu(\gamma(b))}, \psi''_{\nu(\gamma(b))}) \]
such that 
\[ \tilde g_{\gamma(b)}\big( \varrho_b(K'_b) \big) \subseteq \sigma_b(K''_b).\]
\end{itemize}
Let $q\in Q$ and set $r\sceq f(q)$. We fix $i\in I$ and $j\in J$ such that $q\in \pi_i(V_i)$ and $r\in \psi'_j(W'_j)$. Further we choose $v'\in V'_{\tau(i)}$ and $w'\in W'_j$ such that $\pi'_{\tau(i)}(v') = r = \psi'_j(w')$. By compatibility of orbifold charts we find a restriction $(K'_q, L'_q, \chi'_q)$ of $(V'_{\tau(i)}, G'_{\tau(i)}, \pi'_{\tau(i)})$ with $v'\in K'_q$ and an embedding 
\[ \varrho_q\co (K'_q, L'_q, \chi'_q) \to (W'_j, H'_j, \psi'_j). \]
Since $\tilde f_i$ is continuous, there is a restriction $(K_q, L_q, \chi_q)$ of $(V_i, G_i, \pi_i)$ such that $q\in \chi_q(K_q)$ and $\tilde f_i(K_q) \subseteq K'_q$. We set 
\[
(K''_q, L''_q, \chi''_q) \sceq (W''_j, H''_j, \psi''_j).
\]
We consider orbifold charts constructed for distinct $q$ to be distinct. Then we set 
\begin{align*}
&A\sceq Q, \quad &\alpha(q) &\sceq i, \quad \lambda_q \sceq \id,  \quad \mu_q \sceq \id,
\\
&B \sceq Q \sqcup Q'\setminus f(Q), \quad & \beta(q) &\sceq q, \quad \gamma(q) \sceq j, \quad \sigma_q \sceq \id \quad\text{for $q\in Q$.} 
\end{align*}
For $q' \in Q'\setminus f(Q)$ we fix $j\in J$ with $q' \in \psi'_j(W'_j)$ and set $\gamma(q') \sceq j$. Further we set 
\[
(K'_{q'}, L'_{q'}, \chi'_{q'}) \sceq (W'_j, H'_j, \psi'_j)\quad\text{and}\quad (K''_{q'}, L''_{q'}, \chi''_{q'}) \sceq (W''_j, H''_j, \psi''_j).
\]
Again we consider orbifold charts build for distinct $q'$ to be distinct and to be distinct from all those defined for some $q\in Q$, and we define $\varrho_{q'} \sceq \id$ and $\sigma_{q'} \sceq \id$. Then all requirements are satisfied. If the orbifolds are second-countable, then we see as in Lemma~\ref{fortrans} that we may restrict this construction to countably many points of $Q$ and $Q'$ resp.\@ countable sets $A$ and $B$ to achieve that $\mc K, \mc K'$ and $\mc K''$ are countable.
\end{proof}

\begin{prop}\label{compositiongood}
 The composition in $\Orbmap$ is well-defined.
\end{prop}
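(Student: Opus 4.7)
The plan is to prove well-definedness in two stages. First I would fix representatives $\hat f \in \Orbmap(\mathcal V, \mathcal V')$ of $[\hat f]$ and $\hat g \in \Orbmap(\mathcal W', \mathcal W'')$ of $[\hat g]$ and show that the class $[\hat k \circ \hat h]$ is independent of the auxiliary choice of $(\mathcal K, \mathcal K', \mathcal K'', \hat h, \hat k)$ together with the required lifts of the identity. Second, I would show that the class is independent of the choice of representatives $\hat f \in [\hat f]$ and $\hat g \in [\hat g]$. Both stages are diagram chases whose engine is the diamond property of Lemma~\ref{fortrans}, aided by Lemma~\ref{compwd} for arranging compatible intermediate atlases and by the elementary observation that a composition of lifts of the identity is again a lift of the identity.

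For the first stage, suppose we have two admissible collections indexed by $r \in \{1,2\}$, with atlases $\mathcal K_r, \mathcal K'_r, \mathcal K''_r$, lifts of the identity $\eps_r, \eta_{1,r}, \eta_{2,r}, \delta_r$, and charted orbifold maps $\hat h_r \in \Orbmap(\mathcal K_r, \mathcal K'_r)$, $\hat k_r \in \Orbmap(\mathcal K'_r, \mathcal K''_r)$ satisfying the commutativity requirements of Definition~\ref{defredorbcat}. The relation $\hat f \circ \eps_r = \eta_{1,r} \circ \hat h_r$ shows that each $\hat h_r$ is induced by $\hat f$ in the sense of Lemma~\ref{onlyinduced}. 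Applying Lemma~\ref{fortrans} to $\hat f$ together with $\hat h_1$ and $\hat h_2$ yields representatives $\mathcal L$ of $\mathcal U$ and $\mathcal L'$ of $\mathcal U'$, lifts of the identity $\sigma_r \in \Orbmap(\mathcal L, \mathcal K_r)$ and $\sigma'_r \in \Orbmap(\mathcal L', \mathcal K'_r)$, and a single charted orbifold map $\hat h' \in \Orbmap(\mathcal L, \mathcal L')$ with $\hat h_r \circ \sigma_r = \sigma'_r \circ \hat h'$ for $r=1,2$. An analogous argument applied to $\hat g$ and to the two charted orbifold maps $\hat k_r \circ \sigma'_r$ (both induced by $\hat g$ via the composed lifts $\eta_{2,r} \circ \sigma'_r$ and $\delta_r$) furnishes, after possibly further refining $\mathcal L'$ and correspondingly refining $\mathcal L$ and $\hat h'$, a representative $\mathcal L''$ of $\mathcal U''$, lifts of the identity $\tau_r \in \Orbmap(\mathcal L'', \mathcal K''_r)$, and a charted orbifold map $\hat k' \in \Orbmap(\mathcal L', \mathcal L'')$ with $\hat k_r \circ \sigma'_r = \tau_r \circ \hat k'$. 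Composing the two displayed equalities yields
\[
\hat k_r \circ \hat h_r \circ \sigma_r \;=\; \tau_r \circ \hat k' \circ \hat h' \qquad (r=1,2),
\]
which exhibits $\hat k' \circ \hat h'$ as a common charted orbifold map witnessing $[\hat k_1 \circ \hat h_1] = [\hat k_2 \circ \hat h_2]$ in the sense of Definition~\ref{mapequiv}.

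For the second stage, if $\hat f_1 \sim \hat f_2$ and $\hat g_1 \sim \hat g_2$, then Definition~\ref{mapequiv} provides, for each pair, common refinements and a single inducing charted orbifold map. By choosing in Definition~\ref{defredorbcat} the intermediate atlases on these common refinements (and invoking Lemma~\ref{compwd} to guarantee that such choices are always possible), the compositions $[\hat g_r] \circ [\hat f_r]$ can be computed from mutually compatible data; appealing to the first stage, the two resulting charted orbifold maps then share a common refinement in the sense of Definition~\ref{mapequiv}, so $[\hat g_1] \circ [\hat f_1] = [\hat g_2] \circ [\hat f_2]$.

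The main obstacle I expect is bookkeeping rather than new geometric content. The second application of Lemma~\ref{fortrans} in the first stage in general further refines the middle atlas $\mathcal L'$ produced by the first application, and one must correspondingly refine $\mathcal L$ (and re-induce $\hat h'$) so that the final composition $\hat k' \circ \hat h'$ is defined on consistent atlases. This is always possible by another application of Lemma~\ref{onlyinduced} and Proposition~\ref{extending}, but requires care in sequencing the refinements so that the final diagram used to verify $[\hat k_1 \circ \hat h_1] = [\hat k_2 \circ \hat h_2]$ closes up on both sides simultaneously. All geometric input is already encoded in Lemmas~\ref{fortrans}, \ref{compwd}, \ref{onlyinduced} and Proposition~\ref{extending}.
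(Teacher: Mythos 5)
Your proposal is correct and follows essentially the same route as the paper: a two-stage argument (independence of the auxiliary induced maps, then independence of the representatives of $[\hat f]$ and $[\hat g]$) driven by the diamond property of Lemma~\ref{fortrans}, with Lemma~\ref{compwd} used to match up the intermediate atlases and the closure of lifts of the identity under composition closing the diagrams. The only cosmetic difference is that in the first stage the paper applies Lemma~\ref{fortrans} to the pairs $(\hat h_1,\hat h_2)$ and $(\hat k_1,\hat k_2)$ in parallel and then glues with Lemma~\ref{compwd}, whereas you chain the two applications through the composed lifts of identity; both resolve the same bookkeeping issue you correctly identify.
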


\begin{proof} We use the notation from the definition of the composition. We have to show that the  composition of $[\hat f]$ and $[\hat g]$ neither depends on the choice of the induced orbifold maps $\hat h$ and $\hat k$ nor on the choice of the representatives of $[\hat f]$ and $[\hat g]$.
To prove independence of the choice of $\hat h$ and $\hat k$, suppose that we have two pairs $(\hat h_j, \hat k_j)$ of
induced orbifold maps $\hat h_j \in \Orbmap(\mathcal K_j, \mathcal K'_j)$, $\hat k_j \in
\Orbmap(\mathcal K'_j, \mathcal K''_j)$ ($j=1,2$) such that the diagram
\[
\xymatrix{
\mathcal K_1 \ar[rrr]^{\hat h_1} \ar[dr] &&& \mathcal K'_1 \ar[rrr]^{\hat k_1} \ar[dl] \ar[dr] &&& \mathcal K''_1 \ar[dl]
\\
& \mathcal V \ar[r]^{\hat f} & \mathcal V' && \mathcal W' \ar[r]^{\hat g} & \mathcal W''
\\
\mathcal K_2 \ar[rrr]^{\hat h_2} \ar[ur] &&& \mathcal K'_2 \ar[rrr]^{\hat k_2} \ar[ul] \ar[ur] &&& \mathcal K''_2 \ar[ul]
}
\]
commutes. The non-horizontal maps are lifts of identity. Lemma~\ref{fortrans}
shows the existence of representatives $\mathcal H$ of $\mathcal U$, $\mathcal
H', \mathcal I'$ of $\mathcal U'$, $\mathcal I''$ of $\mathcal U''$, and charted
orbifold maps $\hat h_3\in \Orbmap(\mathcal H, \mathcal H')$, $\hat k_3\in
\Orbmap(\mathcal I',\mathcal I'')$, and appropriate lifts of identity such that
the diagrams
\[
\xymatrix{
& \mathcal K_1 \ar[r]^{\hat h_1} & \mathcal K'_1
&&& \mathcal K'_1 \ar[r]^{\hat k_1} & \mathcal K''_1
\\
\mathcal H \ar[rrr]^{\hat h_3} \ar[ur] \ar[dr] &&& \mathcal H' \ar[ul] \ar[dl] 
& \mathcal I' \ar[rrr]^{\hat k_3} \ar[ur] \ar[dr] &&& \mathcal I'' \ar[ul] \ar[dl]
\\
& \mathcal K_2 \ar[r]^{\hat h_2} & \mathcal K'_2 
&&& \mathcal K'_2 \ar[r]^{\hat k_2} & \mathcal K''_2 
}
\]
commute. By Lemma~\ref{compwd} we find representatives $\mathcal K, \mathcal K',
\mathcal K''$ of $\mathcal U, \mathcal U', \mathcal U''$, resp.\@, charted
orbifold maps $\hat h\in \Orbmap(\mathcal K, \mathcal K')$, $\hat k\in
\Orbmap(\mathcal K',\mathcal K'')$, and appropriate lifts of identity such that 
\[
\xymatrix{
& \mathcal H \ar[r]^{\hat h_3} & \mathcal H' && \mathcal I' \ar[r]^{\hat k_3} & \mathcal I''
\\
\mathcal K \ar[rrr]^{\hat h} \ar[ur] &&& \mathcal K' \ar[rrr]^{\hat k} \ar[ur] \ar[ul]  &&& \mathcal K'' \ar[ul]
}
\]
commutes. Hence, altogether we have the commutative diagram
\begin{equation}\label{subtle}
\xymatrix{
\mathcal K_1 \ar[r]^{\hat h_1} & \mathcal K'_1 \ar[r]^{\hat k_1} & \mathcal K''_1
\\
\mathcal K \ar[r]^{\hat h} \ar[u] \ar[d] & \mathcal K' \ar[r]^{\hat k} \ar[u] \ar[d] & \mathcal K'' \ar[u] \ar[d]
\\
\mathcal K_2 \ar[r]^{\hat h_2} & \mathcal K'_2 \ar[r]^{\hat k_2} & \mathcal K''_2
}
\end{equation}
which shows that $\hat k_1 \circ \hat h_1$ and $\hat k_2\circ\hat h_2$ are equivalent (see Remark~\ref{details} below for some more details).

For the proof of the independence of the choices of the representatives of $[\hat f]$ and $[\hat g]$, let $\hat f_1 \in \Orbmap(\mathcal V_1,\mathcal V'_1)$, $\hat f_2 \in \Orbmap(\mathcal V_2,\mathcal V'_2)$ be representatives of $[\hat f]$, and $\hat g_1 \in \Orbmap(\mathcal W'_1, \mathcal W''_1)$, $\hat g_2\in \Orbmap(\mathcal W'_2,\mathcal W''_2)$ be representatives of $[\hat g]$. Further, for $j=1,2$, let $\hat h_j \in \Orbmap(\mathcal K_j, \mathcal K'_j)$ be induced by $\hat f_j$, and $\hat k_j \in \Orbmap(\mathcal K'_j, \mathcal K''_j)$ be induced by $\hat g_j$.
Since $\hat f_1$ and $\hat f_2$ are equivalent, we find representatives $\mathcal V$, $\mathcal V'$ of $\mathcal U$, $\mathcal U'$, resp.\@,  a charted orbifold map $\hat f\in \Orbmap(\mathcal V,\mathcal V')$ and appropriate lifts of identities, and analogously for $\hat g_1$ and $\hat g_2$, such that the diagrams
\[ 
\xymatrix{
& \mathcal V_1 \ar[r]^{\hat f_1} & \mathcal V'_1 
&&& \mathcal W'_1 \ar[r]^{\hat g_1} & \mathcal W''_1
\\
\mathcal V \ar[rrr]^{\hat f} \ar[ur] \ar[dr] &&& \mathcal V' \ar[ul] \ar[dl]
& \mathcal W' \ar[rrr]^{\hat g} \ar[ur] \ar[dr] &&& \mathcal W'' \ar[ul] \ar[dl]
\\
& \mathcal V_2 \ar[r]^{\hat f_2} & \mathcal V'_2
&&& \mathcal W'_2 \ar[r]^{\hat g_2} & \mathcal W''_2
}
\]
commute. Lemma~\ref{compwd} yields the existence of $\hat h\in \Orbmap(\mathcal K, \mathcal K')$ and $\hat k\in \Orbmap(\mathcal K', \mathcal K'')$ and appropriate lifts of identities such that 
\[
\xymatrix{
& \mathcal V \ar[r]^{\hat f} & \mathcal V' && \mathcal W' \ar[r]^{\hat g} & \mathcal W''
\\
\mathcal K \ar[rrr]^{\hat h} \ar[ur] &&& \mathcal K' \ar[rrr]^{\hat k} \ar[ul] \ar[ur] &&& \mathcal K'' \ar[ul]
}
\]
commutes. Since $\hat h$ is induced by $\hat f_1$ and by $\hat f_2$, and
likewise, $\hat k$ is induced by $\hat g_1$ and by $\hat g_2$, we conclude as above that $\hat k_1 \circ \hat h_1$ and $\hat k_2\circ \hat
h_2$ are both equivalent to $\hat k\circ\hat h$. This yields that
the composition map is well-defined.
\end{proof}

\begin{remark}\label{details}
We elaborate on a subtle issue in the proof of Proposition~\ref{compositiongood}. By the reasoning in the proof, the diagram \eqref{subtle} \textit{a priori} is of the form
\[
\xymatrix{
\mathcal K_1 \ar[r]^{\hat h_1} & \mathcal K'_1 && \mathcal K'_1 \ar[r]^{\hat k_1} & \mathcal K''_1
\\
\mc H \ar[r]^{\hat h_3}\ar[u] & \mc H'\ar[u] && \mc I'\ar[r]^{\hat k_3}\ar[u] & \mc I''\ar[u]
\\
\mathcal K \ar[rr]^{\hat h} \ar[u]\ar[d] && \mathcal K' \ar[rr]^{\hat k} \ar[ur]\ar[dr] \ar[ul]\ar[dl] && \mathcal K'' \ar[u]\ar[d]
\\
\mc H \ar[r]^{\hat h_3}\ar[d] & \mc H'\ar[d] && \mc I'\ar[r]^{\hat k_3}\ar[d] & \mc I''\ar[d]
\\
\mathcal K_2 \ar[r]^{\hat h_2} & \mathcal K'_2 && \mathcal K'_2 \ar[r]^{\hat k_2} & \mathcal K''_2
}
\]
and we implicitly claim that the two maps from $\mc K'$ to $\mc K'_1$ as well as from $\mc K'$ to $\mc K'_2$  are identical. This might lead to the suspicion that the composition of charted orbifold maps as defined above is not well-defined.

\begin{figure}[h]
\[
\xymatrix{
K \ar[d] \ar[rr]^{h} && K'\subseteq K'_1 \ar[dl]^{\eps_1} \ar[dr]_{\eps_2} \ar[rr]^{k} && K'' \ar[d]
\\
H \ar[r]^{h_3 = f\vert_H} \ar[d] & H' \ar[dr]^{\eps_1^{-1}} && I' \ar[dl]_{\eps_2^{-1}} \ar[r]^{k_3 = g\vert_{I'}} & I'' \ar[d]
\\
K_1 \ar[rr]^{h_1} \ar[d] && K'_1 \ar[rr]^{k_1} \ar[dl]^{\eps_1} \ar[dr]_{\eps_2} && K''_1 \ar[d]
\\
V \ar[r]^{f} & V' && W' \ar[r]^{g} & W''
\\
K_2 \ar[u] \ar[rr]^{h_2} && K'_2 \ar[ul]_{\eta_1} \ar[ur]^{\eta_2} \ar[rr]^{k_2} && K''_2 \ar[u]
\\
H \ar[u] \ar[r]^{h_3 = f\vert_H} & H' \ar[ur]_{\eta_1^{-1}} && I' \ar[ul]^{\eta_2^{-1}} \ar[r]^{k_3 = g\vert_{I'}} & I'' \ar[u]
\\
K \ar[u] \ar[rr]^{h} && K' \ar[ul]_{\xi^{-1}\circ\eta_1} \ar[ur]^{\xi^{-1}\circ\eta_2} \ar[rr]^{k} && K'' \ar[u]
}
\]
\caption{Maps in local charts}\label{fig:subtle}
\end{figure}

However, since $\hat h, \hat h_1, \hat h_2$ are all induced by $\hat f$, and $\hat k, \hat k_1, \hat k_2$ are all induced by $\hat g$, the arising maps $\mc K'\to \mc K'_1$ are indeed identical as well as the arising maps $\mc K'\to \mc K'_2$. The diagram in Figure~\ref{fig:subtle} shows the maps between the local charts, which proves the latter statement as well as that diagram~\ref{subtle} is indeed commutative. 
The notation in this diagram is intuitive, e.g. $K$ is an orbifold chart in $\mc K$, all greek letters refer to diffeomorphisms. To improve readibility we may assume without loss of generality that $K'\subseteq K'_1$, $K'_1 \subseteq V'\cap W'$ and $K'_2 \subseteq V'\cap W'$. The latter two choices implicitly mean that we assume that the local diffeomorphism between $V'$ and $W'$ is the identity, which we indeed may. Otherwise we would just need to add the same local diffeomorphism to several maps in the diagram. The local diffeomorphism between $K'$ and $K'_2$ is denoted by $\xi$ (note that we are not allowed to suppose that $\xi=\id$ in general). Note that the top and bottom maps in the diagram are identical.

\end{remark}

We end this section with a discussion of the equivalence class represented by a lift of identity. The following proposition shows that it is precisely the class of all lifts of identity of the considered orbifold. This justifies the notion ``identity morphism'' in Definition~\ref{liftiddef}.

\begin{prop}\label{equivclassid}
Let $(Q,\mathcal U)$ be an orbifold and $\eps$ a lift of $\id_{(Q,\mc U)}$. Then the equivalence class $[\eps]$ of $\eps$ consists precisely of all lifts of $\id_{(Q,\mc U)}$.
\end{prop}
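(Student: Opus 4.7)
I would prove the equality $[\eps] = \{\text{lifts of }\id_{(Q,\mc U)}\}$ by establishing two inclusions. For the first — that every element of $[\eps]$ is a lift of identity — I take a representative $\hat f$ of a class in $[\eps]$ and apply Definition~\ref{mapequiv} with $\hat f_1 \sceq \hat f$ and $\hat f_2 \sceq \eps$. The source and target orbifold structures of $\hat f$ are forced to agree with those of $\eps$, so both the domain and the range atlas of $\hat f$ are representatives of the single structure $\mc U$. Reading off underlying continuous maps from the two commutative triangles — each non-horizontal edge of which is a lift of identity — yields $f = \id_Q$. Consequently every local lift $\tilde f_i$ of $\hat f$ is a local lift of $\id_Q$ between two orbifold charts of $\mc U$, and Proposition~\ref{induceslifts} immediately gives that $\tilde f_i$ is a local diffeomorphism. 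By Definition~\ref{liftiddef}, $\hat f$ is a lift of $\id_{(Q,\mc U)}$.

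For the opposite inclusion, I would take a second lift $\eps' \in \Orbmap(\mc V_2, \mc V'_2)$ of $\id_{(Q,\mc U)}$ (alongside $\eps \in \Orbmap(\mc V_1, \mc V'_1)$) and construct the equivalence diagram of Definition~\ref{mapequiv} with a bridging map $\hat h$ chosen itself to be a lift of $\id_{(Q,\mc U)}$ whose local lifts are the identities. For each $q \in Q$, Proposition~\ref{induceslifts} applied to $\eps$ and to $\eps'$ near $q$, together with chart compatibility, produces a single orbifold chart $(W_q, L_q, \chi_q) \in \mc U$ with $q\in\chi_q(W_q)$, open embeddings $\lambda^{(k)}_q$ of $W_q$ into suitable charts of $\mc V_k$, and open embeddings $\mu^{(k)}_q$ of $W_q$ into suitable charts of $\mc V'_k$ ($k=1,2$), such that the appropriate local lift of $\eps$ (resp.\@ $\eps'$) composed with $\lambda^{(1)}_q$ (resp.\@ $\lambda^{(2)}_q$) equals $\mu^{(1)}_q$ (resp.\@ $\mu^{(2)}_q$). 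Setting $\mc W \sceq \mc W' \sceq \{(W_q,L_q,\chi_q)\mid q\in Q\}$, Proposition~\ref{extending} upgrades the four open-embedding families to lifts of identity $\eta_1, \eta_2, \eta'_1, \eta'_2$, and $\hat h$ is defined by the local lifts $\id_{W_q}$. Construction~\ref{constr_comp} then yields that $\eps\circ\eta_1$ and $\eta'_1\circ\hat h$ share the local-lift family $\{\mu^{(1)}_q\}_{q\in Q}$, and analogously on the $\eps'$-side.

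The main subtlety I expect is that equality of charted orbifold maps demands agreement of the $(P,\nu)$-equivalence classes of Definition~\ref{Pnu_equiv}, not merely of the local-lift families. However, each of the four compositions involved in the two triangles has underlying map $\id_Q$ and local lifts that are local diffeomorphisms, so each is itself a lift of $\id_{(Q,\mc U)}$; the uniqueness clause of Proposition~\ref{extending} then forces the two sides of each triangle to carry the same $(P,\nu)$-equivalence class as soon as their local-lift families coincide. Both triangles therefore commute in $\Orbmap(\mc W, \mc W')$, witnessing $\eps \sim \eps'$.
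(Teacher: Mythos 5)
Your proof is correct, but the first inclusion follows a genuinely different (and shorter) route than the paper's. For the direction ``every element of $[\eps]$ is a lift of identity'', the paper does \emph{not} invoke Proposition~\ref{induceslifts} directly on the $\tilde f_j$; instead it chases the commutative diagram of Definition~\ref{mapequiv}: from $\id_{V_{\alpha(a)}}\circ\lambda_{1,a}=\mu_{1,\gamma(a)}\circ\tilde g_a$ it deduces that the local lifts $\tilde g_a$ of the bridging map are local diffeomorphisms, transports this to the $\tilde f_{\beta(a)}$ via the second triangle, and then uses Lemma~\ref{welldefinedll} to cover the indices $j$ not in the image of $\beta$. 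Your shortcut --- $f=\id_Q$ plus Proposition~\ref{induceslifts} applied to each $\tilde f_i$ --- is valid, but it leans on the fact that the \emph{range} atlas of $\hat f$ represents the same structure $\mc U$; this is indeed guaranteed here, because Definition~\ref{mapequiv} only declares maps equivalent when their range atlases represent a common structure and Proposition~\ref{samestructure} pins that structure down to $\mc U$ for the range atlas of $\eps$. You should make that appeal to Proposition~\ref{samestructure} explicit; with it, your argument is a clean simplification, whereas the paper's diagram chase has the merit of deriving the local-diffeomorphism property without presupposing which structure the range atlas represents. For the direction ``any two lifts of identity are equivalent'', your construction of a common refinement $\mc W=\mc W'$ with $\hat h=\widehat\id$ and the use of the uniqueness clause of Proposition~\ref{extending} to match the $[P,\nu]$-classes is exactly the content of the paper's (much terser) appeal to Propositions~\ref{induceslifts} and \ref{extending}; you have simply spelled out the details the paper suppresses.
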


\begin{proof} Let $\eps_1 \in \Orbmap(\mathcal V_1, \mathcal W_1)$ and $\eps_2
\in \Orbmap(\mathcal V_2, \mathcal W_2)$ be two lifts of $\id_{(Q,\mc U)}$.
Propositions~\ref{induceslifts}  and \ref{extending} imply that there is a representative $\mc V$ of $\mc U$ such that $\eps_1$ and
$\eps_2$ both induce the orbifold map 
\[ \widehat \id_Q \sceq ( \id_Q, \{ \id_{V_i} \}_{i\in I}, [R,\sigma] ) \]
with $(\mathcal V,\mc V)$. Thus, each two lifts of $\id_{(Q,\mc U)}$ are equivalent. 

Let $\hat f$ be a charted orbifold map which is equivalent to $\eps$.
W.l.o.g.\@ we may assume that $\eps=\widehat\id_Q$. To fix notation let 
\begin{align*}
\mathcal V & = \{ (V_i, G_i, \pi_i) \mid i\in I\}, \text{indexed by $I$,}
\\
\mathcal K_1 & = \{ (K_{1,a}, L_{1,a}, \chi_{1,a} ) \mid a\in A \}, \text{ indexed by $A$,}
\\
\mathcal K_2 & = \{ (K_{2,b}, L_{2,b}, \chi_{2,b} ) \mid b\in B \}, \text{ indexed by $B$,}
\\
\mathcal W_1 & = \{ (W_{1,j}, H_{1,j}, \psi_{1,j} ) \mid j\in J \}, \text{ indexed by $J$,}
\\
\mathcal W_2 & = \{ (W_{2,k}, H_{2,k}, \psi_{2,k} ) \mid k\in K \}, \text{ indexed by $K$,} 
\end{align*}
be representatives of $\mathcal U$. Let 
\[
 \hat f  = \big( f, \{ \tilde f_j\}_{j\in J}, [P_f,\nu_f] \big) \in \Orbmap(\mc W_1,\mc W_2).
\]
Suppose that 
\[
\hat g = \big( g, \{ \tilde g_a\}_{a\in A}, [P_g,\nu_g] \big) \in \Orbmap(\mc K_1,\mc K_2)
\]
is a charted orbifold map and
\begin{align*}
\eps_1 & = \big( \id_Q, \{ \lambda_{1,a} \}_{a\in A}, [P_1,\nu_1] \big) \in \Orbmap(\mc K_1,\mc V)
\\
\eps_2 & = \big( \id_Q, \{ \lambda_{2,a} \}_{a\in A}, [P_2,\nu_2] \big) \in \Orbmap(\mc K_1,\mc W_1)
\\
\delta_1 & = \big( \id_Q, \{ \mu_{1,b} \}_{b\in B}, [R_1,\sigma_1] \big) \in  \Orbmap(\mc K_2, \mc V)
\\
\delta_2 & = \big( \id_Q, \{ \mu_{2,b} \}_{b\in B}, [R_2, \sigma_2] \big) \in \Orbmap(\mc K_2,\mc W_2)
\end{align*}
are lifts of $\id_{(Q,\mc U)}$ such that the diagram (which shows that $\hat f$ and $\widehat \id_Q$ are equivalent)
\[
\xymatrix{
& \mathcal V \ar[r]^{\widehat \id_Q} & \mathcal V
\\
\mathcal K_1 \ar[ur]^{\eps_1} \ar[dr]_{\eps_2} \ar[rrr]^{\hat g} &&& \mathcal K_2 \ar[ul]_{\delta_1} \ar[dl]^{\delta_2}
\\
& \mathcal W_1 \ar[r]^{\hat f} & \mathcal W_2
}
\]
commutes. Clearly, $g=\id_Q$ and hence $f=\id_Q$. Let $\alpha \co A\to I$, $\beta\co A \to J$, $\gamma\co A\to B$, $\delta \co B \to I$, $\eta\co B \to K$ and $\zeta\co J \to K$ be the induced maps on the index sets as, e.g., in Construction~\ref{constr_comp}. For each $a\in A$, we
have 
\[ \id_{V_{\alpha(a)}}\circ \lambda_{1,a} = \mu_{1,\gamma(a)}\circ \tilde g_a.\]
Since $\id_{V_{\alpha(a)}}$, $\lambda_{1,a}$ and $\mu_{1,\gamma(a)}$ are local diffeomorphisms, so is $\tilde g_a$. Now 
\[ \tilde f_{\beta(a)} \circ \lambda_{2,a} = \mu_{2,\gamma(a)} \circ \tilde g_a\]
for each $a\in A$. Hence $\tilde f_{\beta(a)}$ is a local diffeomorphism.
Lemma~\ref{welldefinedll} implies that $\tilde f_j$ is a local diffeomorphism
for each $j\in J$. Therefore, $\hat f$ is a lift of $\id_{(Q,\mc U)}$. 
\end{proof}

\section{The orbifold category in terms of marked atlas groupoids}\label{atlascategory}

Proposition~\ref{conclusion} and Remark~\ref{F1_equivariant} show that charted orbifold maps and their composition correspond to homomorphisms between marked atlas grou\-poids and their composition. By characterizing lifts of identity and equivalence of charted orbifold maps in terms of marked atlas grou\-poids and their homomorphisms, we construct a category for marked atlas groupoids which is isomorphic to the one of reduced orbifolds. To that end we first show that lifts of identity correspond to unit weak equivalences, a notion we define below. Throughout this section let $\pr_1$ denote the projection onto the first component.

A homomorphism $\varphi=(\varphi_0,\varphi_1)\co G\to H$ between Lie groupoids is called a \textit{weak equivalence} if
\begin{enumerate}[(i)]
\item  the map
\[
 t\circ \pr_1\co H_1\pullback{s}{\varphi_0} G_0 \to H_0
\]
is a surjective submersion, and
\item the diagram
\[
\xymatrix{
G_1 \ar[r]^{\varphi_1}\ar[d]_{(s,t)} & H_1 \ar[d]^{(s,t)}
\\
G_0\times G_0 \ar[r]^{\varphi_0\times\varphi_0} & H_0\times H_0
}
\]
is a fibered product.
\end{enumerate}
Two Lie groupoids $G,H$ are called \textit{Morita equivalent} if there is a Lie
groupoid $K$ and weak equivalences 
\[
\xymatrix{
G & K \ar[l]_{\varphi} \ar[r]^{\psi} & H.
}
\]

\begin{defi}
Let $(G_1,\alpha_1,X_1)$ and $(G_2,\alpha_2,X_2)$ be marked atlas groupoids. A homomorphism
\[
\varphi=(\varphi_0,\varphi_1)\co (G_1,\alpha_1, X_1) \to (G_2,\alpha_2, X_2)
\]
is called a \textit{unit weak
equivalence} if $\varphi\co G_1 \to G_2$ is
a weak equivalence and $\alpha_2\circ |\varphi|
\circ\alpha_1^{-1} = \id_{X_1}$. Necessarily we have $X_1=X_2\seqc X$.
A \textit{unit Morita equivalence} between $(G_1,\alpha_1, X)$ and $(G_2,\alpha_2, X)$ is a pair $(\psi_1, \psi_2)$ of unit weak equivalences 
\[
\psi_j\co (G,\alpha, X) \to (G_j,\alpha_j, X)
\]
where $(G,\alpha, X)$ is some marked atlas groupoid. If such a unit Morita equivalence exists, then the marked atlas groupoids $(G_1,\alpha_1, X)$ and $(G_2,\alpha_2, X)$ are called \textit{unit Morita equivalent}.  
\end{defi}

In contrast to Morita equivalence of Lie groupoids, unit Morita equivalence of marked atlas groupoids requires the third (marked) Lie groupoid to be an atlas groupoid. In Proposition~\ref{Morequivgr} below we will show that unit Morita equivalence of marked atlas groupoids is indeed an equivalence relation. 

The following proposition identifies lifts of identity with unit weak equivalences. 

\begin{prop}\label{charuwe}
Let $\mathcal U$ and $\mathcal U'$ be orbifold structures on the topological space $Q$. Further let $\mc V$ resp.\@ $\mc W'$ be a representative of $\mathcal U$ resp.\@ of  $\mathcal U'$.
\begin{enumerate}[{\rm (i)}]
\item\label{charuwei} Suppose that $\mc U = \mc U'$. If $\hat f\in \Orbmap(\mathcal V,\mathcal W')$ is a lift
of $\id_{(Q,\mc U)}$, then $F_1(\hat f)$  is a unit weak equivalence.
\item\label{charuweii} Let $\eps \in \Hom(\Gamma(\mathcal V), \Gamma(\mathcal
W'))$ be a unit weak equivalence. Then $\mc U = \mc U'$, and $F_2(\eps)$ is a
lift of $\id_{(Q,\mc U)}$.  
\end{enumerate}
\end{prop}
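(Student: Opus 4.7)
I will prove parts \eqref{charuwei} and \eqref{charuweii} in parallel, with the crucial input being that a homomorphism $\varphi\co \Gamma(\mc V)\to \Gamma(\mc W')$ covering $\id_Q$ on orbit spaces is a weak equivalence if and only if its object map $\varphi_0$ is a local diffeomorphism. Throughout, set $V\sceq \coprod V_i$, $W'\sceq \coprod W'_j$ and let $\pi,\pi'$ denote the associated projections.

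First I would record two simple facts. From Proposition~\ref{orb_gr} and Proposition~\ref{inducedorbgr} the marking identity $\alpha_{\mc W'}\circ|\varphi|\circ\alpha_{\mc V}^{-1}=\id_Q$ is equivalent, on the orbifold side, to the underlying continuous map being $\id_Q$; and $\id_Q$ being a homeomorphism, the map $|\varphi|$ is a homeomorphism of orbit spaces. From this the surjectivity of $t\circ\pr_1\co \Gamma(\mc W')_1\pullback{s}{\varphi_0}\Gamma(\mc V)_0\to \Gamma(\mc W')_0$ is automatic: for $w\in W'$ we pick $x\in V$ with $|\varphi|([x])=[w]$, i.e.\@ an arrow $h\co \varphi_0(x)\to w$, and take $(h,x)$. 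The submersion and fibered-product conditions are local, so it suffices to work on étale charts of dimension $n$ (the orbifold dimension).

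For \eqref{charuwei} suppose $\hat f=(\id_Q,\{\tilde f_i\},[P,\nu])$ is a lift of $\id_{(Q,\mc U)}$. By Definition~\ref{liftiddef} each $\tilde f_i$ is a local diffeomorphism, hence so is $\varphi_0=\coprod \tilde f_i$. Combined with $s$ being an étale map, $t\circ\pr_1$ is a composition of local diffeomorphisms, in particular a surjective submersion. For the fibered product square, injectivity is immediate: if $\eps_1(\germ_x\lambda)=\eps_1(\germ_x\mu)$, then by (\apref{R}{invariant}{}) $\tilde f_i\circ\lambda=\tilde f_i\circ\mu$ near $x$, and cancelling the local diffeomorphism $\tilde f_i$ gives $\germ_x\lambda=\germ_x\mu$. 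For surjectivity, given $h\co\varphi_0(x)\to\varphi_0(y)$ in $\Gamma(\mc W')_1$ with $\pi'(\varphi_0(x))=\pi'(\varphi_0(y))$, the identity $\pi=\pi'\circ\varphi_0$ gives $\pi(x)=\pi(y)$, so there exists some arrow $g_0\co x\to y$ in $\Gamma(\mc V)_1$ with image $h_0\sceq\varphi_1(g_0)$. Then $h\cdot h_0^{-1}$ lies in the isotropy of $\varphi_0(y)$; Proposition~\ref{induceslifts} applied to $\tilde f$ near $y$ identifies, via $\tilde f_i$, the isotropy group of $y$ in $\Gamma(\mc V)$ with that of $\varphi_0(y)$ in $\Gamma(\mc W')$, and by (\apref{R}{invariant}{}) this chart isomorphism agrees with $\varphi_1$ on the relevant germs. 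Hence there is a unique $k\co y\to y$ in $\Gamma(\mc V)_1$ with $\varphi_1(k)=h\cdot h_0^{-1}$, and $g\sceq k\cdot g_0$ satisfies $\varphi_1(g)=h$; uniqueness then follows from injectivity.

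For \eqref{charuweii} let $\eps$ be a unit weak equivalence and write $F_2(\eps)=(f,\{\tilde f_i\},[P,\nu])$. The marking condition forces $f=\id_Q$ and $\pi'_{j}\circ\tilde f_i=\pi_i$, so each $\tilde f_i$ is a local lift of $\id_Q$. Since both groupoids are étale of dimension $n$, locally the fibered product $\Gamma(\mc W')_1\pullback{s}{\eps_0}\Gamma(\mc V)_0$ is identified, via $\pr_2$ and a local section of $s$, with $\Gamma(\mc V)_0$, and under this identification $\pr_1$ is locally the composition of $\eps_0$ with a local diffeomorphism; the submersion hypothesis on $t\circ\pr_1=$ (local diffeomorphism)$\,\circ\,\pr_1$ thus forces $\eps_0$ to be a submersion between equidimensional manifolds, hence a local diffeomorphism. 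Therefore each $\tilde f_i=\eps_0|_{V_i}$ is a local diffeomorphism, and Proposition~\ref{samestructure} yields $\mc U=\mc U'$. Finally Definition~\ref{liftiddef} identifies $F_2(\eps)$ as a lift of $\id_{(Q,\mc U)}$.

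The main obstacle I anticipate is the surjectivity clause of the fibered product condition in \eqref{charuwei}: one has to turn the local chart isomorphism provided by Proposition~\ref{induceslifts} into a concrete lift of a given arrow $h$ via an isotropy correction. The cleanest way is to invoke Proposition~\ref{induceslifts} to realize $\tilde f_i$ locally as a diffeomorphism of orbifold charts, and then read the required arrow off Remark~\ref{groupiso}.
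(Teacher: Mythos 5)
Your proof is correct and its skeleton matches the paper's: in \eqref{charuwei} both conditions of a weak equivalence are verified using that the local lifts of $\id_Q$ are local diffeomorphisms (Proposition~\ref{induceslifts}), and in \eqref{charuweii} one extracts that $\eps_0$ is a local diffeomorphism and then applies Proposition~\ref{samestructure}. Two sub-steps are executed differently. For the surjectivity of $G_1 \to (G_0\times G_0)\pullback{(\eps_0,\eps_0)}{(s,t)}H_1$ the paper does not split off an isotropy correction: given $(x,y,\germ_{\eps_0(x)}h)$ it shrinks to neighborhoods $U_1\ni x$, $U_2\ni y$ on which $\eps_0$ is an open embedding with image inside $\dom h$ resp.\@ $\cod h$, observes that $g\sceq(\eps_0\vert_{U_2})^{-1}\circ h\circ\eps_0\vert_{U_1}$ lies in $\Psi(\mc V)$ because $\pi=\pi'\circ\eps_0$, and reads off $\eps_1(\germ_x g)=\germ_{\eps_0(x)}h$ from the essential uniqueness of $[P,\nu]$ in Proposition~\ref{extending}. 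This is shorter than your route through an auxiliary arrow $g_0\co x\to y$ plus an isotropy-group isomorphism; your version additionally needs the (true, but not free) facts that the isotropy group of $y$ in $\Gamma(\mc V)$ is identified with the point stabilizer $G_y$ via \cite[Lemma~2.11]{Moerdijk_Mrcun}, and that the charts $S$, $\tilde f(S)$ of Proposition~\ref{induceslifts} can be shrunk (Lemma~\ref{SIN}) so that their isotropy groups equal the point stabilizers before Remark~\ref{groupiso} gives the bijection you invoke — these should be made explicit if you keep that route. Conversely, in \eqref{charuweii} the paper simply cites \cite[Exercises~5.16(4)]{Moerdijk_Mrcun} for $\eps_0$ being a local diffeomorphism, whereas you derive it from the submersion condition by trivializing $s$ locally; your argument is self-contained and also settles the equidimensionality you silently use. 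One small point in \eqref{charuwei}: in the injectivity step you must use that the two arrows being compared have the same source \emph{and the same target}, since you need $\lambda(x)=\mu(x)$ before cancelling the locally injective map $\tilde f_j$ near that common point; this is part of the fibered-product data but deserves a word.
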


\begin{proof} Let 
\[ \mathcal V = \{ (V_i, G_i, \pi_i) \mid i\in I\}\quad \text{resp.\@}\quad \mathcal W' = \{ (W'_j, H'_j, \psi'_j)\mid j\in J\},\]
indexed by $I$ resp.\@ by $J$, and let $G\sceq \Gamma(\mathcal V)$ and $H\sceq \Gamma(\mathcal W')$.
We will first prove \eqref{charuwei}. Suppose $\hat f = (\id_Q, \{
\tilde f_i \}_{i\in I}, [P,\nu])$. By Proposition~\ref{orb_gr} it suffices to
show that $\eps=(\eps_0,\eps_1)\sceq F_1(\hat f)$ is a weak equivalence. We
first show that 
\[ 
t\circ \pr_1 \co 
\left\{
\begin{array}{ccc}H_1 \pullback{s}{\eps_0} G_0 & \to & H_0
\\
(h,x) & \mapsto & t(h)
\end{array}
\right.
\]
is a submersion. Let $(h,x)\in H_1\pullback{s}{\eps_0} G_0$. Recall from
Proposition~\ref{induceslifts} that $\eps_0$ is a local diffeomorphism, and from
Special Case~\ref{atlasgroupoid} that $G$ and $H$ are \'etale groupoids. Choose
open neighborhoods $U_x$ of $x$ in $G_0$ and $U_h$ of $h$ in $H_1$ such that
$\eps_0\vert_{U_x}$ and $s\vert_{U_h}$ are embeddings with $s(U_h) =
\eps_0(U_x)$. Then $U_h\pullback{s}{\eps_0}U_x$ is open in
$H_1\pullback{s}{\eps_0}G_0$. Further
\begin{align*}
U_h\pullback{s}{\eps_0}U_x & = \left\{ (k,y)\in U_h\times U_x \left\vert\  s(k) = \eps_0(y) \right.\right\}
\\ & = \left\{ \left(k, \eps_0^{-1}(s(k))\right) \left\vert\ k\in U_h \vphantom{\left(k, \eps_0^{-1}(s(k))\right) } \right.\right\}.
\end{align*}
Therefore, 
\[ \pr_1 \co U_h\pullback{s}{\eps_0}U_x \to U_h \]
is a diffeomorphism. Since $t$ is a local diffeomorphism, $t\circ\pr_1$ is a submersion. 

Now we prove that $t\circ\pr_1$ is surjective. Let $y\in H_0$, say $y\in W'_j$, and set $\psi'_j(y) \seqc q\in Q$. Then there is an orbifold chart $(V_i, G_i, \pi_i)\in \mathcal V$ such that $q\in \pi_i(V_i)$, say $q=\pi_i(x)$. 
\[
\xymatrix{
V_i \ar[rr]^{\tilde f_i}\ar[dr]_{\pi_i} && W'_i \ar[dl]^{\psi'_i}
\\
& Q 
}
\]
Set $z\sceq \tilde f_i(x)$, hence $\psi'_i(z) = q = \psi'_j(y)$. Hence, there are a restriction $(S',K',\chi')$ of  $(W'_i,H'_i,\psi'_i)$ with $z\in S'$ and an embedding 
\[ \lambda\co (S', K', \chi') \to (W'_j, H'_j, \psi'_j)\]
such that $\lambda(z) = y$. Then $\lambda\in \Psi(\mathcal W')$ and $(\germ_z \lambda, x)\in H_1 \pullback{s}{\eps_0} G_0$ with 
\[
t\circ \pr_1(\germ_z\lambda, x) = t(\germ_z\lambda) = y.
\]
This means that $t\circ\pr_1$ is surjective. 

Set
\[ K \sceq (G_0\times G_0) \pullback{(\eps_0,\eps_0)}{(s,t)} H_1.\]
It remains to show that the map
\[ \beta\co\left\{
\begin{array}{ccc}
G_1 & \to & K
\\
\germ_x g & \mapsto & (x, g(x), \eps_1(\germ_x g))
\end{array}\right.
\]
is a diffeomorphism. Note that $\beta = (s, t, \eps_1)$.
Let $(x,y,\germ_{\eps_0(x)}h)$ be in $K$, hence $\germ_{\eps_0(x)} h\co \eps_0(x) \to \eps_0(y)$. By the definition of $H_1$ there are open neighborhoods $U'_1$ of $\eps_0(x)$ and $U'_2$ of $\eps_0(y)$ in $W'\sceq \coprod_{j\in J} W'_j$ such that $h\co U'_1 \to U'_2$ is an element of $\Psi(\mathcal W')$. Since $\eps_0$ is a local diffeomorphism, there are open neighborhoods $U_1$ of $x$ and $U_2$ of $y$ in $V\sceq \coprod_{i\in I} V_i$ such that $\eps_0\vert_{U_k}$ is an embedding with $\eps_0(U_k) \subseteq U'_k$ ($k=1,2$). After shrinking $U'_k$ we can assume that $\eps_0(U_k) = U'_k$. Let $\gamma_k\sceq \eps_0\vert_{U_k}$. Then
\[ g\sceq \gamma_2^{-1}\circ h \circ \gamma_1 \co U_1 \to U_2 \]
is a diffeomorphism, hence $g\in \Psi(\mathcal V)$. By Proposition~\ref{extending} (or rather its proof), we have $\nu(g) = h$ and hence (see (\apref{R}{invariant}{}) and recall Proposition~\ref{orb_gr})
\[
\eps_1(\germ_x g) = \germ_{\eps_0(x)} h.
\]
Finally, we see 
\[ \beta(\germ_x g) = (x, g(x), \eps_1(\germ_x g)) = (x,y, \germ_{\eps_0(x)} h). \] 
Therefore $\beta$ is surjective. Since $\germ_x g$ does not depend on the choice of $U_k$ and $U'_k$, the map $\beta$ is also injective. Finally, we will show that $\beta$ is a local diffeomorphism. Since $s$ and $t$ are local diffeomorphisms, we only need to prove that $\eps_1$ is one as well. Let $\germ_x f\in G_1$. Choose an open neighborhood $U$ of $x$ such that $U\subseteq \dom f$ and $\eps_0\vert_U\co U\to \eps_0(U)$ is a diffeomorphism. By the germ topology, the set
\[ \tilde U \sceq \{ \germ_y f\mid y\in U \}\]
is open in $G_1$, and the set 
\[ \tilde V \sceq \{ \germ_z \nu(f) \mid z\in \eps_0(U)\} \]
is open in $H_1$. Further the diagrams
\[
\xymatrix{
\tilde U \ar[r]^{\eps_1} \ar[d] & \tilde V\ar[d] &&& \germ_yf \ar@{|->}[r]^{\eps_1} \ar@{|->}[d] & \germ_{\eps_0(y)} \nu(f)\ar@{|->}[d] 
\\
U \ar[r]_{\eps_0} & \eps_0(U) &&& y \ar@{|->}[r]_{\eps_0} & \eps_0(y)
}
\]
commute. Since the vertical arrows are diffeomorphisms by definition, \[\eps_1\vert_{\tilde U} \co \tilde U \to \tilde V\] is a diffeomorphism as well. This
completes the proof of \eqref{charuwei}.

We will now prove \eqref{charuweii}. Proposition~\ref{backorb} shows that the
orbifold atlases $\mc V$ and $\mc W'$ are determined completely by the marked
atlas groupoids $\Gamma(\mc V)$ and $\Gamma(\mc W')$, resp. Hence we can apply
Proposition~\ref{inducedorbgr}, which shows that $F_2(\eps)$ is well-defined. Suppose
that
\[
 F_2(\eps) = \big( f, \{\tilde f_i\}_{i\in I}, [P,\nu]).
\]
Proposition~\ref{inducedorbgr} yields $f=\id_Q$. By  \cite[Exercises~5.16(4)]{Moerdijk_Mrcun} $\eps_0$
is a local diffeomorphism. Thus, Proposition~\ref{inducedorbgr} implies that each
$\tilde f_i$ is a local diffeomorphism. The domain atlas of $F_2(\eps)$ is $\mc
V$, its range family is $\mc W'$. From Proposition~\ref{samestructure} it follows
that $\mc U = \mc U'$. By Definition~\ref{liftiddef} $F_2(\eps)$ is a lift of $\id_{(Q,\mc U)}$.
\end{proof}

The combination of Propositions~\ref{backorb}, ~\ref{charuwe} and Remark~\ref{F1_equivariant} now allows to identity  each step in  the construction of the category of reduced orbifolds  and each intermediate object in terms of marked atlas groupoids. For an orbifold $(Q,\mc U)$ we define
\[
 \Gamma(Q,\mc U) \sceq \big\{ \big(\Gamma(\mc V), \alpha_{\mc V}, Q \big) \
\big\vert\ \text{$\mc V$ is a (countable) representative of $\mc U$} \big\}.
\]
We recall from Special Case~\ref{atlasgroupoid} the convention that for second-countable orbifolds we always restrict here to countable representatives of the orbifold structure. For non-second-countable, we also consider non-countable representatives.

Then Proposition~\ref{equivclassid} gives rise to the following proposition.

\begin{prop}\label{Morequivgr}
Unit Morita equivalence of marked atlas groupoids is an
equivalence relation. Further, if $\mc V$ is a (countable) representative of the orbifold structure $\mc U$ of the orbifold $(Q,\mc U)$, then the unit Morita equivalence class of $(\Gamma(\mc V), \alpha_{\mc V}, Q)$ is $\Gamma(Q,\mc U)$.
\end{prop}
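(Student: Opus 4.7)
The plan starts with the easy parts. Reflexivity and symmetry of unit Morita equivalence are immediate: the identity $\id_G$ is a weak equivalence satisfying $\alpha\circ|\id_G|\circ\alpha^{-1}=\id_X$, so $(\id_G,\id_G)$ witnesses reflexivity, and swapping the two components of a unit Morita equivalence reverses its direction. Throughout what follows I use a \emph{transport principle}: every marked atlas groupoid $(H,\gamma,Q)$ equals $(\Gamma(\mc Y),\alpha_{\mc Y},Q)$ for some orbifold atlas $\mc Y$ on $Q$. Indeed, writing $H=\Gamma(\mc Y_0)$ with $\mc Y_0=\{(Y_i,H_i,\rho_i)\}$ on some space $Q_0$, the replacement $\tilde\rho_i\sceq\gamma\circ\alpha_{\mc Y_0}^{-1}\circ\rho_i\co Y_i\to Q$ produces an orbifold atlas $\tilde{\mc Y}$ on $Q$ with $\Psi(\tilde{\mc Y})=\Psi(\mc Y_0)$; hence $\Gamma(\tilde{\mc Y})=H$ and, by the formula of Proposition~\ref{atlashomeom}, $\alpha_{\tilde{\mc Y}}=\gamma$. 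This lets me assume every marked atlas groupoid appearing below has its canonical marking.

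For transitivity, let $(\Gamma(\mc V_1),\alpha_{\mc V_1},Q)$ and $(\Gamma(\mc V_2),\alpha_{\mc V_2},Q)$ be unit Morita equivalent via $(\Gamma(\mc W),\alpha_{\mc W},Q)$ with unit weak equivalences $\psi_j\co\Gamma(\mc W)\to\Gamma(\mc V_j)$ ($j=1,2$), and likewise $(\Gamma(\mc V_2),\alpha_{\mc V_2},Q)$ and $(\Gamma(\mc V_3),\alpha_{\mc V_3},Q)$ via $(\Gamma(\mc W'),\alpha_{\mc W'},Q)$ with $\psi'_j\co\Gamma(\mc W')\to\Gamma(\mc V_j)$ ($j=2,3$), all with canonical markings. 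Proposition~\ref{charuwe}(ii) applied to each of the four unit weak equivalences forces all five atlases to represent a single orbifold structure $\mc U$ on $Q$. Construct a common refinement $\mc W''$ as follows: for each $q\in Q$, using compatibility of charts in $\mc U$, pick $(A_q,H_q,\chi_q)\in\mc U$ with $q\in\chi_q(A_q)$ together with open embeddings $\iota_q\co A_q\hookrightarrow W_{a(q)}\in\mc W$ and $\iota'_q\co A_q\hookrightarrow W'_{b(q)}\in\mc W'$, keeping the $A_q$ pairwise distinct. Then $\mc W''\sceq\{(A_q,H_q,\chi_q)\mid q\in Q\}$ is a representative of $\mc U$. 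Fix $F_2(\psi_1)=(\id_Q,\{\tilde f_i\}_i,[P,\nu])\in\Orbmap(\mc W,\mc V_1)$ as granted by Proposition~\ref{charuwe}(ii); its local lifts $\tilde f_i$ are local diffeomorphisms by Definition~\ref{liftiddef}, so the compositions $\tilde f_{a(q)}\circ\iota_q\co A_q\to V_1$ form local diffeomorphism local lifts of $\id_Q$ indexed by $Q$. Proposition~\ref{extending} then extends this family to a representative of an orbifold map, which Definition~\ref{liftiddef} identifies as a lift of $\id_{(Q,\mc U)}$ in $\Orbmap(\mc W'',\mc V_1)$, and Proposition~\ref{charuwe}(i) converts it into the desired unit weak equivalence $\Gamma(\mc W'')\to\Gamma(\mc V_1)$. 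The symmetric construction using $\iota'_q$ and $F_2(\psi'_3)$ produces $\Gamma(\mc W'')\to\Gamma(\mc V_3)$, completing transitivity.

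The second claim follows by the same machinery. For one inclusion, apply the common-refinement construction above to any representative $\mc V'$ of $\mc U$ paired with $\mc V$ to see that $(\Gamma(\mc V'),\alpha_{\mc V'},Q)$ is unit Morita equivalent to $(\Gamma(\mc V),\alpha_{\mc V},Q)$. Conversely, if $(G,\alpha,Q)$ is unit Morita equivalent to $(\Gamma(\mc V),\alpha_{\mc V},Q)$, the transport principle writes $(G,\alpha,Q)=(\Gamma(\mc V'),\alpha_{\mc V'},Q)$ and the middle term as $(\Gamma(\mc W),\alpha_{\mc W},Q)$; Proposition~\ref{charuwe}(ii) applied to the two unit weak equivalences $\Gamma(\mc W)\to\Gamma(\mc V)$ and $\Gamma(\mc W)\to\Gamma(\mc V')$ forces $\mc V'$ to represent $\mc U$, so $(G,\alpha,Q)\in\Gamma(Q,\mc U)$.

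The main obstacle will be the bookkeeping in the common-refinement step and the verification, via Propositions~\ref{induceslifts} and~\ref{extending}, that concatenating open embeddings with a given lift of identity yields a bona-fide lift of identity. Once this is handled, Proposition~\ref{charuwe} serves as the pivotal bridge between the groupoid side (unit weak equivalences) and the orbifold side (lifts of identity), so that the whole argument becomes a direct application of the machinery assembled in Sections~\ref{sec_redorbcat} and~\ref{atlascategory}.
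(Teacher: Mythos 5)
Your proposal is correct and takes essentially the same route as the paper, which derives this proposition from Proposition~\ref{equivclassid} together with the correspondence of Proposition~\ref{charuwe}; your argument simply unpacks those same ingredients (a common refinement carrying lifts of the identity, converted into unit weak equivalences via Proposition~\ref{charuwe}). The explicit ``transport principle'' reducing arbitrary markings to canonical ones is a detail the paper leaves implicit, and your verification of it is sound.
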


Equivalence of charted orbifold maps translates to marked atlas groupoids as follows.

\begin{defi}
Let $(G_1,\alpha_1,X)$, $(G_2,\alpha_2,X)$, as well as $(H_1,\beta_1,Y)$ and $(H_2,\beta_2,Y)$ be marked atlas groupoids. For $j=1,2$ let 
\[
\psi_j \co (G_j,\alpha_j, X) \to (H_j,\beta_j, Y)
\]
be a homomorphism of marked Lie groupoids.  We call $\psi_1$ and $\psi_2$ \textit{unit Morita equivalent} if there exist marked atlas groupoids $(G,\alpha,X)$ and $(H,\beta,Y)$, a homomorphism $\chi\co (G,\alpha, X) \to (H,\beta, Y)$, and unit weak equivalences $\eps_j\co (G,\alpha, X) \to (G_j,\alpha_j, X)$, $\delta_j\co (H,\beta, Y) \to (H_j,\beta_j, Y)$ such that the diagram
\[
\xymatrix{
& (G_1,\alpha_1,X) \ar[r]^{\psi_1} & (H_1,\beta_1,Y)
\\
(G,\alpha, X) \ar[ur]^{\eps_1} \ar[dr]_{\eps_2} \ar[rrr]^{\chi} &&& (H,\beta, Y) \ar[ul]_{\delta_1} \ar[dl]^{\delta_2}
\\
& (G_2,\alpha_2,X) \ar[r]^{\psi_2} & (H_2,\beta_2,Y)
}
\]
commutes.
\end{defi}

Proposition~\ref{mapwell} in terms of atlas groupoids means the following.

\begin{prop}
Unit Morita equivalence of homomorphisms between marked atlas groupoids is an
equivalence relation.
\end{prop}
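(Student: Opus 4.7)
The plan is to reduce the statement to Proposition~\ref{mapwell} via the bijection from Proposition~\ref{conclusion}. Reflexivity and symmetry of unit Morita equivalence are immediate from the definition: for reflexivity I take all of $(G,\alpha,X)$, $(H,\beta,Y)$, $\eps_j$, $\delta_j$ to be the identity data and $\chi = \psi_1$; for symmetry I swap $(\eps_1,\eps_2)$ and $(\delta_1,\delta_2)$ in the witnessing diamond. The real content is transitivity, which I would prove as follows.

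Suppose $\psi_1$ is unit Morita equivalent to $\psi_2$ and $\psi_2$ is unit Morita equivalent to $\psi_3$. By Proposition~\ref{backorb} the marked atlas groupoids appearing on the source (resp.\@ target) side of all three homomorphisms and of the two witnessing diamonds all come from orbifold atlases of a common orbifold structure $\mc U$ on $X$ (resp.\@ $\mc U'$ on $Y$). I would then apply the bijection $F_2$ from Proposition~\ref{conclusion} to every homomorphism in the two diamonds, obtaining charted orbifold maps $\hat f_j \sceq F_2(\psi_j)$ together with induced charted orbifold maps at the apices. By part~(ii) of Proposition~\ref{charuwe}, the $F_2$-image of each vertical unit weak equivalence is a lift of the identity, and by Remark~\ref{F1_equivariant} commutativity of both diamonds is preserved. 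Thus $\hat f_1 \sim \hat f_2$ and $\hat f_2 \sim \hat f_3$ in the sense of Definition~\ref{mapequiv}.

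Proposition~\ref{mapwell} then supplies an equivalence diamond witnessing $\hat f_1 \sim \hat f_3$, through some representatives $\mc W$ of $\mc U$ and $\mc W'$ of $\mc U'$, a charted orbifold map $\hat k\in\Orbmap(\mc W,\mc W')$, and lifts of the respective identities. I would finally apply the functorial bijection $F_1$ (Remark~\ref{F1_equivariant}) to this diamond and invoke part~(i) of Proposition~\ref{charuwe} to recognize the $F_1$-images of the lifts of identity as unit weak equivalences; the apex groupoid $(\Gamma(\mc W),\alpha_{\mc W},X)$ is automatically an atlas groupoid, so the resulting diamond is a bona fide unit Morita equivalence between $\psi_1$ and $\psi_3$.

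The main obstacle I expect is bookkeeping rather than mathematics. Specifically, I need to be careful that each apex groupoid produced by $F_1$ is actually an atlas groupoid (so that the vertical arrows qualify as \emph{unit} weak equivalences, not merely weak equivalences of Lie groupoids) and that the source and target marked atlas groupoids in the translated diamonds match exactly those appearing in the hypotheses of the two given unit Morita equivalences. Both points are handled by Proposition~\ref{backorb}, together with the fact that $F_1$ sends orbifold atlases to their atlas groupoids. All substantive combinatorial work --- the diamond property for lifts of identity and the construction of common refinements --- has already been done on the orbifold side in Lemma~\ref{fortrans} and Proposition~\ref{mapwell}, so no new orbifold-level arguments are required.
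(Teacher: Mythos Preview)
Your proposal is correct and follows essentially the same approach as the paper, which simply states that the result is Proposition~\ref{mapwell} translated to marked atlas groupoids; you have merely spelled out that translation in detail via $F_1$, $F_2$, Remark~\ref{F1_equivariant}, and Proposition~\ref{charuwe}. One minor citation quibble: the fact that all the source (resp.\@ target) marked atlas groupoids come from atlases of a \emph{common} orbifold structure follows from Proposition~\ref{charuwe}\eqref{charuweii} (or Proposition~\ref{Morequivgr}) applied to the unit weak equivalences in the witnessing diamonds, not from Proposition~\ref{backorb} alone, which only recovers the individual atlases.
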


We define the category $\Agr$ of marked atlas groupoids as follows: Its class of
objects consists of all $\Gamma(Q,\mathcal U)$. The morphisms from $\Gamma(Q,\mc
U)$ to $\Gamma(Q',\mc U')$ are the unit Morita equivalence classes $[\varphi]$
of homomorphisms $\varphi\co (G,\alpha, Q) \to
(G',\alpha', Q')$ where $(G,\alpha, Q)$ is any representative of
$\Gamma(Q,\mc U)$ and $(G',\alpha',Q')$ is any representative of
$\Gamma(Q',\mc U')$. 

The composition of two morphisms $[\varphi]\in \Morph\big(\Gamma(Q,\mc U),\Gamma(Q',\mc U')\big)$ and $[\psi]\in\Morph\big(\Gamma(Q',\mc U'),\Gamma(Q'',\mc U'')\big)$ is defined as follows:  Choose representatives 
\[
\varphi\co (G,\alpha,Q)\to (G',\alpha',Q')\quad\text{of}\ [\varphi]
\]
and 
\[
\psi\co(H',\beta',Q')\to (H'',\beta'',Q'')\quad \text{of}\ [\psi].
\]
Then find representatives $(K,\gamma,Q)$, $(K',\gamma',Q')$, $(K'',\gamma'',Q'')$ of the classes $\Gamma(Q,\mc U)$, $\Gamma(Q',\mc U')$, $\Gamma(Q'',\mc U'')$, resp., and unit Morita equivalences
\begin{align*}
\eps & \co (K,\gamma, Q) \to (G,\alpha, Q),
\\
\eps'_1 & \co (K',\gamma', Q') \to (G',\alpha',Q'),
\\
\eps'_2 & \co (K',\gamma',Q') \to (H',\beta',Q'),
\\
\eps'' & \co (K'',\gamma'',Q'') \to (H'',\beta'', Q''),
\end{align*}
and homomorphisms of marked Lie groupoids
\begin{align*}
 \chi & \co (K,\gamma,Q) \to (K',\gamma', Q'),
\\
\kappa & \co (K',\gamma', Q') \to (K'',\gamma'', Q'')
\end{align*}
such that the diagram
\[\def\objectstyle{\scriptstyle}
\xymatrix{
 (G,\alpha,Q) \ar[r]^\varphi & (G',\alpha',Q') & & (H',\beta',Q') \ar[r]^\psi & (H'',\beta'', Q'')
\\
(K,\gamma,Q) \ar[u]^\eps \ar[rr]^\chi && (K',\gamma', Q') \ar[ul]_{\eps'_1} \ar[ur]^{\eps'_2} \ar[rr]^\kappa && (K'',\gamma'', Q'') \ar[u]_{\eps''}
}
\]
commutes. Then the composition of $[\varphi]$ and $[\psi]$ is defined as
\[
 [\psi] \circ [\varphi] \sceq [\kappa\circ\chi].
\]

Invoking  Lemmas~\ref{onlyinduced}, \ref{compwd} and Proposition~\ref{compositiongood} we deduce the following proposition.

\begin{prop}
The composition in $\Agr$ is well-defined.
\end{prop}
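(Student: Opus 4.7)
The plan is to reduce well-definedness in $\Agr$ to the already established well-definedness of composition in $\Orbmap$ (Proposition~\ref{compositiongood}) by translating the entire defining diagram via the functors $F_1, F_2$ from Proposition~\ref{conclusion}. The key translation ingredients are:~(a) Proposition~\ref{backorb}, which ensures that the orbifold atlas $\mc V$ can be recovered from the marked atlas groupoid $(\Gamma(\mc V),\alpha_{\mc V},Q)$, so the choice of a representative of $\Gamma(Q,\mc U)$ is the same as the choice of a representative of $\mc U$;~(b) Proposition~\ref{charuwe}, which identifies unit weak equivalences with lifts of identity;~and~(c) Remark~\ref{F1_equivariant}, which says $F_1$ and $F_2$ preserve compositions.

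Concretely, I would start from the diagram in the definition of $[\psi]\circ[\varphi]$ and apply $F_2$ object-wise and arrow-wise. By~(a), each marked atlas groupoid in the diagram is of the form $(\Gamma(\mc V),\alpha_{\mc V},Q)$ for a uniquely determined orbifold atlas $\mc V$, and by~(b) the arrows $\eps, \eps'_1, \eps'_2, \eps''$ become lifts of identity $\hat\eps, \hat\eps'_1, \hat\eps'_2, \hat\eps''$ of the corresponding orbifolds. The remaining arrows $\varphi, \psi, \chi, \kappa$ become charted orbifold maps $\hat\varphi, \hat\psi, \hat\chi, \hat\kappa$, and by~(c) the commutative diagram in $\Agr$ is transported to the commutative diagram in $\Orbmap$ that defines the composition $[\hat\psi]\circ[\hat\varphi] = [\hat\kappa\circ\hat\chi]$. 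This exhibits the $\Agr$-composition as the $F_1$-image of the $\Orbmap$-composition of the $F_2$-images.

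Independence of the choices is now immediate: by Proposition~\ref{compositiongood} the class $[\hat\kappa\circ\hat\chi]$ depends neither on the representatives of $[\hat\varphi], [\hat\psi]$ nor on the choice of induced maps $(\hat\chi,\hat\kappa)$. Applying $F_1$ and using~(b) again to turn equivalence of charted orbifold maps back into unit Morita equivalence of their associated homomorphisms, we conclude that $[\kappa\circ\chi]$ in $\Agr$ is independent of the analogous choices on the groupoid side. Finally, the existence of the data needed to form the composition in $\Agr$ for arbitrary representatives $\varphi, \psi$ is the groupoid translation of Lemma~\ref{compwd}, again obtained via $F_2$.

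The only real obstacle is bookkeeping: verifying that each arrow in the $\Agr$-diagram maps under $F_2$ to exactly the type of arrow (lift of identity versus general charted orbifold map) required in the $\Orbmap$-diagram of Definition~\ref{defredorbcat}. This is precisely what Proposition~\ref{charuwe}\eqref{charuweii} delivers for the vertical arrows, while the horizontal arrows $\varphi, \psi, \chi, \kappa$ are handled by Proposition~\ref{conclusion}. With this correspondence in place, no new combinatorial work beyond the proofs of Lemmas~\ref{onlyinduced}, \ref{compwd} and Proposition~\ref{compositiongood} is needed.
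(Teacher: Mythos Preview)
Your proposal is correct and takes essentially the same approach as the paper: the paper's proof is the one-line remark that the result follows by invoking Lemmas~\ref{onlyinduced}, \ref{compwd} and Proposition~\ref{compositiongood}, relying implicitly on the translation framework set up just before via Propositions~\ref{backorb}, \ref{charuwe} and Remark~\ref{F1_equivariant}. You have simply spelled out that translation explicitly.
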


We define an
assignment $F$ from the orbifold category $\Orbmap$ to the category of marked atlas groupoids $\Agr$ as follows. On the level of objects, $F$
maps the orbifold $(Q,\mc U)$ to $\Gamma(Q,\mc U)$. Suppose that $[\hat f]$ is
a morphism from the orbifold $(Q,\mc U)$ to the orbifold $(Q',\mc U')$. Then $F$
maps $[\hat f]$ to the morphism $[F_1(\hat f)]$ from $\Gamma(Q,\mc U)$ to
$\Gamma(Q',\mc U')$.

\begin{thm}
The assignment $F$ is a covariant functor from $\Orbmap$ to $\Agr$. Even more,
$F$ is an isomorphism of categories. The functor $F$ and its inverse are constructive. 
\end{thm}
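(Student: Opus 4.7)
The plan is to build $F$ and its inverse piece by piece, verifying well-definedness, functoriality, and mutual inverseness by invoking the translation between the orbifold and groupoid sides supplied by Propositions~\ref{orb_gr}, \ref{inducedorbgr}, \ref{conclusion}, \ref{backorb}, \ref{equivclassid}, \ref{Morequivgr} and \ref{charuwe}. First I would verify that $F$ is well-defined on morphisms. If $\hat f_1\in \Orbmap(\mc V_1,\mc V'_1)$ and $\hat f_2\in \Orbmap(\mc V_2,\mc V'_2)$ satisfy $[\hat f_1]=[\hat f_2]$, pick a witnessing diagram as in Definition~\ref{mapequiv} with $\hat h\in \Orbmap(\mc W,\mc W')$ and lifts of identity $\eps_1,\eps_2,\eps'_1,\eps'_2$. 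Apply $F_1$ object by object; by Remark~\ref{F1_equivariant} the resulting diagram in $\Agr$ commutes, and by Proposition~\ref{charuwe}(\ref{charuwei}) the images of the four lifts of identity are unit weak equivalences. Hence $F_1(\hat f_1)$ and $F_1(\hat f_2)$ are unit Morita equivalent, so $[F_1(\hat f_1)]=[F_1(\hat f_2)]$.

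Next, functoriality. For identities, a lift $\eps$ of $\id_{(Q,\mc U)}$ maps under $F_1$ to a unit weak equivalence, which is by definition a representative of the identity morphism in $\Agr$ (the identity on $\Gamma(\mc V)$ itself is such a representative, and Proposition~\ref{Morequivgr} shows all unit weak equivalences from $\Gamma(Q,\mc U)$ to itself fixing the marking lie in one class). For composition, unwind Definition~\ref{defredorbcat}: given $[\hat f]$, $[\hat g]$, choose representatives and an induction diagram with charted orbifold maps $\hat h,\hat k$. Applying $F_1$ everywhere and using Remark~\ref{F1_equivariant} produces exactly the diagram defining the composition of $[F_1(\hat f)]$ and $[F_1(\hat g)]$ in $\Agr$, so $F([\hat g]\circ[\hat f]) = [F_1(\hat k)\circ F_1(\hat h)] = [F_1(\hat k\circ\hat h)] = F([\hat g])\circ F([\hat f])$.

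For the inverse functor $G\co \Agr\to\Orbmap$ I would set $G(\Gamma(Q,\mc U))\sceq (Q,\mc U)$, which is unambiguous by the combination of Propositions~\ref{backorb} and \ref{Morequivgr}: every element of $\Gamma(Q,\mc U)$ is an atlas groupoid arising from an atlas of $\mc U$, and different orbifolds yield disjoint classes. On morphisms, send $[\varphi]$ with $\varphi\co(\Gamma(\mc V),\alpha_{\mc V},Q)\to (\Gamma(\mc V'),\alpha_{\mc V'},Q')$ to $[F_2(\varphi)]$. The well-definedness argument mirrors the previous paragraph but uses Proposition~\ref{charuwe}(\ref{charuweii}): any unit weak equivalence present in a unit Morita equivalence diagram is converted by $F_2$ to a lift of identity, yielding exactly a diagram of the form in Definition~\ref{mapequiv}. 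Functoriality of $G$ follows again from Remark~\ref{F1_equivariant} together with the fact that $F_2=F_1^{-1}$ from Proposition~\ref{conclusion}.

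Finally, to see $F$ and $G$ are mutually inverse: on objects this is immediate from the definition of $\Gamma(Q,\mc U)$; on morphisms, $F_2\circ F_1 = \id$ and $F_1\circ F_2 = \id$ (Proposition~\ref{conclusion}) pass to equivalence classes since the equivalence relations on both sides were designed to correspond, giving $G\circ F = \id_{\Orbmap}$ and $F\circ G = \id_{\Agr}$. Constructivity is automatic: $F_1$ is the explicit construction in Proposition~\ref{orb_gr} and $F_2$ the explicit construction in Proposition~\ref{inducedorbgr}. The main obstacle I anticipate is purely bookkeeping, namely tracking that the diagram witnessing equivalence of charted orbifold maps maps, arrow by arrow via $F_1$, to exactly the diagram witnessing unit Morita equivalence of homomorphisms (and vice versa via $F_2$); the matching of the two equivalence relations rests entirely on Proposition~\ref{charuwe}, which is the real content of the theorem.
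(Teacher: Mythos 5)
Your proposal is correct and follows exactly the route the paper intends: the paper states the theorem without a written-out proof, deducing it from the accumulated machinery, and your argument assembles precisely those ingredients — Proposition~\ref{conclusion} and Remark~\ref{F1_equivariant} for the bijection and functoriality at the level of fixed atlases, Proposition~\ref{charuwe} to match lifts of identity with unit weak equivalences (and hence the two equivalence relations on morphisms), and Propositions~\ref{backorb} and \ref{Morequivgr} for the object-level correspondence. You also correctly identify Proposition~\ref{charuwe} as the real content making the two equivalence relations correspond.
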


In the following example we show that the representatives of orbifold maps from Example~\ref{Pfinite} define different orbifold maps. In this example we use $G(x,y)$ to denote the set of arrows $g$ of the groupoid $G$ with $s(g)=x$ and $t(g) =y$.

\begin{example}
Recall the representatives of orbifold maps 
\[
\hat f_1 = (f,\wt f, P, \nu_1)\quad\text{and}\quad \hat f_2 = (f, \wt f, P, \nu_2)
\]
from Example~\ref{Pfinite} and \ref{differenthomoms}. We claim that $\hat f_1$ and $\hat f_2$ are representatives of different orbifold maps. Assume for contradiction that $\hat f_1$ and $\hat f_2$ define the same orbifold map on $(Q,\mc U_1)$. This means that the groupoid homomorphisms $\varphi$ and $\psi$ from Example~\ref{differenthomoms} are Morita equivalent. Hence there exist marked atlas groupoids $K$ and $H$, unit weak equivalences 
\begin{align*}
 \alpha = (\alpha_0,\alpha_1) & \co K \to \Gamma,
&
\gamma = (\gamma_0,\gamma_1) & \co H \to \Gamma,
\\
\beta = (\beta_0,\beta_1) & \co K \to \Gamma,
&
\delta = (\delta_0,\delta_1) & \co H \to \Gamma,
\end{align*}
and a homomorphism $\chi= (\chi_0,\chi_1) \co K \to H$ such that the diagram
\[
\xymatrix{
& \Gamma \ar[r]^\varphi & \Gamma
\\
K \ar[ur]^{\alpha} \ar[dr]_\beta \ar[rrr]^\chi & & & H \ar[ul]_\gamma \ar[dl]^\delta
\\
& \Gamma \ar[r]^\psi & \Gamma
}
\]
commutes. Since $\alpha$ is a (unit) weak equivalence, there exists $x\in K$ and $g\in\Gamma$ with $s(g) = \alpha_0(x)$ and $t(g) = 0$. Necessarily, $g\in \{ \germ_0 (\pm\id) \}$, and hence $\alpha_0(x) = 0$. In turn, $\alpha_1$ induces a bijection between $K(x,x)$ and $\Gamma(0,0)$. Thus $K(x,x)$ consists of two elements, say $K(x,x) = \{ k_1,k_2 \}$. Let $x' \sceq \chi_0(x)$. Then 
\[
 0 = \varphi_0( \alpha_0(x) ) = \gamma_0( x' ).
\]
This shows that $\gamma_1$ induces a bijection between $H(x',x')$ and $\Gamma(0,0)$. For $j=1,2$ we have
\[
 \gamma_1(\chi_1(k_j)) = \varphi_1(\alpha_1(k_j)) = \germ_0 \id,
\]
which implies that $\chi_1(k_1) = \chi_1(k_2)$. Further $\beta_1$ induces a bijection between $K(x,x)$ and $\Gamma(\beta_0(x),\beta_0(x))$. 
Hence $\beta_0(x) = 0$, and thus
\[
 \psi_1(\beta_1(k_1)) \not= \psi_1(\beta_1(k_2)).
\]
But this contradicts to
\[
 \psi_1(\beta_1(k_1)) =  \delta_1(\chi_1(k_1)) = \delta_1(\chi_1(k_2))= \psi_1(\beta_1(k_2)).
\]
In turn, $\varphi$ and $\psi$ are not Morita equivalent.
\end{example}

\section{Extension to marked proper effective \'etale Lie groupoids}\label{extension}

Let $\Pgr$ denote the category of marked proper effective \'etale Lie groupoids which is defined analogously to the category $\Agr$ of marked atlas groupoids with the only difference that any marked atlas groupoid may be a marked proper effective \'etale Lie groupoid.
Note that the objects in $\Pgr$ are not the single marked proper effective \'etale Lie groupoids themselves but the unit Morita equivalence classes of these.

In this final section we show that these two categories are isomorphic via the canonical embedding. As before we consider two different flavors of categories distinguished by whether the involved topological spaces are required to be just paracompact or even second-countable. These differences do not cause any distinctions in statements or proofs and therefore will be implicit.

\begin{thm}
The embedding functor $\Agr \to \Pgr$ is an isomorphism.
\end{thm}

\begin{proof}
We prove that we can lift any given homomorphism (weak equivalence) between two proper effective \'etale Lie groupoids to a homomorphism (weak equivalence) between weakly equivalent atlas groupoids:
\[
\xymatrix{
& G \ar[r] & G'
\\
\Gamma(\mc U) \ar@{-->}[ur] \ar@{-->}[rrr] &&& \Gamma(\mc U'). \ar@{-->}[ul]
}
\]
Let $G$ be a proper effective \'etale Lie groupoid. By \cite[Corollary 5.31]{Moerdijk_Mrcun} there exists an orbifold atlas $\mc U$ on the base space $G_0$ of $G$ such that $\Gamma(\mc U)$ is weakly equivalent to $G$. More precisely, let $(U_i)_{i\in I}$ be a covering of $G_0$ by open connected subsets such that for each $i\in I$, there exists a point $x\in U_i$ such that 
\[
 G_i \sceq G_x \sceq \{ g\in G_1 \mid \stackrel{g}{x\longrightarrow x} \} = G_1\vert_{U_i}
\]
and $U_i$ is $G_i$-stable, and such that the action groupoid $G_i \ltimes U_i$ is isomorphic to $G\vert_{U_i}$. Then the family
\[
 \mc U \sceq \{ (U_i, G_i, \pr) \mid i\in I\}
\]
is an orbifold atlas on $G_0$. We identify $U_i$ with $\{i\}\times U_i$. Let $H\sceq \Gamma(\mc U)$. Then the weak equivalence $\varphi\colon H\to G$ is given by
\begin{align*}
 \varphi_0&\colon H_0 \to G_0,\quad (i,x) \mapsto x
\intertext{and}
\varphi_1 &\colon H_1 \to G_1,\quad \stackrel{k\quad}{(i,x)\longrightarrow (j,y)\quad } \mapsto \stackrel{\quad k}{\quad x\longrightarrow y}.
\end{align*}
Now let $G, G'$ be two proper effective \'etale Lie groupoids and $\chi\colon G\to G'$ be a homomorphism (weak equivalence). Choose orbifold atlases (in the way as above)
\[
 \mc U \sceq \{ (U_i, G_i, \pr) \mid i\in I \}, \quad \mc U' \sceq \{ (U'_j, G'_j, \pr) \mid j\in J \}
\]
on $G_0$ resp.\@ $G'_0$ such that there exists a map $\beta\colon I\to J$ such that 
\[
 \chi_0(U_i) \subseteq U'_{\beta(i)}
\]
for all $i\in I$ (this is possible). Set $H\sceq \Gamma(\mc U)$, $H'\sceq \Gamma(\mc U')$ and define $\alpha\colon H \to H'$ via
\begin{align*}
 \alpha_0 & \colon H_0 \to H'_0,\quad (i,x) \mapsto (\beta(i), \chi_0(x))
\\
\alpha_1 & \colon H_1 \to H'_1,\quad \stackrel{k\quad}{(i,x) \longrightarrow (j,y)\quad} \mapsto \stackrel{\quad\chi_1(k)}{\quad(\beta(i),\chi_0(x)) \longrightarrow (\beta(j),\chi_0(y))}.
\end{align*}
One easily checks that $\alpha$ is a homomorphism (weak equivalence). 

With this lifting property we can bring back all equivalence relations between marked proper effective \'etale Lie groupoids and between maps between these groupoids to statements purely between marked atlas groupoids and maps between those. This proves the theorem.
\end{proof}

\bibliographystyle{amsplain}

\providecommand{\bysame}{\leavevmode\hbox to3em{\hrulefill}\thinspace}
\providecommand{\MR}{\relax\ifhmode\unskip\space\fi MR }
\providecommand{\MRhref}[2]{%
  \href{http://www.ams.org/mathscinet-getitem?mr=#1}{#2}
}
\providecommand{\href}[2]{#2}

\end{document}